\newtheorem{thm}{Theorem}
\newtheorem{assumption}[thm]{Assumption}
\newtheorem{lem}[thm]{Lemma}
\newtheorem{cor}[thm]{Corollary}
\newtheorem{defi}[thm]{Definition}
\newtheorem{prop}[thm]{Proposition}
\newtheorem{rk}[thm]{Remark}
\newenvironment{preuve}{\vip \noindent {\it Proof}}{\hfill$\square$\vip}
\newcommand{\field}[1]{\mathbb{#1}}
\newcommand{\EE}{\field{E}}
\newcommand{\E}{\field{E}}
\newcommand{\PP}{\field{P}}
\newcommand{\RR}{\field{R}}
\newcommand{\R}{\field{R}}
\newcommand{\vip}{\vskip.2cm}
\begin{document}

\date{\today}

\title[Parametric estimation for interacting particle systems]{The LAN property for McKean-Vlasov models in a mean-field regime}

\author{Laetitia Della Maestra and Marc Hoffmann}

\address{Laetitia Della Maestra, Universit\'e Paris-Dauphine \& PSL, CNRS, CEREMADE, 75016 Paris, France}
\email{dellamaestra@ceremade.dauphine.fr}
\address{Marc Hoffmann, Universit\'e Paris-Dauphine \& PSL, CNRS, CEREMADE, 75016 Paris, France}
\email{hoffmann@ceremade.dauphine.fr}

\begin{abstract} 
We establish the local asymptotic normality (LAN) property for estimating a multidimensional parameter in the drift of a system of $N$ interacting particles observed over a fixed time horizon in a mean-field regime $N \rightarrow \infty$. By implementing the classical theory of Ibragimov and Hasminski, we obtain in particular sharp results for the maximum likelihood estimator that go beyond its simple asymptotic normality thanks to H\'ajek's convolution theorem and strong controls of the likelihood process that yield asymptotic minimax optimality (up to constants). Our structural results shed some light to the accompanying nonlinear McKean-Vlasov experiment, and enable us to derive simple and explicit criteria to obtain identifiability and non-degeneracy of the Fisher information matrix. These conditions are also of interest for other recent studies on the topic of parametric inference for interacting diffusions. 
\end{abstract}

\maketitle

\noindent \textbf{Mathematics Subject Classification (2010)}: 62C20, 62F12, 62F99, 62M99
.

\noindent \textbf{Keywords}: Parametric estimation; LAN property; maximum likelihood estimation; statistics and PDE; interacting particle systems; McKean-Vlasov models.

\tableofcontents

\section{Introduction}
 

\subsection{Motivation}

Collective dynamics models are becoming increasingly popular in modelling complex stochastic systems, with a versatiliy of applications, ranging from mathematical biology (neurosciences, Baladron {\it et al.} \cite{Baladron}, structured models in population dynamics, Mogilner {\it et al.} \cite{mogilner99}, Burger {\it et al.} \cite{BURGER}) to social sciences (opinion dynamics, Chazelle {\it et al.} \cite{CHAZELLE}, cooperative behaviours, Canuto {\it et al.} \cite{canuto12}) and finance (systemic risk, Fouque and Sun \cite{fouque2013systemic}), or more recently, mean-field games (Cardaliguet {\it et al.} \cite{CARDA}, Cardaliaguet and Lehalle \cite{LC19}). Whereas stochastic systems of interacting particles and associated nonlinear Markov processes in the sense of McKean \cite{MCKEAN} date back to the 1960's and have been studied extensively over more than half a century, see {\it e.g.} \cite{bolley2007quantitative, Sznitman1, Sznitman2, meleard1996asymptotic,  TANAKAHITSUDA} among a myriad of references, the development of statistical inference in this setting is only emerging, (with some notable exceptions like L\"ocherbach \cite{locherbach2002lan} in large time or {\it e.g.} Kasonga \cite{kasonga1990maximum} or Bishwal \cite{bishwal2011estimation}) in a mean-field limit. Recently, Giesecke {\it et al.}  \cite{giesecke2020inference} and Sharrock, Kantas, Parpas and Grigorios \cite{sharrock2021parameter} revisit the work of Kasonga and consider a parametric framework where  convergent and asymptotically normal contrast estimators are constructed.  Several other parametric frameworks (that consider various observation schemes and asymptotic frameworks) have also been recently considered, like \cite{chen2020maximum, liu2020parameter, wen2016maximum} or Genon-Catalot and Laredo \cite{genon2021parametric, genon2021probabilistic}. There also exist recent results in nonparametric inference: we mention our work \cite{della2022nonparametric} and Belometsny {\it et al.} \cite{belomestny2021semiparametric}, together with studies in identification like \cite{lang2021identifiability,  li2020coercivity, li2020identifiability} or  learning \cite{ lang2020learning, lu2020learning, lu2021learning}.\\

The present paper, close in spirit to \cite{bishwal2011estimation, giesecke2020inference, kasonga1990maximum} and \cite{sharrock2021parameter} (in their so-called offline case) considers a parametric framework in a mean-field regime over a fixed time horizon.  We take a deeper look at the asymptotic structure of the associated statistical experiment, in the sense of local asymptotic normality or LAN, in order to derive strong results for the maximum likelihood, both in asymptotic distribution and in an asymptotic minimax sense (up to constants) for various loss functions. For simplicity, we keep-up with continuous observations, but we briefly explain how to move to a discrete data setting. Also, we look for simple and explicit criteria that enable us to verify identifiability and non-degeneracy of the model. This is a non-trivial issue in the context of nonlinear McKean-Vlasov models that is usually a bit overlooked in the literature.

\subsection{Setting}
 We have 
a parameter of interest $\vartheta $ lying in a compact set $\Theta \subset \R^p$ (with non empty interior), for some fixed $p \geq 1$. For some fixed time horizon $T>0$, we continuously observe a stochastic system of $N$ interacting particles 
\begin{equation} \label{eq: data}
X^{(N)} = (X_t^1,\ldots, X_t^{N})_{t \in [0,T]},
\end{equation}
evolving in an Euclidean ambient space $\R^d$, that solves
\begin{equation} \label{eq:diff basique}
\left\{
\begin{array}{l}
dX_t^{i} = b(\vartheta ; t, X_t^{i},\mu^{(N)}_t)dt+\sigma(t,X_t^{i}) dB_t^i,\;\;1 \leq i \leq N,\; t \in [0,T],\\  
\mathcal L(X_0^{1},\ldots, X_0^{N}) = \mu_0^{\otimes N},
\end{array}
\right.
\end{equation}
where 
$\mu^{(N)}_t= N^{-1} \sum_{i = 1}^N \delta_{X_t^{i}}$ is the empirical measure of the system.
The $(B_t^i)_{t \in [0,T]}$
are independent $\R^d$-valued Brownian motions. The initial condition $\mu_0$, the drift $b$ and the diffusion coefficient 
$\sigma$ are at least sufficiently regular 
so that 
$$\mu^{(N)} = (\mu^{(N)}_t)_{t \in [0,T]} \rightarrow \mu = (\mu_t^{})_{t \in [0,T]}$$ weakly as $N \rightarrow \infty$,
where $\mu$ is a family of probability measures that solves (in a weak sense) the parabolic nonlinear equation
\begin{equation}  \label{eq: mckv approx}
\left\{ 
\begin{array}{ll}
\partial_{t}\mu + \mathrm{div}\big( b(\vartheta ; \cdot,\mu) \mu\big) = \tfrac{1}{2}\sum_{k,k'=1}^d\partial_{kk'}^2\big(c_{kk'}\mu\big),\; t \in [0,T],\\ 
 \mu_{t=0}^{} = \mu_0,
\end{array}
\right. 
\end{equation} 
with $c = \sigma \sigma^{\top}$.
We will write $\mu^{\vartheta} = (\mu^{\vartheta}_t)_{t \in [0,T]} $ to emphasise the dependence in $\vartheta$. In this context, we are interested in estimating from data \eqref{eq: data} the parameter $\vartheta \in \Theta $ of the function $(\vartheta ;  t, x,\nu) \mapsto b(\vartheta; t, x,\nu) \in \R^d$. Asymptotics are taken as  $N \rightarrow \infty$.\\

A particular case of interest that covers many examples is when the dependence in the measure variable for $b$ is linear: we then have
\begin{equation} \label{eq: explain drift}
b(\vartheta ; t,X_t^i,\mu^{(N)}_t) = \int_{\R^d}\widetilde b(\vartheta; X_t^i,y)\mu_t^{(N)}(dy) = N^{-1}\sum_{j = 1}^N \widetilde b(\vartheta; X_t^i,X_t^j),
\end{equation}
for some function $\widetilde b: \Theta \times \R^{d} \times \R^d \rightarrow \R^d$. A typical form is $\widetilde b(\vartheta; t,x,y) = G_\vartheta(x)+F_\vartheta(x-y)$ where $G_{\vartheta}, F_{\vartheta}:\R^d \rightarrow \R^d$ play the role of a common external force to the system and an interaction force respectively.

\subsection{Results and organisation of the paper}
In Section \ref{section : organisation}, we rigorously construct the (sequence of) statistical experiment(s)  generated by the observation \eqref{eq: data} under the dynamics \eqref{eq:diff basique} that we denote $(\mathcal E^N)_{N \geq 1}$. It is well defined and regular in the classical sense of Ibragimov and Hasminski \cite{ibragimov2013statistical} under strong integrability of the initial condition $\mu_0$ and standard smoothness assumptions on the drift $\vartheta \mapsto b(\vartheta;\cdot)$ and the diffusion matrix $c = \sigma \sigma^\top$, see Assumptions \ref{ass: init condition}, \ref{reg sigma}, \ref{reg b} and \ref{hyp : reg sigma b} and Proposition \ref{prop: regularity model}. The deep study of the identifiability of $\mathcal E^N$ and the non-degeneracy of its information matrix $\mathbb I_{\mathcal E^N}(\vartheta)$ is simplified via the accompanying experiment $\mathcal G^{\otimes N}$, where $\mathcal G$ is generated by the continuous observation of a solution to the McKean-Vlasov equation
$$\left\{
\begin{array}{l}
dX_t = b(\vartheta;t,X_t,\mu_t^\vartheta) dt + \sigma(t,X_t)dB_t,\; t \in [0,T],\\  
\mathcal L(X_0) = \mu_0,
\end{array}
\right.
$$
for a standard Brownian motion $(B_t)_{t \in [0,T]}$ on $\R^d$ and where $\mu_t^\vartheta$ is the marginal distribution of the solution at time $t$. In particular, in the case of representation \ref{eq: explain drift} we have that $\mathcal E^N$ and $\mathcal G^{\otimes N}$ do not separate asymptotically  by a simple entropy argument, see Proposition \ref{prop: indisting}, and we always have the convergence of the corresponding Fisher information matrices:
$$N^{-1}\mathbb I_{\mathcal E^N}(\vartheta) \rightarrow \mathbb I_{\mathcal G}(\vartheta)$$
in a mean-field limit $N \rightarrow \infty$, as established in  Proposition \ref{prop: fisher cont}. This approximation is the gateway to obtain explicit identifiability and non-degeneracy criteria, as detailed in Section \ref{sec: identif and non-deg}. In particular, under additional regularity assumptions, we obtain a quite simple criterion for $\mathbb I_\mathcal G(\vartheta)$ to be non-degenerate in Proposition \ref{prop: criterion fisher}, namely the property that one of the functions
\begin{equation} \label{eq: crit informal}
x \mapsto  \nabla_{\vartheta}(c^{-1/2}b)^j(\vartheta;0,x,\mu_0)^{\top} z,\;\;\;j = 1,\ldots, d
\end{equation}
is not identically vanishing, for every $z \in \R^p$ with $|z|=1$, with $c^{-1/2}$ a square root of $c = \sigma \sigma^\top$. We use the notation $f = (f^j)_{1 \leq j \leq d}$ componentwise, the $f^j$ being real-valued functions. In particular, \eqref{eq: crit informal} has the advantage to only relate to the initial condition $\mu_0$ in the measure argument and not the whole $(\mu_t^\vartheta)_{t \in [0,T]}$ which is (almost) never explicit. Having a simple criterion to achieve the non-degeneracy of the Fisher information seems to have been a bit overlooked in the literature (where it is usually simply assumed to hold true) and our result is thus of interest for other studies.\\

In Section \ref{sec: main}, we state the main results of the paper, Theorem \ref{thm: LAN prop}, where we establish the LAN property: if we reparametrise the experiments via $\vartheta = \vartheta_0 + N^{-1/2}u$ locally around a fixed point $\vartheta_0$, with $u \in \R^p$ being now the unknown parameter, then both $\mathcal E^N$ and $\mathcal G^{\otimes N}$ look like a Gaussian shift: we observe 
$$Y^N = u + \mathbb I_{\mathcal G}(\vartheta_0)^{-1/2}\xi,$$
where $\xi$ is a standard Gaussian random vector in $\R^p$. This has important consequences in terms of existence and properties of optimal procedures: we have H\'ajek's convolution theorem (Corollary \ref{cor strong LAN}), namely for any estimator $\widehat \vartheta_N$,
\begin{equation} \label{eq: informal hajek}
\liminf_{N \rightarrow \infty}\sup_{|\vartheta'-\vartheta| \leq \delta} \E_{\PP_{\vartheta'}^N}\big[w\big(N^{1/2}\mathbb I_{\mathcal G}(\vartheta)^{1/2}(\widehat \vartheta_N-\vartheta')\big)\big] \geq  \E[w(\xi)],
\end{equation}
for small enough $\delta>0$, where $\PP_{\vartheta'}^N$ is the distribution of the data when the parameter is $\vartheta'$ and  $w$ is an arbitrary loss function satisfying some regularity properties. The bound \eqref{eq: informal hajek} is achieved by the maximum likelihood estimator $\widehat \vartheta_N^{\,\tt{mle}}$ obtained by maximising the contrast
\begin{equation} \label{eq: max constrast}
\vartheta \mapsto \ell^N(\vartheta; X^{(N)}) = \sum_{i=1}^N  \int_0^T  \Big( (c^{-1}b)(\vartheta; t,X^{i}_t,\mu^{(N)}_t)^{\top} dX^i_t
-\tfrac{1}{2} |(c^{-1/2}b)(\vartheta; t,X^{i}_t,\mu^{(N)}_t)|^2 dt\Big).
\end{equation}
This implies in particular the convergence
\begin{equation} \label{eq: asymp classical}
\sqrt{N}\big(\widehat \vartheta_N^{\,\tt{mle}}-\vartheta\big) \rightarrow \mathcal N\big(0, \mathbb I_{\mathcal G}(\vartheta)^{-1}\big)
\end{equation}
in distribution.
Moreover, we have in Theorem \ref{thm: MLE prop} the minimax asymptotic optimality of $\widehat \vartheta_N^{\,\tt{mle}}$, in the sense that
$$\mathcal R_w^N(\widehat \vartheta_N^{\,\tt{mle}}; \Theta)= \inf_{\widehat \vartheta_N}\mathcal R_w^N(\widehat \vartheta_N; \Theta)(1+o(1))$$
where $\mathcal R_w^N(\widehat \vartheta_N; \Theta) = \sup_{\vartheta \in \Theta}\E_{\PP_\vartheta^N}[w(N^{1/2}\mathbb I_{\mathcal G}(\vartheta)^{1/2}(\widehat \vartheta_N-\vartheta))]$ is the classical minimax risk. Thus the LAN property enables us to obtain considerably stronger results than simply \eqref{eq: asymp classical}. In Section \ref{sec: examples}, we investigate several non-trivial examples that generalise the results of \cite{kasonga1990maximum}, and where our identifiability and non-degeneracy criteria easily apply. We treat in particular the case of a kinetic mean-field double layer potential that may serve as a representative model  for swarming models, see in particular \cite{bolley2011stochastic} and the references therein. The proofs are delayed until Sections \ref{section : proofs} and \ref{sec: remaining proofs}, with an appendix (Section \ref{sec: appendix}) that contains useful technical results.\\

In practice, maximising the function \eqref{eq: max constrast} is not feasible, since only discrete data are available. It is then reasonable to replace the ideal observation \eqref{eq: data} by the more realistic
$$
X^{(N, m)} = \big(X_{t}^1,\ldots, X_{t}^{N}\big)_{t \in \{t_0^m, \ldots, t_m^m\}},
$$
where $(0 = t_0^m < t_1^m < \ldots < t_m^m =T)$ is a subdivision of $[0,T]$ with mesh 
$$\max_{1 \leq j \leq m}(t_j^m-t_{j-1}^m) \leq Cm^{-1}.$$
We thus have $(m+1) \times N$ data with values in $\R^d$. We may then replace \eqref{eq: max constrast} by
\begin{align*}
\vartheta & \mapsto N^{-1}\sum_{i=1}^N\sum_{j =0}^m\Big((c^{-1}b)(\vartheta; t_j^m,X^{i}_{t_{j-1}^m},\mu^{(N)}_{t_{j-1}^m})^{\top} (X^i_{t_j^m}-X^i_{t_{j-1}^m}) \\
&-\tfrac{1}{2}|(c^{-1/2}b)(\vartheta; t_{j-1}^m,X^{i}_{t_{j-1}^m},\mu^{(N)}_{t_{j-1}^m})|^2 (t_j^m-t_{j-1}^m)\Big).
\end{align*}
Assuming the function $(t, x) \mapsto (c^{-1/2}b)(\vartheta; t,x,\mu_{t}^{(N)})$ to be smooth, we may safely expect the discrete approximation to be close to its continuous counterpart up to an additional error of order $m^{-1/2}$, by standard high-frequency discretisation techniques, see the textbooks of Jacod and co-authors \cite{ait2014high, jacod2011discretization, JS}. In particular, if $m \gg N$, the same results as for continuous observations are likely to hold true.\\

\section{Construction and properties of the statistical model}\label{section : organisation}
\subsection{Notation}
The dimension $d \geq 1$ of the state space $\R^d$ and the dimension $p \geq 1$ of the parameter space $\Theta$ as well as the time horizon $T>0$ are fixed once for all. 
We write $|\cdot |$ for the Euclidean distance on $\R^q$  ($q = p,d$ or any other integer, depending on the context) or for a matrix norm on $\R^p \otimes \R^p$ fixed 
throughout.\\


We consider functions that are mappings defined on products of metric spaces (typically $\Theta \times [0,T] \times \R^d \times \mathcal P_1$ or subsets of these) with values in $\R$ or $\R^d$. Here, $\mathcal P_1$ denotes the set of probability measures on $\mathbb{R}^{d}$ with a first moment, endowed with the Wasserstein $1$-metric
$$\mathcal W_1(\mu,\nu) = \inf_{m \in \Gamma(\mu,\nu)} \int_{\R^d \times \R^d} \big|x-y\big|m(dx,dy) = \sup_{|\phi|_{\mathrm{Lip}}\leq 1}\int_{\R^d}\phi\, d\big(\mu-\nu\big),$$
where $\Gamma(\mu, \nu)$ denotes the set of probability measures on the product space $\R^d \times \R^d$ with marginals $\mu$ and $\nu$. For a probability measure $\mu$ on $\R^d$, we also set
$$\mathfrak m_r(\mu) = \int_{\R^d} |y|^r \mu(dy)$$
for its moment of order $r \geq 1$ and we say that $\mu \in \mathcal{P}_r$ if $\mathfrak m_r(\mu)$ is finite.
All the functions in the paper are implicitly measurable with respect to the Borel-sigma field induced by the product topology.
A $\R^d$-valued function $f$ is written componentwise as $f = (f^k)_{1 \leq k \leq d}$ where the $f^k$ are real-valued. 
We denote by $\partial_{\vartheta_k}$, $\nabla_{\vartheta}$, $\partial^2_{\vartheta_k \vartheta_l}$ respectively the partial derivative of a function with respect to the $k$-th component $\vartheta_k$, the gradient of a real-valued function with respect to $\vartheta$, the second order partial derivative of a function with respect to the $k$-th and $l$-th components $\vartheta_k , \vartheta_l$.\\ 

Finally, we repeatedly use the notation $C$ for a positive number that does not depend on $N$, nor $\vartheta$, that may vary from line to line and that we call a constant, although it usually depends on some other (fixed) quantities of the model. In most cases, it is explicitly computable. 

\subsection{Model assumptions} \label{section : assumptions}

\subsubsection*{Well-posedness of the model and its associated statistical experiment}

We work under the following strong integrability property for the initial condition $\mu_0$.

\begin{assumption} \label{ass: init condition}
For every $r \geq 1$, we have $\mu_0 \in \mathcal P_r$.
\end{assumption}

 As for the diffusion matrix 
$\sigma: [0,T] \times \R^d \rightarrow \R^d \otimes \R^d$, we make the following strong ellipticity and Lipschitz smoothness assumption. 

\begin{assumption} \label{reg sigma} The diffusion matrix $\sigma$ is measurable and for some $C\geq 0$, we have
$$|\sigma(t,x')-\sigma(t,x)| \leq C|x'-x|.$$
Moreover, $c = \sigma \sigma^{\top}$ is such that $\sigma_-^2 |y|^2 \leq (c(t,x)y)^\top y  \leq \sigma_+^2|y|^2$ for some $\sigma_\pm >0$.
\end{assumption}
As for the drift part $b: \Theta  \times [0,T]\times \R^d \times \mathcal P_1 \rightarrow \R^d$, we work under usual Lipschitz smoothness assumptions. 
\begin{assumption} \label{reg b}
The drift b is measurable and for some $C \geq 0$, we have
$$
\sup_{t \in [0,T], \vartheta \in \Theta}\big|b(\vartheta; t, x',\nu')-b(\vartheta; t,x,\nu)\big| \leq C\big(|x'-x|+\mathcal W_1(\nu',\nu)\big),
$$
and there exists some $\vartheta_0 \in \Theta$ such that
$$b_0 =\sup_{t \in [0,T]} | b(\vartheta_0 ; t, 0, \delta_0)| < \infty \; .$$
\end{assumption}
We let $|b|_{\mathrm{Lip}}$ denote the smallest $C\geq 0$ for which Assumption \ref{reg b}  holds.\\ 

Assumptions \ref{ass: init condition}, \ref{reg sigma}, \ref{reg b} together are sufficient to guarantee the well-posedness of the statistical model: there exists a unique weak solution to \eqref{eq:diff basique} for every $\vartheta \in \Theta$ hence the data $X^{(N)}$ of \eqref{eq: data} is well-defined. More precisely, we let $\mathcal C^N = \mathcal C([0,T],(\R^d)^N)$ denote the space of continuous functions on $(\R^d)^N$, equipped with the natural filtration $(\mathcal F_t)_{t \in [0,T]}$  induced by the canonical mappings  
$$X_t^{(N)}(\omega) = \big(X^1_t(\omega),\ldots ,X^N_t(\omega)\big)= \omega_t.$$ 
For $\mu_0 \in \mathcal P_1$ and $\vartheta \in \Theta$, the probability $\PP^{N}_\vartheta$ on $(\mathcal C^N,\mathcal F^N)$ under which  the canonical process $X^{(N)} = (X_t^{(N)})_{t \in [0,T]}$ is a solution of \eqref{eq:diff basique} for the initial condition $\mu_0^{\otimes N}$ is uniquely defined under Assumptions \ref{ass: init condition}, \ref{reg sigma} and \ref{reg b}. Recommended reference (that covers our set of assumptions) is the textbook by Carmona and Delarue \cite{carmona2018probabilistic} or the lectures notes of Lacker  \cite{lacker2018mean}. Moreover, for every $\vartheta \in \Theta$, the parabolic nonlinear equation \eqref{eq: mckv approx} has a unique probability solution $\mu = (\mu_t^\vartheta)_{t \in [0,T]}$ and  we have the weak convergence $\mu_t^{(N)} \rightarrow \mu_t^\vartheta$ under $\PP_\vartheta^N$, for every $\vartheta \in \Theta$.\\

We thus study under Assumptions \ref{ass: init condition}, \ref{reg sigma}, \ref{reg b} the (sequence of) statistical experiment(s) generated by the observation \eqref{eq: data} under the dynamics \eqref{eq:diff basique} and that we realise as  
$$(\mathcal E^N)_{N \geq 1} = \Big(\mathcal C^N, \mathcal F^N, \big(\PP_\vartheta^N, \vartheta \in \Theta\big)\Big)_{N \geq 1}.$$

Note that at that stage, we do not impose any identifiability assumption {\it i.e.} we do not assume that the mapping $\vartheta \mapsto \PP_\vartheta^N$ is one-to-one. We will discuss that matter together with the non-degeneracy of the model later in Section \ref{sec: identif and non-deg}.
%
%

\subsubsection*{Regularity of the experiment $\mathcal E^N$} 
In order to study the regularity of the model, we need specific smoothness properties for the function $\vartheta \mapsto b(\vartheta, \cdot)$.
\begin{assumption} \label{hyp : reg sigma b}  There exist $r_1,r_2 \geq 1$ and $C>0$ such that for every point $\vartheta $  in the interior of $\Theta$, the function
$\vartheta \mapsto b(\vartheta; t,x,\nu) $ is twice differentiable and for every $1 \leq \ell,\ell' \leq p$,
$$\sup_{t \in [0,T]}(|\partial_{\vartheta_\ell}b(\vartheta;t,x,\nu)| + |\partial_{\vartheta_\ell \vartheta_{\ell'}}^2b(\vartheta;t,x,\nu)|)  \leq C(1 + |x|^{r_1} + \mathfrak m_{r_2}(\nu)),$$
$$\sup_{t \in [0,T]}|\partial_{\vartheta_\ell}b(\vartheta;t,x',\nu')- \partial_{\vartheta_\ell}b(\vartheta;t,x,\nu)| \leq C (|x'-x| + \mathcal W_1(\nu',\nu) ).$$
\end{assumption}
The smoothness properties of the map $\vartheta \mapsto b(\vartheta; \cdot)$ granted by Assumption \ref{hyp : reg sigma b} enables us to explore further the regularity of the experiment $\mathcal E^N$. First, note that we have a log-likelihood by setting
\begin{equation}\label{log-likelihood Girsanov}
\ell^N(\vartheta; X^{(N)}) = \sum_{i=1}^N  \int_0^T  (c^{-1}b)(\vartheta; t,X^{i}_t,\mu^{(N)}_t)^{\top} dX^i_t
-\frac{1}{2} \sum_{i=1}^N \int_0^T |(c^{-1/2}b)(\vartheta; t,X^{i}_t,\mu^{(N)}_t)|^2 dt,
\end{equation}
where $c^{-1/2}$ is fixed once for all. Indeed, by Girsanov's theorem again, the laws $\PP_\vartheta^N$ are all absolutely continuous w.r.t. $\mathbb W^N$, defined as the unique probability on $(\mathcal C^N,\mathcal F^N)$ under which the processes 
$$\Big(\int_0^t c^{-1/2}(s,X_s^i)dX_s^i\Big)_{t \in [0,T]},\;\;1 \leq i \leq N$$ are independent standard Brownian motions on $\R^d$, together with $\mathcal L(X_0^1,\ldots, X_0^N) = \mu_0^{\otimes N}$. In turn, for every $\vartheta \in \Theta$,
$$\frac{d\PP_{\vartheta}^N}{d\mathbb W^N}(X^{(N)}) = \exp\big(\ell^N(\vartheta; X^{(N)})\big)$$
holds 
$\mathbb W^N$-almost-surely.  We further write $\mathcal L^N(\vartheta; X^{(N)}) =  \exp\big(\ell^N(\vartheta; X^{(N)})\big)$ for the likelihood process, indexed by the parameter $\vartheta \in \Theta$. We recall one possible classical definition of a regular statistical experiment, following \cite{ibragimov2013statistical}.
\begin{defi} \label{def: regular}
The dominated (sequence of) experiment(s) $(\mathcal{E}^N)_{N \geq 1}$ is regular if 
\begin{itemize}
\item[(i)] $\vartheta \mapsto \mathcal L^N(\vartheta;X^{(N)})$ is differentiable for every $\vartheta$ in (the interior of) $\Theta$, $\mathbb W^N$-almost surely,
\item[(ii)] $\vartheta \mapsto \nabla_{\vartheta} \mathcal L^N(\vartheta; X^{(N)})$ is continuous in quadratic $\mathbb W^N$-mean, for every $\vartheta$ in (the interior of) $\Theta$,
\item[(iii)] we have finite Fisher information 
$$
\EE_{\PP_{\vartheta}^{N}} \big[|\nabla_{\vartheta} \ell^N(\vartheta;X^{(N)}) |^2\big]< \infty$$ 
for  every $\vartheta$ in (the interior of) $\Theta$.
\end{itemize}
\end{defi}
\begin{prop} \label{prop: regularity model}
Under Assumptions \ref{ass: init condition}, \ref{reg sigma}, \ref{reg b} and \ref{hyp : reg sigma b} the (sequence of) experiment(s) $(\mathcal E^N)_{N \geq 1}$ is regular.
\end{prop}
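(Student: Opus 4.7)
The plan is to verify the three conditions (i)--(iii) of Definition \ref{def: regular} separately, with the starting point being the formal computation of the gradient. Since $c$ does not depend on $\vartheta$, differentiating \eqref{log-likelihood Girsanov} and using $\nabla_{\vartheta}|c^{-1/2}b|^2 = 2(\nabla_{\vartheta}b)^{\top}c^{-1}b$ yields the candidate expression
\begin{equation*}
\nabla_{\vartheta}\ell^N(\vartheta;X^{(N)}) = \sum_{i=1}^N\int_0^T\nabla_{\vartheta}b(\vartheta;t,X_t^i,\mu_t^{(N)})^{\top}c^{-1}(t,X_t^i)\bigl[dX_t^i - b(\vartheta;t,X_t^i,\mu_t^{(N)})\,dt\bigr].
\end{equation*}
This is the object to be justified in (i), controlled under $\PP_\vartheta^N$ in (iii), and upgraded to an $L^2(\mathbb W^N)$--continuous map in (ii).

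For (i), differentiation under the ordinary integral follows from dominated convergence: the polynomial bound on $\partial_{\vartheta_\ell}b, \partial^2_{\vartheta_\ell\vartheta_{\ell'}}b$ in Assumption \ref{hyp : reg sigma b} combined with the $\mathbb W^N$-a.s. continuity of $t\mapsto X_t^i$ and $t\mapsto \mu_t^{(N)}$ (in $\mathcal W_1$) gives a locally uniform dominating function. Differentiation under the stochastic integral relies on the $L^2(dt\otimes d\mathbb W^N)$-continuity of $\vartheta\mapsto \nabla_{\vartheta}(c^{-1}b)(\vartheta;\cdot)$ along trajectories, which by Itô's isometry transfers to $L^2(\mathbb W^N)$-convergence of difference quotients of the stochastic integral (a standard parameter-dependent Itô integral argument, see e.g.\ Protter); the chain rule then propagates the differentiability to $\mathcal L^N(\vartheta)=\exp(\ell^N(\vartheta))$. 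For (iii), I substitute $dX_t^i = b(\vartheta;\cdot)\,dt+\sigma(t,X_t^i)\,dB_t^i$ under $\PP_\vartheta^N$ directly in the bracket above: the Lebesgue terms cancel exactly and one is left with
\begin{equation*}
\nabla_{\vartheta}\ell^N(\vartheta;X^{(N)}) = \sum_{i=1}^N\int_0^T\nabla_{\vartheta}b(\vartheta;t,X_t^i,\mu_t^{(N)})^{\top}c^{-1}(t,X_t^i)\sigma(t,X_t^i)\,dB_t^i,
\end{equation*}
a centred martingale whose squared $L^2$-norm is the expectation of its bracket. By ellipticity (Assumption \ref{reg sigma}) and the polynomial bound of Assumption \ref{hyp : reg sigma b}, this is bounded by $C\,N\sup_{t\in[0,T]}\EE_{\PP_\vartheta^N}[1+|X_t^i|^{2r_1}+\mathfrak m_{r_2}(\mu_t^{(N)})^2]$, which is finite by Assumption \ref{ass: init condition} and classical moment estimates for interacting Lipschitz systems.

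The main obstacle is (ii), the $L^2(\mathbb W^N)$-continuity of $\vartheta\mapsto \nabla_{\vartheta}\mathcal L^N(\vartheta)$. Writing $\nabla_{\vartheta}\mathcal L^N = \mathcal L^N\nabla_{\vartheta}\ell^N$ and splitting
\begin{equation*}
\nabla_{\vartheta}\mathcal L^N(\vartheta')-\nabla_{\vartheta}\mathcal L^N(\vartheta) = \bigl(\mathcal L^N(\vartheta')-\mathcal L^N(\vartheta)\bigr)\nabla_{\vartheta}\ell^N(\vartheta')+\mathcal L^N(\vartheta)\bigl(\nabla_{\vartheta}\ell^N(\vartheta')-\nabla_{\vartheta}\ell^N(\vartheta)\bigr),
\end{equation*}
I would bound each squared $\mathbb W^N$-norm by Cauchy--Schwarz. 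This requires two uniform controls: first, $\mathcal L^N(\vartheta')\in L^p(\mathbb W^N)$ for all $p\geq 1$ uniformly in $\vartheta'$ in a compact neighbourhood of $\vartheta$, which follows from a Novikov-type bound using the linear growth of $b$ (Assumption \ref{reg b}), the ellipticity of $c$, and the finiteness of all moments of $\mu_0$ (Assumption \ref{ass: init condition}); second, $\nabla_{\vartheta}\ell^N(\vartheta')\in L^q(\PP_\vartheta^N)$ for any $q\geq 1$, locally uniformly in $\vartheta'$, by Burkholder--Davis--Gundy applied to the martingale of Step (iii) together with the polynomial growth of $\nabla_{\vartheta}b$. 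Combined with convergence of $\nabla_{\vartheta}\ell^N(\vartheta')$ to $\nabla_{\vartheta}\ell^N(\vartheta)$ in $\PP_\vartheta^N$-probability (direct from Assumption \ref{hyp : reg sigma b} and $L^2$-convergence of the integrands), Vitali's theorem lifts this to $L^2(\PP_\vartheta^N)$-convergence, and then to $L^2(\mathbb W^N)$-convergence after re-absorbing the likelihood-ratio factor via a further Cauchy--Schwarz step. The technical delicacy is bookkeeping the interplay between the $\mathbb W^N$-square-integrability of $\mathcal L^N(\vartheta')$ and the $\PP_\vartheta^N$-integrability of $\nabla_{\vartheta}\ell^N(\vartheta')$ so that the compositions in each term remain in $L^2(\mathbb W^N)$; this is where the strong moment assumption \ref{ass: init condition} (all polynomial moments, not just a fixed one) is crucial.
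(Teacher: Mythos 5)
Your gradient formula, its reduction to a stochastic integral against the innovation Brownian motions, and your verification of (i) and (iii) coincide with the paper's own (sketched) proof: the integrand $\nabla_{\vartheta}b^{\top}c^{-1}\bigl[dX_t^i-b\,dt\bigr]$ is exactly the paper's $\nabla_{\vartheta}(c^{-1/2}b)^{\top}dB_t^{i,N,\vartheta}$ in \eqref{eq : likelihood forme martingale}, and the ``classical moment estimates for interacting Lipschitz systems'' you invoke are the paper's Lemma \ref{lem: moment bound}. Up to and including (iii) your argument is correct and follows the same route.

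The genuine gap is in your treatment of (ii). You need $\mathcal L^N(\vartheta')\in L^p(\mathbb W^N)$ for \emph{all} $p\geq 1$, locally uniformly in $\vartheta'$, and you assert this follows from a Novikov-type bound using the linear growth of $b$ and Assumption \ref{ass: init condition}. It does not. Writing $\mathcal L^N(\vartheta')=\exp\bigl(M_T-\tfrac12\langle M\rangle_T\bigr)$ with $\langle M\rangle_T=\sum_{i}\int_0^T|(c^{-1/2}b)(\vartheta';t,X_t^i,\mu_t^{(N)})|^2dt$, the standard H\"older/supermartingale splitting only gives $\E_{\mathbb W^N}\bigl[\mathcal L^N(\vartheta')^p\bigr]\leq\E_{\mathbb W^N}\bigl[\exp\bigl((2p^2-p)\langle M\rangle_T\bigr)\bigr]^{1/2}$, so when $b$ has genuine linear growth one needs exponential moments of $\int_0^T\sum_i|X_t^i|^2dt$ with an arbitrarily large prefactor. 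These fail beyond a finite threshold even for $\mu_0=\delta_0$: already in the linear model of Section \ref{sec: kasonga simple}, an explicit Cameron--Martin computation shows $\E_{\mathbb W^N}\bigl[\mathcal L^N(\vartheta)^p\bigr]=\infty$ once $p$ exceeds a finite threshold depending on $\vartheta_1 T$ (and even $p=2$ fails when $\vartheta_1 T$ is large). Moreover Assumption \ref{ass: init condition} provides only polynomial, not exponential, moments of $\mu_0$, so there is no regime of the stated hypotheses in which your claim can be derived. Consequently the Cauchy--Schwarz bookkeeping in your proof of (ii) cannot be closed.

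The missing idea is to work with the square root $(\mathcal L^N)^{1/2}$ rather than with $\mathcal L^N$ itself — this is the notion of quadratic-mean regularity actually used in \cite{ibragimov2013statistical}, which the paper's Definition \ref{def: regular} is paraphrasing. Since $\nabla_{\vartheta}(\mathcal L^N)^{1/2}=\tfrac12(\mathcal L^N)^{1/2}\nabla_{\vartheta}\ell^N$, every quadratic $\mathbb W^N$-mean collapses, by change of measure, to a $\PP^N_{\vartheta'}$-expectation of a polynomially growing functional: $\E_{\mathbb W^N}\bigl[|\nabla_{\vartheta}(\mathcal L^N)^{1/2}(\vartheta')|^2\bigr]=\tfrac14\,\E_{\PP^N_{\vartheta'}}\bigl[|\nabla_{\vartheta}\ell^N(\vartheta')|^2\bigr]$, which is finite and continuous in $\vartheta'$ by \eqref{eq : likelihood forme martingale}, Assumption \ref{hyp : reg sigma b} and Lemma \ref{lem: moment bound}; continuity in quadratic mean then follows from convergence in probability together with convergence of the $L^2(\mathbb W^N)$-norms (a Riesz--Scheff\'e argument), and no moment of $\mathcal L^N$ under $\mathbb W^N$ beyond the first is ever used. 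This device — take a root of the likelihood first, then shift the measure — is precisely what the paper itself deploys in Step 1 of the proof of Theorem \ref{thm: MLE prop} when controlling $\partial_{u_q}(\mathcal Z_N)^{1/r}$; your verification of (ii) should be run the same way.
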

\begin{proof}[(Sketch of) Proof]
By exchanging the order of the differentiation with respect to $\vartheta$ and the stochastic integral 
we have
\begin{align*} 
\partial_{\vartheta_k}\ell^N(\vartheta;X^{(N)}) & = \sum_{i=1}^N \int_0^T \partial_{\vartheta_k} (c^{-1}b)(\vartheta; t,X^i_t,\mu^{(N)}_t)^{\top}dX^i_t \\
&- \sum_{i=1}^N\int_0^T \partial_{\vartheta_k} (c^{-1/2}b)(\vartheta; t,X^i_t,\mu^{(N)}_t)^\top (c^{-1/2}b)(\vartheta; t,X^i_t,\mu^{(N)}_t) dt.
 \end{align*}
We obtain the representation
\begin{equation} \label{eq : likelihood forme martingale}
 \partial_{\vartheta_k}\ell^N(\vartheta;X^{(N)}) = \sum_{i=1}^N \int_0^T \partial_{\vartheta_k} (c^{-1/2}b)(\vartheta; t,X^i_t,\mu^{(N)}_t)^{\top}dB^{i, N,\vartheta}_t,
\end{equation}
where the 
$$(B^{i, N, \vartheta}_t)_{t \in [0,T]} = \Big(\int_0^t c^{-1/2}(s,X_s^i)(dX_s^i-b(\vartheta; s,X_s^i,\mu_s^{(N)})ds)\Big)_{t \in [0,T]},\;\;1 \leq i \leq N$$
 are independent Brownian motions on $\R^d$ under $\PP_\vartheta^N$. The properties (i), (ii) and (iii) are then a simple consequence of Assumption \ref{hyp : reg sigma b} together with the following moment bound, 
\begin{lem} \label{lem: moment bound}
Under Assumptions \ref{ass: init condition}, \ref{reg sigma}, \ref{reg b}, for every $r \geq 1$, we have
$$\sup_{\vartheta \in \Theta, t \in [0,T], N \geq 1} \E_{\PP_\vartheta^N}[|X_t^i|^r] < \infty.$$
\end{lem} 
Note that $\E_{\PP_\vartheta^N}[|X_t^i|^r]$ does not depend on $i$. The proof of Lemma \ref{lem: moment bound} is given in Appendix \ref{app: proof of moment bound}.
\end{proof}

Finally, we have a notion of Fisher information matrix by setting 
$$
\mathbb I_{\mathcal E^N}(\vartheta)  =  \EE_{\PP_{\vartheta}^{N}} \big[\nabla_{\vartheta} \ell^N(\vartheta;X^{(N)}) \nabla_{\vartheta} \ell^N(\vartheta;X^{(N)})^{\top}\big]. 
$$
Thanks to \eqref{eq : likelihood forme martingale}, we also have
\begin{equation} \label{eq: fisher info}
\mathbb I_{\mathcal E^N}(\vartheta) = \Big(\sum_{i=1}^N \EE_{\PP_{\vartheta}^{N}}\Big[ \int_0^T  \partial_{\vartheta_\ell}(c^{-1/2}b)(\vartheta;t,X^i_t,\mu^{(N)}_t) \partial_{\vartheta_{\ell'}}(c^{-1/2}b)(\vartheta;t,X^i_t,\mu^{(N)}_t)^{\top}   dt\Big]\Big)_{1 \leq \ell, \ell' \leq p}.
 \end{equation}

\subsection{The companion McKean-Vlasov product experiment}\label{subsection : syst interacting limit}

We let $\mathcal C = \mathcal C([0,T],\R^d)$ denote the space of continuous functions on $\R^d$, equipped with the natural filtration $(\mathcal F_t)_{0 \leq t \leq T}$  induced by the canonical mapping $X_t(\omega)=\omega_t$. For every $\vartheta \in \Theta$, we let $\overline{\PP}_\vartheta$ denote the unique law under which 
the process 
$$(B_t^{\vartheta})_{t \in [0,T]} = \Big(\int_0^t c^{-1/2}(s,X_s)(dX_s-b(\vartheta; s,X_s, \mu_s^\vartheta)ds)\Big)_{t \in [0,T]}$$ is a standard Brownian motion on $\R^d$, appended with the condition $\mathcal L(X_0) = \mu_0$, and $\mu^\vartheta = (\mu_t^\vartheta)_{t \in [0,T]}$ is a probability solution of \eqref{eq: mckv approx}. The family $(\overline{\PP}_\vartheta)_{\vartheta \in \Theta}$ is well-defined under Assumptions \ref{ass: init condition}, \ref{reg sigma}, \ref{reg b}. In particular, the canonical process $X$ on $(\mathcal C, \mathcal F_T)$ is a solution to the McKean-Vlasov equation 
\begin{equation} \label{eq : diff limite}
\left\{
\begin{array}{l}
dX_t = b(\vartheta;t,X_t,\mu_t^\vartheta) dt + \sigma(t,X_t)dB_t^{\vartheta},\; t \in [0,T],\\  
\mathcal L(X_0) = \mu_0.
\end{array}
\right.
\end{equation}
The following result is the counterpart of Lemma \ref{lem: moment bound}. Note in particular that the marginals of $\overline{\PP}_\vartheta$ coincide with the solution $\mu^\vartheta = (\mu_t^\vartheta)_{t \in [0,T]}$ of the Fokker-Planck equation \eqref{eq: mckv approx}.
\begin{lem} \label{lem : puissances limite}
Under Assumptions \ref{ass: init condition}, \ref{reg sigma}, \ref{reg b}, for every $r \geq 1$, we have
$$\sup_{\vartheta \in \Theta, t \in [0,T]} \E_{\overline{\PP}_\vartheta}[|X_t|^r]  = \sup_{\vartheta \in \Theta, t \in [0,T]}\int_{\R^d}|x|^r \mu_t^{\vartheta}(dx)< \infty.$$
\end{lem}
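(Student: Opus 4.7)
The plan is to apply Itô's formula to a smooth positive power of $X_t$ under $\overline{\PP}_\vartheta$, take expectations, and close the resulting Gronwall inequality on $t \mapsto \E_{\overline{\PP}_\vartheta}[|X_t|^r]$, uniformly in $\vartheta \in \Theta$. The crucial fact that makes this feasible is that, by construction of $\overline{\PP}_\vartheta$, the measure $\mu_t^\vartheta$ appearing in the drift is nothing but the marginal law $\mathcal{L}_{\overline{\PP}_\vartheta}(X_t)$, so the McKean-Vlasov nonlinearity can be fed back into the moment we wish to control.

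First, I would derive sublinear growth bounds from the Lipschitz assumptions. Since $\mathcal{W}_1(\nu,\delta_0) = \mathfrak{m}_1(\nu)$, Assumption \ref{reg b} gives
$$|b(\vartheta; t, x, \nu)| \leq |b|_{\mathrm{Lip}}\bigl(|x| + \mathfrak{m}_1(\nu)\bigr) + |b(\vartheta; t, 0, \delta_0)|,$$
and Assumption \ref{reg sigma} yields $|\sigma(t,x)|^2 \leq C(1 + |x|^2)$ uniformly in $t$. For these bounds to be uniform in $\vartheta$, one uses the bound at $\vartheta_0$ together with compactness of $\Theta$ and the joint continuity of $\vartheta \mapsto b(\vartheta; \cdot)$ built into the setting to control $\sup_{\vartheta, t}|b(\vartheta; t, 0, \delta_0)|$.

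Next, I would apply Itô's formula to $F(x) = (1+|x|^2)^{r/2}$ on $[0, t \wedge \tau_n]$, where $\tau_n = \inf\{s : |X_s| \geq n\}$ localizes the stochastic integral to a true martingale. Taking expectation under $\overline{\PP}_\vartheta$ and injecting the above growth bounds produces
$$\E_{\overline{\PP}_\vartheta}\bigl[F(X_{t \wedge \tau_n})\bigr] \leq \int F\, d\mu_0 + C \int_0^t \Bigl(1 + \E_{\overline{\PP}_\vartheta}\bigl[F(X_{s \wedge \tau_n})\bigr] + \mathfrak{m}_1(\mu_s^\vartheta)\Bigr)\, ds,$$
where the constant $C$ depends only on $|b|_{\mathrm{Lip}}$, $\sigma_{\pm}$, $r$, $T$ and on $\sup_{\vartheta, t}|b(\vartheta; t, 0, \delta_0)|$. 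The self-referential identity $\mathfrak{m}_1(\mu_s^\vartheta) = \E_{\overline{\PP}_\vartheta}[|X_s|] \leq \E_{\overline{\PP}_\vartheta}[F(X_s)]^{1/r}$ (a priori finite by the localization $\tau_n$) then turns this into a Gronwall-type inequality for $f_\vartheta(t) := \E_{\overline{\PP}_\vartheta}[F(X_{t \wedge \tau_n})]$ with constants independent of $\vartheta$. Since $f_\vartheta(0) \leq \int (1+|x|^2)^{r/2}\mu_0(dx) < \infty$ by Assumption \ref{ass: init condition}, Gronwall yields a uniform bound in $(t, \vartheta)$; Fatou's lemma sending $n \to \infty$ removes the localization and gives the claim, together with the representation of the marginals of $\overline{\PP}_\vartheta$ as $\mu_t^\vartheta$.

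The main obstacle is precisely this self-referential appearance of the unknown law $\mu_t^\vartheta$ inside the drift of the SDE defining $\overline{\PP}_\vartheta$, which is the very reason a standard moment estimate for an SDE with sublinear drift is not directly applicable. The resolution is the observation that $\mu_t^\vartheta$ is the marginal of $\overline{\PP}_\vartheta$, so the Wasserstein term collapses to a moment of $X_t$ itself and the inequality closes on $f_\vartheta$. The scheme is the single-particle analogue of the one used for Lemma \ref{lem: moment bound}, structurally simpler as no $N$-dependence needs to be tracked, only the nonlinearity.
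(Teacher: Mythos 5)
Your route is genuinely different from the paper's. The paper never runs a Gronwall argument on the McKean--Vlasov equation itself: it cites Theorem 4.21 of \cite{carmona2018probabilistic} to get $\E_{\overline{\PP}_\vartheta}[\sup_{t \leq T}|X_t|^r]<\infty$ at each \emph{fixed} $\vartheta$, and then obtains the uniformity in $\vartheta$ by comparing $\overline{X}^{i,\vartheta}$ with the particle system via the coupling estimate \eqref{eq: last coupling} of Lemma \ref{lem: couplings} together with the uniform particle bound of Lemma \ref{lem: moment bound}. Your direct It\^o--Gronwall argument on \eqref{eq : diff limite} is the standard self-contained alternative; it buys independence from the particle system and from Lemma \ref{lem: couplings} (whose proof of \eqref{eq: last coupling} in the paper appeals back to the present lemma for the uniform applicability of the Fournier--Guillin estimate, so your route also sidesteps that interdependence), at the price of redoing by hand an estimate the paper gets by citation.

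There is, however, one step that fails as written: the parenthetical claim that $\mathfrak m_1(\mu_s^\vartheta)=\E_{\overline{\PP}_\vartheta}[|X_s|]$ is ``a priori finite by the localization $\tau_n$''. Localization makes the stochastic integral a true martingale and makes $f_\vartheta(t)=\E_{\overline{\PP}_\vartheta}[F(X_{t\wedge\tau_n})]$ finite, but the measure $\mu_s^\vartheta$ in the drift is the law of the \emph{unstopped} process: stopping $X$ does not stop its law. In particular there is no inequality of the form $\mathfrak m_1(\mu_s^\vartheta)^r\leq C\,f_\vartheta(s)$ (on $\{\tau_n<s\}$ the stopped process sits at radius $n$ while $X_s$ is unconstrained), so the Gronwall inequality does not close on the localized quantity, and if $\sup_s\mathfrak m_r(\mu_s^\vartheta)$ were infinite your bound would be vacuous. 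The repair is standard but must be stated: you need an a priori finiteness input for the unlocalized moments --- either quote the well-posedness theory (exactly what the paper takes from Theorem 4.21 of \cite{carmona2018probabilistic}), or bootstrap from the fact that, by definition of a solution, $t\mapsto\mu_t^\vartheta$ is a continuous path in $\mathcal P_1$, hence $\sup_t\mathfrak m_1(\mu_t^\vartheta)<\infty$, which by a first pass of your estimate yields a finite (possibly $\vartheta$-dependent) bound on $\sup_t\E_{\overline{\PP}_\vartheta}[F(X_t)]$. Once that finiteness is secured, your inequality may legitimately be closed on the unlocalized function $g(t)=\E_{\overline{\PP}_\vartheta}[F(X_t)]$ (localization plus Fatou serving only to kill the martingale term), and since all constants depend only on $|b|_{\mathrm{Lip}}$, $b_0$, $\sigma_\pm$, $r$, $T$, $\mu_0$ and the compactness of $\Theta$ (through the uniform control of $\sup_{t,\vartheta}|b(\vartheta;t,0,\delta_0)|$, which, as in the paper's own proof of Lemma \ref{lem: moment bound}, implicitly uses Assumption \ref{hyp : reg sigma b}), the uniformity in $\vartheta$ then follows as you intend.
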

The proof is given in Section \ref{app: proof of moment limit}. We also have the following smoothness property in the parameter $\vartheta$, proof of which is delayed until Section \ref{app: proof of smooth}.
\begin{prop} \label{prop: smoothness parameter McKean}
Under Assumption \ref{ass: init condition}, \ref{reg sigma}, \ref{reg b} and \ref{hyp : reg sigma b}, the mapping $\vartheta \mapsto \mu_t^{\vartheta}$ is Lipschitz continuous in the Wasserstein-1 metric $\mathcal W_1$, uniformly in $t \in [0,T]$.
\end{prop}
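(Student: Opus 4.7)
The plan is a standard synchronous coupling argument. Fix $\vartheta, \vartheta' \in \Theta$. On a common stochastic basis carrying a standard Brownian motion $B$ and an initial random variable $X_0 \sim \mu_0$, construct the two strong solutions
\begin{align*}
dX_t^\vartheta &= b(\vartheta; t, X_t^\vartheta, \mu_t^\vartheta)\, dt + \sigma(t, X_t^\vartheta)\, dB_t, \\
dX_t^{\vartheta'} &= b(\vartheta'; t, X_t^{\vartheta'}, \mu_t^{\vartheta'})\, dt + \sigma(t, X_t^{\vartheta'})\, dB_t,
\end{align*}
with $X_0^\vartheta = X_0^{\vartheta'} = X_0$. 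Since $(X_t^\vartheta, X_t^{\vartheta'})$ is a coupling of $(\mu_t^\vartheta, \mu_t^{\vartheta'})$, we have $\mathcal W_1(\mu_t^\vartheta, \mu_t^{\vartheta'}) \leq \mathbb E[|X_t^\vartheta - X_t^{\vartheta'}|]$, and it suffices to control $\phi(t) := \mathbb E[|X_t^\vartheta - X_t^{\vartheta'}|^2]$ uniformly in $t$ and linearly in $|\vartheta - \vartheta'|^2$.

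Applying It\^o's isometry together with the Cauchy--Schwarz inequality yields
\[
\phi(t) \leq 2T\int_0^t \mathbb E\big[|b(\vartheta; s, X_s^\vartheta, \mu_s^\vartheta) - b(\vartheta'; s, X_s^{\vartheta'}, \mu_s^{\vartheta'})|^2\big]\, ds + 2\int_0^t \mathbb E\big[|\sigma(s, X_s^\vartheta) - \sigma(s, X_s^{\vartheta'})|^2\big]\, ds.
\]
For the drift, the triangle inequality gives a split into the $\vartheta$-increment $b(\vartheta; s, X_s^\vartheta, \mu_s^\vartheta) - b(\vartheta'; s, X_s^\vartheta, \mu_s^\vartheta)$ and the $(x,\nu)$-increment $b(\vartheta'; s, X_s^\vartheta, \mu_s^\vartheta) - b(\vartheta'; s, X_s^{\vartheta'}, \mu_s^{\vartheta'})$. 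By the mean value theorem along a segment inside $\Theta$ together with the polynomial growth bound in Assumption \ref{hyp : reg sigma b}, the first piece is bounded by $C(1 + |X_s^\vartheta|^{r_1} + \mathfrak m_{r_2}(\mu_s^\vartheta))|\vartheta - \vartheta'|$; by Lemma \ref{lem : puissances limite}, its squared $L^1$-expectation is $\leq C|\vartheta - \vartheta'|^2$, uniformly in $s$ and $\vartheta$. The second piece is bounded via Assumption \ref{reg b} by $C(|X_s^\vartheta - X_s^{\vartheta'}| + \mathcal W_1(\mu_s^\vartheta, \mu_s^{\vartheta'}))$, and the coupling bound $\mathcal W_1(\mu_s^\vartheta, \mu_s^{\vartheta'}) \leq \mathbb E[|X_s^\vartheta - X_s^{\vartheta'}|] \leq \sqrt{\phi(s)}$ replaces the Wasserstein term by $\phi(s)^{1/2}$. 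The diffusion contribution is handled identically using Assumption \ref{reg sigma}.

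Combining the bounds yields an integral inequality of the form
\[
\phi(t) \leq C |\vartheta - \vartheta'|^2 + C \int_0^t \phi(s)\, ds,\qquad t \in [0,T],
\]
with $C$ depending only on $T$, $\sigma_\pm$, $|b|_{\mathrm{Lip}}$, the constants in Assumption \ref{hyp : reg sigma b} and the uniform moments from Lemma \ref{lem : puissances limite}. Gr\"onwall's lemma then gives $\phi(t) \leq C|\vartheta-\vartheta'|^2 e^{CT}$, whence $\mathcal W_1(\mu_t^\vartheta, \mu_t^{\vartheta'}) \leq \sqrt{\phi(t)} \leq C|\vartheta-\vartheta'|$ uniformly in $t \in [0,T]$.

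The only mildly delicate point is the first step, namely controlling the $\vartheta$-sensitivity of the drift: the bound in Assumption \ref{hyp : reg sigma b} is polynomial rather than uniform, so one must invoke Lemma \ref{lem : puissances limite} in a moment of order $2r_1$ to close the estimate; compactness of $\Theta$ is used to apply the mean value theorem on a segment inside $\Theta$. Everything else is routine Gr\"onwall and It\^o calculus.
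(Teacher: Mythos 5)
Your proposal is correct and follows essentially the same route as the paper: a synchronous coupling of the two McKean--Vlasov solutions driven by the same Brownian motion and initial condition, a split of the drift difference into a $\vartheta$-increment (controlled via Assumption \ref{hyp : reg sigma b} and the uniform moments of Lemma \ref{lem : puissances limite}) and an $(x,\nu)$-increment (controlled via Assumption \ref{reg b}), closure of the Wasserstein term by the coupling bound, and Gr\"onwall. The only difference is cosmetic: the paper runs the same argument in $L^{2r}$ for arbitrary $r\geq 1$ (using Burkholder--Davis--Gundy instead of It\^o's isometry) because the resulting stronger estimate \eqref{eq: anticipate} is reused later as \eqref{eq: coupling 2} of Lemma \ref{lem: couplings}, whereas your second-moment version suffices for the proposition itself.
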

We next consider the limit experiment
$$\mathcal G = \big(\mathcal C, \mathcal F_T, (\overline{\PP}_\vartheta)_{\vartheta \in \Theta}\big)$$
and its $N$-fold counterpart
$$\mathcal G^{\otimes N} =  \Big(\mathcal C^N, \mathcal F_T^N, (\overline{\PP}_\vartheta^{\otimes N})_{\vartheta \in \Theta}\Big)$$
that serves as an approximation for the experiment $\mathcal E^N$. Inspired by classical propagation of chaos techniques (see in particular \cite{LAcker}), we can easily show that the measures $\PP_\vartheta^N$ and $\overline{\PP}_\vartheta^{\otimes N}$ are indistinguishable when the drift is of the form
\begin{equation} \label{eq: linear}
b(\vartheta; t,x,\nu) = \int_{\R^d}\widetilde b(\vartheta; t,x,y)\nu(dy),
\end{equation}
for some kernel $\widetilde b(\vartheta; \cdot): [0,T] \times \R^d \times \R^d \rightarrow \R^d$ such that 
\begin{equation} \label{eq: hyp noyau}
\sup_{t \in [0,T], \vartheta \in \Theta} \big|\widetilde b(\vartheta; t;x;y)\big| \leq C(1+|x|^{r_1}+|y|^{r_2})
\end{equation}
for some $r_1,r_2 \geq 1$, a situation that covers most of our examples, see Section \ref{sec: examples} below. More precisely, we have the following 
\begin{prop} \label{prop: indisting}
Under Assumptions \ref{ass: init condition}, \ref{reg sigma}, \ref{reg b}, if $b$ has moreover the form \eqref{eq: linear}-\eqref{eq: hyp noyau}, we have
\begin{equation} \label{eq: entropy control}
\limsup_{N \rightarrow \infty}\sup_{\vartheta \in \Theta} \E_{\overline{\PP}_{\vartheta}^{\otimes N}}\Big[\log\frac{d\overline{\PP}_\vartheta^{\otimes N}}{d\PP_\vartheta^N}\Big] < \infty.
\end{equation}
In particular, if
$$\sup_{\vartheta \in \Theta} \int_0^T \int_{\R^d \times \R^d}|\widetilde b(\vartheta; t,x,y)|^2(\mu_t^\vartheta \otimes \mu_t^\vartheta)(dx, dy) dt< 4,
$$
then
\begin{equation} \label{eq: pinsker}
\limsup_{N \rightarrow \infty}\sup_{\vartheta \in \Theta}\|\PP_{\vartheta}^N-\overline{\PP}_\vartheta^{\otimes N}\|_{TV} < 1,
\end{equation}
where $\|\cdot\|_{TV}$ denotes total variation distance. 
\end{prop}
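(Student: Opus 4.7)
The plan is to compute the relative entropy $\mathrm{KL}\big(\overline{\PP}_{\vartheta}^{\otimes N} \,\big\|\, \PP_{\vartheta}^N\big)$ via Girsanov's theorem and to exploit the crucial asymmetry that, under the product measure $\overline{\PP}_{\vartheta}^{\otimes N}$, the coordinates of $X^{(N)}$ are \emph{exactly} i.i.d.\ McKean--Vlasov solutions with deterministic marginals $\mu_t^{\vartheta}$. Computing the KL in the opposite direction would require a propagation-of-chaos argument; the chosen direction avoids this entirely and reduces the problem to an elementary second-moment estimate on an empirical mean.

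First I would write both $\PP_{\vartheta}^N$ and $\overline{\PP}_{\vartheta}^{\otimes N}$ as Girsanov densities against the common reference $\mathbb{W}^N$: the log-likelihoods take the form \eqref{log-likelihood Girsanov}, the only difference being that $\mu_t^{(N)}$ is replaced by the deterministic $\mu_t^{\vartheta}$ in the product law. Subtracting the two log-likelihoods, substituting $dX_t^i = b(\vartheta;t,X_t^i,\mu_t^{\vartheta})dt + \sigma(t,X_t^i)dB_t^{i,\vartheta}$ under $\overline{\PP}_{\vartheta}^{\otimes N}$, and observing that the stochastic-integral part has an integrand with polynomial growth (controlled by Assumption \ref{reg b} and Lemma \ref{lem : puissances limite}) so that its expectation vanishes, one arrives at the standard identity
\[
\mathrm{KL}\big(\overline{\PP}_{\vartheta}^{\otimes N} \,\big\|\, \PP_{\vartheta}^N\big) \;=\; \tfrac{1}{2} \sum_{i=1}^N \EE_{\overline{\PP}_{\vartheta}^{\otimes N}} \!\int_0^T \!\big| c^{-1/2}(t, X_t^i)\big[b(\vartheta; t, X_t^i, \mu_t^{\vartheta}) - b(\vartheta; t, X_t^i, \mu_t^{(N)})\big] \big|^2 dt.
\]

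Next, under the linear form \eqref{eq: linear} the drift difference inside the integrand is exactly a centred empirical-mean deviation: with $\phi_t^i(y) := c^{-1/2}(t,X_t^i)\,\widetilde b(\vartheta;t,X_t^i,y)$, it equals $\int \phi_t^i\,d\mu_t^{\vartheta} - N^{-1}\sum_{j=1}^N \phi_t^i(X_t^j)$. Conditioning on $X^i$, the remaining $(X_t^j)_{j \neq i}$ are i.i.d.\ $\mu_t^{\vartheta}$ and independent of $X^i$; isolating the diagonal $j = i$ contribution and expanding the square, the cross-terms vanish under conditional expectation and I get
\[
\sum_{i=1}^N \EE \big| c^{-1/2}(t,X_t^i) \Delta b_t^i \big|^2 \;=\; \tfrac{1}{N} A_t(\vartheta) + \tfrac{N-1}{N} B_t(\vartheta),
\]
with $B_t(\vartheta) = \EE_{X \sim \mu_t^{\vartheta}}\mathrm{Var}_{Y \sim \mu_t^{\vartheta}}\!\big(c^{-1/2}(t,X)\widetilde b(\vartheta;t,X,Y)\big)$ and $A_t(\vartheta)$ a similar $O(1)$ diagonal remainder. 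The growth bound \eqref{eq: hyp noyau} together with Lemma \ref{lem : puissances limite} makes both $A_t$ and $B_t$ bounded uniformly in $(t,\vartheta) \in [0,T]\times\Theta$, which immediately yields \eqref{eq: entropy control}.

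For \eqref{eq: pinsker} I would pass to the limsup in $N$ and bound variance by second moment, using $|c^{-1/2}u|^2 \leq \sigma_-^{-2}|u|^2$, to control $\limsup_N \sup_{\vartheta}\mathrm{KL}$ by a constant multiple of $\sup_{\vartheta} \int_0^T \!\!\int |\widetilde b|^2 \, d(\mu_t^{\vartheta}\otimes\mu_t^{\vartheta})\, dt$. Pinsker's inequality $2\|P-Q\|_{\mathrm{TV}}^2 \leq \mathrm{KL}(P\|Q)$ then produces \eqref{eq: pinsker} under the stated smallness assumption (the $\sigma_-^{-2}$ factor being absorbable into the explicit threshold). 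The main obstacle is really only the bookkeeping in the second step --- isolating the $j=i$ diagonal and verifying that the cross-terms vanish by conditioning on $X^i$; once the correct direction of the KL has been picked, everything else is Girsanov combined with Pinsker and is essentially automatic.
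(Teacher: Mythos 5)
Your proposal is correct and takes essentially the same route as the paper's own proof: the entropy is computed in the same (product-measure) direction via Girsanov, the empirical-mean deviation is split into its diagonal $j=i$ part and the off-diagonal part, which is conditionally i.i.d.\ and centred given $X^i$, the uniform bounds come from \eqref{eq: hyp noyau} together with Lemma \ref{lem : puissances limite}, and \eqref{eq: pinsker} follows from Pinsker's inequality. The only, essentially cosmetic, difference is that you expand the square exactly—cross-terms vanishing by conditioning on $X^i$—and so obtain the conditional-variance constant directly, whereas the paper bounds the same decomposition with $(a+b)^2\le(1+\rho)a^2+(1+\rho^{-1})b^2$ and sends $\rho\to 0$ at the end; both arguments give the same conclusion.
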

The proof is given in Section \ref{proof: prop: indisting}. Some remarks are in order: {\bf 1)} The estimate \eqref{eq: entropy control} tells us that it is impossible to statistically discriminate between $\PP_{\vartheta}^N$ and $\overline{\PP}_\vartheta^{\otimes N}$ asymptotically. More precisely, inequality \eqref{eq: pinsker} shows in particular that  provided $\widetilde b$ is not too big or $T$ not too large, then there exists no test of the null $H_0: \PP_{\vartheta}^N = \overline{\PP}_\vartheta^{\otimes N}$ against the alternative $H_1: \PP_{\vartheta}^N \neq \overline{\PP}_\vartheta^{\otimes N}$ with asymptotically arbitrarily small first and second kind error in the limit $N \rightarrow \infty$. {\bf 2)} We will actually prove a stronger result in Section \ref{sec: main} below, showing that both $(\mathcal E^N)_{N \geq 1}$ and $(\mathcal G^{\otimes N})_{N \geq 1}$ share the LAN property, with same asymptotic Fisher information. {\bf 3)} Finally, \eqref{eq: entropy control} may hold in wider generality when the dependence in the measure variable in the drift is nonlinear, as soon as we have some differentiability in the following sense: there exists $\partial_\nu b(\vartheta;t,x,\cdot): \R^d \times \mathcal P_1  \rightarrow \R^d$ such that
$$b(\vartheta;t,x,\nu)-b(\vartheta;t,x,\nu') = \int_0^1 \partial_\nu b(\vartheta;t,x,y,\lambda \nu+(1-\lambda) \nu') (\nu-\nu')(dy)$$
for every $\nu,\nu' \in \mathcal P_1$ and $\partial_\nu b(\vartheta;t,x,\cdot)$ satisfies additional smoothness properties. Iterating the operator $\partial_\nu$, if 
$\partial_\nu^k b(\vartheta;t,x,\cdot): (\R^d)^k \times \mathcal P_1 \rightarrow \R^d$ exists and satisfies some smoothness and integrability properties, we may expect \eqref{eq: entropy control} to hold as soon as $k \geq d/2$. We refer to Assumption 4 and Proposition 19 of \cite{della2022nonparametric} where this approach is developed.\\

We also have a log-likelihood in the experiment $\mathcal G^{\otimes N}$ by setting
\begin{equation}\label{log-likelihood Girsanov product}
\overline{\ell}^N(\vartheta; X^{(N)}) = \sum_{i=1}^N  \int_0^T  (c^{-1}b)(\vartheta; t,X^{i}_t,\mu^{\vartheta}_t)^{\top} dX^i_t
-\frac{1}{2} \sum_{i=1}^N \int_0^T |(c^{-1/2}b)(\vartheta; t,X^{i}_t,\mu^{\vartheta}_t)|^2 dt. 
\end{equation}
This is the same argument as before:  the laws $\overline{\PP}_\vartheta^{\otimes N}$ are all absolutely continuous w.r.t. $\mathbb W^N$, and for every $\vartheta \in \Theta$,
$$\frac{d\overline{\PP}_{\vartheta}^{\otimes N}}{d\mathbb W^N}(X^{(N)}) = \exp\big(\overline{\ell}^N(\vartheta; X^{(N)})\big)$$
holds 
$\mathbb W^N$-almost-surely.\\ 
Finally under Assumptions  \ref{ass: init condition}, \ref{reg sigma}, \ref{reg b} and \ref{hyp : reg sigma b}, the (sequence of) experiment(s)  $\mathcal{G}^{\otimes N}$ is also a regular model and its (normalised) Fisher information $\mathbb I_{\mathcal G}(\vartheta) = N^{-1}\mathbb I_{\mathcal G^{\otimes N}}(\vartheta)$ is given by
\begin{align*}
   &   N^{-1}\EE_{\overline{\PP}_\vartheta} \big[\nabla_{\vartheta} \overline{\ell}^N(\vartheta; X^{(N)}) \nabla_{\vartheta} \overline{\ell}^N(\vartheta; X^{(N)})^{\top}\big] \\
&= \sum_{j=1}^d \int_0^T \int_{\RR^d}  \nabla_{\vartheta} (c^{-1/2}b)^j(\vartheta;t,x,\mu^{\vartheta}_t)\nabla_{\vartheta} (c^{-1/2}b)^j(\vartheta;t,x,\mu^{\vartheta}_t)^{\top}  \mu^{\vartheta}_t(dx)dt \\
& =  \Big(\int_0^T \int_{\R^d} \partial_{\vartheta_\ell}(c^{-1/2}b)(\vartheta;t,x,\mu^{\vartheta}_t) \partial_{\vartheta_{\ell'}}(c^{-1/2}b)(\vartheta;t,x,\mu^{\vartheta}_t)^{\top}  \mu_t^\vartheta(dx) dt\Big)_{1 \leq \ell, \ell' \leq p}.
\end{align*}
Moreover, the mapping $\vartheta \mapsto \mathbb I_{\mathcal G}(\vartheta)$ is smooth and appears as the (normalised) asymptotic information of $\mathcal E^N$:
\begin{prop} \label{prop: fisher cont}
Under Assumptions  \ref{ass: init condition}, \ref{reg sigma}, \ref{reg b} and \ref{hyp : reg sigma b}, the mapping $\vartheta \mapsto \mathbb I_{\mathcal G}(\vartheta)$ is Lipschitz continuous. Moreover, for every $\vartheta$ in (the interior of) $\Theta$, we have
$$N^{-1} \mathbb I_{\mathcal E^N}(\vartheta) \rightarrow \mathbb I_{\mathcal G}(\vartheta)$$
as $N \rightarrow \infty$, where $ \mathbb I_{\mathcal E^N}(\vartheta)$ is the Fisher information matrix of the experiment $\mathcal E^N$ defined in \eqref{eq: fisher info} above.
\end{prop}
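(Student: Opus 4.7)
The plan is to treat the two assertions separately, both exploiting the explicit formula displayed for $\mathbb{I}_{\mathcal{G}}$ together with the uniform smoothness granted by Assumption \ref{hyp : reg sigma b} and Proposition \ref{prop: smoothness parameter McKean}. Throughout, I write $G_\ell(\vartheta; t, x, \nu) = \partial_{\vartheta_\ell}(c^{-1/2}b)(\vartheta; t, x, \nu)$, so that each entry of $\mathbb{I}_{\mathcal{G}}(\vartheta)$ reads $\int_0^T\!\int_{\R^d} G_\ell G_{\ell'}^{\top}(\vartheta; t,x,\mu_t^\vartheta)\,\mu_t^\vartheta(dx)\,dt$. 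Note that Assumption \ref{reg sigma} makes $c^{-1/2}$ bounded and Lipschitz in $(t,x)$, so $G_\ell$ inherits the polynomial growth $|G_\ell(\vartheta;t,x,\nu)| \leq C(1+|x|^{r_1}+\mathfrak m_{r_2}(\nu))$ and the Lipschitz property in $(x,\nu)$ from Assumption \ref{hyp : reg sigma b}.

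\emph{Lipschitz continuity of $\vartheta\mapsto\mathbb{I}_{\mathcal{G}}(\vartheta)$.} I would expand the increment $\mathbb{I}_{\mathcal{G}}(\vartheta)-\mathbb{I}_{\mathcal{G}}(\vartheta')$ by the usual telescoping $fg^{\top}-f'{g'}^{\top} = (f-f')g^{\top}+f'(g-g')^{\top}$ on the pair $(G_\ell,G_{\ell'})$, and compare the two integrations against $\mu_t^\vartheta$ and $\mu_t^{\vartheta'}$ via Kantorovich duality (the test function being Lipschitz after freezing $\vartheta$). The $\vartheta$-increments of $G_\ell$ are handled by the mean-value theorem and the second-derivative bound in Assumption \ref{hyp : reg sigma b}; the $\mu$-increments by Proposition \ref{prop: smoothness parameter McKean}. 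Polynomial-growth factors in $(x,\nu)$ are absorbed through the uniform moment bound of Lemma \ref{lem : puissances limite}, yielding a constant $C$ such that $|\mathbb{I}_{\mathcal{G}}(\vartheta)-\mathbb{I}_{\mathcal{G}}(\vartheta')| \leq C|\vartheta-\vartheta'|$.

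\emph{Convergence $N^{-1}\mathbb{I}_{\mathcal{E}^N}(\vartheta)\to\mathbb{I}_{\mathcal{G}}(\vartheta)$.} By exchangeability of $(X^1,\ldots,X^N)$ under $\PP_\vartheta^N$, formula \eqref{eq: fisher info} collapses into
$$N^{-1}\mathbb{I}_{\mathcal{E}^N}(\vartheta)_{\ell\ell'} = \E_{\PP_\vartheta^N}\Big[\int_0^T G_\ell G_{\ell'}^{\top}(\vartheta; t, X_t^1, \mu_t^{(N)})\,dt\Big],$$
while $\mathbb{I}_{\mathcal{G}}(\vartheta)_{\ell\ell'} = \E_{\overline{\PP}_\vartheta^{\otimes N}}[\int_0^T G_\ell G_{\ell'}^{\top}(\vartheta; t, \bar X_t^1, \mu_t^\vartheta)\,dt]$ for any i.i.d.\ copies $(\bar X^i)$ with law $\overline{\PP}_\vartheta$. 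I would build these two systems on a common probability space using Sznitman's synchronous coupling: same $\mu_0^{\otimes N}$ initial condition, same Brownian motions $B^i$. Gronwall applied to $|X_t^1-\bar X_t^1|^2$ together with Assumption \ref{reg b} and the triangle inequality $\mathcal W_1(\mu_t^{(N)},\mu_t^\vartheta) \leq N^{-1}\sum_i|X_t^i-\bar X_t^i|+\mathcal W_1(\bar\mu_t^{(N)},\mu_t^\vartheta)$ yields
$$\sup_{t\leq T}\E\big[|X_t^1-\bar X_t^1|^2\big] \leq C\int_0^T \E\big[\mathcal W_1(\bar\mu_s^{(N)},\mu_s^\vartheta)^2\big]\,ds \longrightarrow 0$$
by Fournier--Guillin (no rate needed). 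The splitting of the difference $G_\ell G_{\ell'}^{\top}(\vartheta; t, X_t^1, \mu_t^{(N)}) - G_\ell G_{\ell'}^{\top}(\vartheta; t, \bar X_t^1, \mu_t^\vartheta)$ by the same identity as above, combined with Lipschitz continuity of $G_\ell$ in $(x,\nu)$ and polynomial boundedness of its values, produces an integrand dominated in $L^1(dt)$ by $C_1\cdot(|X_t^1-\bar X_t^1|+\mathcal W_1(\mu_t^{(N)},\mu_t^\vartheta))\cdot C_2$ with $C_1,C_2$ having uniformly bounded $L^2$ norm thanks to Lemmas \ref{lem: moment bound} and \ref{lem : puissances limite}. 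Cauchy--Schwarz plus dominated convergence on $[0,T]$ close the argument.

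\emph{Main obstacle.} The nontrivial point is that $G_\ell$ is only polynomially bounded and locally Lipschitz in $(x,\nu)$, so off-the-shelf weak-convergence results do not apply directly. The coupling forces all estimates to live in $L^2$, where the polynomial-growth factors are controlled by the $r$-th moment bounds uniformly in $N$ and $t$, and the vanishing of $\mathcal W_1(\mu_t^{(N)},\mu_t^\vartheta)$ can be imported from Fournier--Guillin without needing an explicit rate.
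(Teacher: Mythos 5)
Your second part (the convergence $N^{-1}\mathbb I_{\mathcal E^N}(\vartheta)\to\mathbb I_{\mathcal G}(\vartheta)$) is sound and is in substance the paper's own route: the paper packages the synchronous coupling, the Fournier--Guillin bound and the Cauchy--Schwarz splitting into Lemma \ref{lem: conv prob}, while you carry it out by hand and without rates, which indeed suffices for this statement. The problem is in the first part. To change the integrating measure from $\mu_t^\vartheta$ to $\mu_t^{\vartheta'}$ you invoke Kantorovich duality, asserting that the test function is ``Lipschitz after freezing $\vartheta$''. It is not: the test function is $x\mapsto G_\ell G_{\ell'}^\top(\vartheta;t,x,\mu_t^{\vartheta})$, a product of two factors which grow linearly in $x$ and are themselves only locally Lipschitz (since $c^{-1/2}$ is Lipschitz and $\partial_{\vartheta_\ell}b$ has linear growth, the product $c^{-1/2}\partial_{\vartheta_\ell}b$ already has a local Lipschitz constant growing in $|x|$); the product then grows quadratically with a polynomially growing local Lipschitz constant. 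Hence the duality bound $|\int\phi\,d(\mu_t^\vartheta-\mu_t^{\vartheta'})|\le \|\phi\|_{\mathrm{Lip}}\,\mathcal W_1(\mu_t^\vartheta,\mu_t^{\vartheta'})$ is unavailable. This is exactly the ``only locally Lipschitz'' obstruction you flag yourself in your closing paragraph, but you apply the remedy only to the convergence part. Note also that the natural repair by truncation (cut at $|x|\le R$, control the tail by moments, optimize over $R$) only yields H\"older continuity with exponent strictly less than $1$, so it cannot recover the claimed Lipschitz property.

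The fix is to use, for the measure-change step, the same device you use in the second part: realize $\mu_t^\vartheta$ and $\mu_t^{\vartheta'}$ as the laws of the two McKean--Vlasov solutions $\overline X_t^{1,\vartheta}$ and $\overline X_t^{1,\vartheta'}$ of \eqref{eq: def coupling product}, driven by the same Brownian motion and the same initial condition, and write both integrals as expectations under the common measure $\PP_\vartheta^N$. Then, with $a$ the relevant growth exponent,
\begin{equation*}
\Big|\E_{\PP_\vartheta^N}\big[\phi(\overline X_t^{1,\vartheta})-\phi(\overline X_t^{1,\vartheta'})\big]\Big|
\le C\,\E_{\PP_\vartheta^N}\big[|\overline X_t^{1,\vartheta}-\overline X_t^{1,\vartheta'}|^2\big]^{1/2}\,
\E_{\PP_\vartheta^N}\big[\big(1+|\overline X_t^{1,\vartheta}|^{a}+|\overline X_t^{1,\vartheta'}|^{a}\big)^2\big]^{1/2}
\le C|\vartheta-\vartheta'|,
\end{equation*}
where the first factor is controlled by the coupling estimate \eqref{eq: coupling 2} (equivalently \eqref{eq: anticipate}) and the second by Lemma \ref{lem : puissances limite}. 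Equivalently, what your argument really needs is $\mathcal W_2$- (in fact $\mathcal W_r$- for any $r$) Lipschitz continuity of $\vartheta\mapsto\mu_t^\vartheta$, which the pathwise coupling bound delivers but which Proposition \ref{prop: smoothness parameter McKean} as stated (only $\mathcal W_1$) does not. This coupling-plus-Cauchy--Schwarz argument is precisely how the paper proves the Lipschitz continuity of $\vartheta\mapsto\mathbb I_{\mathcal G}(\vartheta)$.
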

The proof is given in Section \ref{app: proof prop: fisher cont}.

\subsection{Identifiability and non-degeneracy of the Fisher information}  \label{sec: identif and non-deg}

\subsubsection*{Motivation}

In the preceding section, we have built $\mathcal E^N$ and $\mathcal G^{\otimes N}$ (equivalently $\mathcal G$) as possibly redundant, in the sense that the mappings $\vartheta \mapsto \PP_\vartheta^N$ and $\vartheta \mapsto \overline{\PP}_\vartheta$ are not necessarily one-to-one on $\Theta$. Having a well-posed parametrisation is required since we wish to have at least consistent estimators. Arguing asymptotically, we only need to work in the limit model $\mathcal G$.\\ 

Also, asymptotic identifiability is somehow linked to the non-degeneracy of the (normalised) Fisher information matrix $\mathbb I_{\mathcal G}$. Following 
\cite{rothenberg1971identification}, see also  \cite{Tse1973}, we say that a point $\vartheta$ in (the interior of) $\Theta$ is {\it regular} if $\vartheta' \mapsto \mathbb I_{\mathcal G}(\vartheta')$ has constant rank in a neighbourhood of $\vartheta$ and the experiment $\mathcal G$ is called {\it locally identifiable} at $\vartheta$ if the mapping $\vartheta' \mapsto \overline{\PP}_{\vartheta'}$ is injective in a neighbourhood of $\vartheta$. We have the following classical result (that goes back at least to Cramer \cite{cramer1946mathematical}):

\begin{prop}[Theorem 1 in \cite{rothenberg1971identification}]  \label{prop: tse}
If $\vartheta$ is regular, then $\mathcal G$ is locally identifiable at $\vartheta$ if and only if $\mathbb I_{\mathcal G}(\vartheta)$ has full rank.
\end{prop}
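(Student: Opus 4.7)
The plan is to prove both implications via the quadratic-mean differentiability of the root likelihood $h(\vartheta';\cdot) := \sqrt{d\overline{\PP}_{\vartheta'}/d\mathbb W} \in L^2(\mathbb W)$, where $\mathbb W$ is the $N=1$ analogue of the dominating measure $\mathbb W^N$. Under Assumptions \ref{ass: init condition}--\ref{hyp : reg sigma b}, the one-observation counterpart of Proposition~\ref{prop: regularity model} (combined with the strict positivity of the Girsanov density) ensures that $\vartheta' \mapsto h(\vartheta';\cdot)$ is differentiable in $L^2(\mathbb W)$, with derivative $\nabla_{\vartheta'} h(\vartheta';\cdot) \in L^2(\mathbb W; \R^p)$ satisfying
$$\mathbb I_\mathcal G(\vartheta') = 4\int \nabla_{\vartheta'}h(\vartheta';X)\, \nabla_{\vartheta'}h(\vartheta';X)^\top\, \mathbb W(dX).$$
Consequently, $v \in \ker \mathbb I_\mathcal G(\vartheta')$ is equivalent to $v^\top \nabla_{\vartheta'} h(\vartheta';\cdot) = 0$ in $L^2(\mathbb W)$.

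For the sufficient direction, assuming $\mathbb I_\mathcal G(\vartheta)$ has full rank, the Hellinger expansion coming from $L^2$-differentiability gives, for $\vartheta'$ close to $\vartheta$,
$$H^2(\overline{\PP}_\vartheta, \overline{\PP}_{\vartheta'}) = \|h(\vartheta';\cdot) - h(\vartheta;\cdot)\|_{L^2(\mathbb W)}^2 = \tfrac14 (\vartheta'-\vartheta)^\top \mathbb I_\mathcal G(\vartheta)(\vartheta'-\vartheta) + o(|\vartheta'-\vartheta|^2).$$
Positive definiteness of $\mathbb I_\mathcal G(\vartheta)$ then lower-bounds the right-hand side by $c|\vartheta'-\vartheta|^2 > 0$ on a punctured neighbourhood of $\vartheta$, so $\overline{\PP}_{\vartheta'} \neq \overline{\PP}_\vartheta$ and local identifiability follows.

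For the necessary direction, suppose $\mathrm{rank}\, \mathbb I_\mathcal G(\vartheta) = r < p$; by the constant-rank hypothesis this persists on an open neighbourhood $U \ni \vartheta$. Together with the Lipschitz continuity of $\vartheta'\mapsto \mathbb I_\mathcal G(\vartheta')$ (Proposition~\ref{prop: fisher cont}), this makes $\vartheta' \mapsto \ker \mathbb I_\mathcal G(\vartheta')$ a continuous rank-$(p-r)$ distribution on $U$, locally trivialised by, e.g., orthogonally projecting a fixed basis of $\R^p$ onto each fibre. I would then pick $v \in \ker \mathbb I_\mathcal G(\vartheta) \setminus \{0\}$ and extend it to a continuous vector field $V$ on a smaller neighbourhood with $V(\vartheta) = v$ and $V(\vartheta') \in \ker \mathbb I_\mathcal G(\vartheta')$. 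Peano's theorem supplies a $C^1$ curve $\gamma:(-\varepsilon,\varepsilon)\to U$ with $\gamma(0) = \vartheta$ and $\dot\gamma(t) = V(\gamma(t))$. Along $\gamma$, the kernel characterisation gives $\dot\gamma(t)^\top \nabla_{\vartheta'}h(\gamma(t);\cdot) = 0$ in $L^2(\mathbb W)$; the $L^2(\mathbb W)$-valued chain rule then yields $\tfrac{d}{dt}h(\gamma(t);\cdot) = 0$, hence $h(\gamma(t);\cdot) = h(\vartheta;\cdot)$ and $\overline{\PP}_{\gamma(t)} = \overline{\PP}_\vartheta$ for all $t$. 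Since $v\neq 0$ forces $\gamma$ to be non-constant, local identifiability fails.

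The main obstacle will be the construction of the curve $\gamma$: the constant-rank hypothesis is precisely what turns the pointwise condition $V(\vartheta') \in \ker \mathbb I_\mathcal G(\vartheta')$ into a bona fide continuous vector field that Peano can integrate, bypassing any Frobenius-type integrability condition. Without constant rank the equivalence can break down, as in classical counterexamples where $\mathbb I_\mathcal G(\vartheta)$ is singular only at the isolated point $\vartheta$ yet $\vartheta' \mapsto \overline{\PP}_{\vartheta'}$ remains injective. A secondary verification, that the $L^2(\mathbb W)$-valued map $t\mapsto h(\gamma(t);\cdot)$ is strongly differentiable with the pointwise derivative, follows from the $L^2$-continuity of $\vartheta'\mapsto \nabla_{\vartheta'}h(\vartheta';\cdot)$ (a refinement of Definition~\ref{def: regular}) together with the $C^1$ regularity of $\gamma$.
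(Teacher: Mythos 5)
The first thing to note is that the paper does not prove this proposition at all: it is imported verbatim as Theorem 1 of \cite{rothenberg1971identification}, so the only meaningful comparison is with Rothenberg's classical argument. Your proof is, in essence, exactly that argument recast in the quadratic-mean-differentiability framework: sufficiency via a second-order (Hellinger) expansion instead of Rothenberg's mean-value argument on log-densities, and necessity via integrating a continuous kernel vector field of $\mathbb I_{\mathcal G}$ (Peano) and showing the (root) density is constant along the resulting curve. The necessity direction, which is where the constant-rank hypothesis does its work, is structurally sound: under constant rank and the continuity of $\vartheta'\mapsto \mathbb I_{\mathcal G}(\vartheta')$ (Proposition \ref{prop: fisher cont}), the spectral projection onto $\ker \mathbb I_{\mathcal G}(\vartheta')$ is continuous, your field $V$ exists, and the $L^2$-chain-rule argument correctly yields $\overline{\PP}_{\gamma(t)}=\overline{\PP}_\vartheta$ with $\gamma$ non-constant. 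Your remark on the failure without constant rank (isolated singularities of the information with an injective parametrisation) is also the right caveat.

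Two points need tightening. First, a definitional mismatch in the sufficiency direction: the paper defines local identifiability as injectivity of $\vartheta'\mapsto \overline{\PP}_{\vartheta'}$ \emph{on a neighbourhood} of $\vartheta$, i.e.\ pairwise separation of any two nearby parameters, whereas your Hellinger expansion is centred at $\vartheta$ and only separates $\vartheta$ from nearby $\vartheta'$ (which is Rothenberg's weaker notion). This is fixable with the tools you already invoke: full rank persists near $\vartheta$ by Proposition \ref{prop: fisher cont}, and the $L^2$-continuity of $\vartheta'\mapsto \nabla_{\vartheta'}h(\vartheta';\cdot)$ makes the expansion locally uniform in its base point, giving $\|h(\vartheta_2;\cdot)-h(\vartheta_1;\cdot)\|_{L^2(\mathbb W)}\geq \tfrac{1}{2}\sqrt{c}\,|\vartheta_2-\vartheta_1|-o(|\vartheta_2-\vartheta_1|)$ uniformly for $\vartheta_1,\vartheta_2$ in a small ball; but as written the step ``local identifiability follows'' is not justified for the paper's definition.

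Second, and more seriously for this specific model: your whole proof rests on the identity $\mathbb I_{\mathcal G}(\vartheta')=4\int \nabla_{\vartheta'}h\,\nabla_{\vartheta'}h^{\top}\,d\mathbb W$, which you attribute to ``the one-observation counterpart of Proposition \ref{prop: regularity model}''. That proposition concerns $\mathcal E^N$, whose log-likelihood \eqref{log-likelihood Girsanov} depends on $\vartheta$ only through the explicit first argument of $b$ (the empirical measure carries no $\vartheta$-dependence). In $\mathcal G$, the density \eqref{log-likelihood Girsanov product} depends on $\vartheta$ \emph{also through} $\mu_t^{\vartheta}$, so the $L^2$-derivative $\nabla_{\vartheta'}h$ is a total derivative containing the sensitivity of the mean-field law, while the matrix $\mathbb I_{\mathcal G}(\vartheta)$ of the paper is defined through partial $\vartheta$-derivatives of $b$ evaluated at $\mu_t^\vartheta$ (it is the limit $N^{-1}\mathbb I_{\mathcal E^N}(\vartheta)\to\mathbb I_{\mathcal G}(\vartheta)$ of Proposition \ref{prop: fisher cont}). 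Identifying the two requires differentiability of $\vartheta\mapsto\mu_t^{\vartheta}$ and an argument that the extra term is accounted for; the paper only provides Lipschitz continuity (Proposition \ref{prop: smoothness parameter McKean}). Your kernel characterisation $v\in\ker\mathbb I_{\mathcal G}(\vartheta')\Leftrightarrow v^\top\nabla_{\vartheta'}h=0$ — the crux of the necessity direction — is exactly where this gap bites. To be fair, the paper's citation of Rothenberg sweeps the same issue under the rug, but a self-contained proof cannot: you should either prove the DQM identity for $\mathcal G$ (with the hidden $\mu^{\vartheta}$-dependence), or state the proposition for the matrix $4\int\nabla h\,\nabla h^\top d\mathbb W$ and separately show it coincides with $\mathbb I_{\mathcal G}$.
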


Unfortunately, there is no hope to obtain a global result that links the two notions unless in very specific cases, see Proposition \ref{prop: eq-identif-nondeg} below. We next give {\it ad-hoc} assumptions that give sufficient (and independent) condition for both identifiability and non-degeneracy of the Fisher information. 

\subsubsection*{An identifiability assumption}

We first have a relatively weak assumption that guarantees global identifiability in 
$\mathcal G$.

\begin{assumption}  \label{ass: identif product experiment}
For all $\vartheta \in \Theta$, for $\overline{\PP}_\vartheta$-almost all $\omega$, for all $\vartheta' \neq \vartheta$, the func\-tions $t \mapsto b(\vartheta; t,X_t(\omega),\mu^{\vartheta}_t)$ and $t \mapsto b(\vartheta'; t,X_t(\omega),\mu^{\vartheta'}_t)$ are not $dt$-a.e. equal. 
\end{assumption}

Assumption 
\ref{ass: identif product experiment} is relatively standard in the literature of statistics of random processes and minimal (see {\it e.g.} \cite{genon1993estimation} in a somewhat analogous context). Indeed, by Girsanov's theorem, for two different parameters $\vartheta, \vartheta' \in \Theta$, the laws $\overline{\PP}_\vartheta$ and $\overline{\PP}_{\vartheta'}$ are absolutely continuous and 
\begin{align*}
\log \frac{d\overline{\PP}_\vartheta}{d\overline{\PP}_{\vartheta'}}(X)&  =  \int_0^T  ((c^{-1}b)(\vartheta; s,X_s,\mu^{\vartheta}_s)- (c^{-1}b)(\vartheta'; s,X_s,\mu^{\vartheta}_s))^{\top} dX^i_s\\
&-\frac{1}{2} \int_0^T (|(c^{-1/2}b)(\vartheta; s,X^{i}_s,\mu^{\vartheta}_s)|^2 - |(c^{-1/2}b)(\vartheta'; s,X^{i}_s,\mu^{\vartheta}_s)|^2)ds.
\end{align*}
Having Assumption \ref{ass: identif product experiment} fail for some $\vartheta'$ implies $\overline{\PP}_{\vartheta}\big(\frac{d\overline{\PP}_\vartheta}{d\overline{\PP}_{\vartheta'}}(X)=1\big)$, {\it i.e.} $\overline{\PP}_{\vartheta}=\overline{\PP}_{\vartheta'}$. Assumption \ref{ass: identif product experiment} may be difficult to check in practice. Yet, it is satisfied as soon as the mapping $\vartheta \mapsto \big((t,x) \mapsto b(\vartheta;t,x, \mu_t^\vartheta)\big)$ is one-to-one. Also, for certain form of the likelihood, we have other criteria, see Proposition \ref{prop: eq-identif-nondeg} below.

\subsubsection*{Non-degeneracy of the information} We need some notation. For any $\vartheta,\vartheta' \in \Theta$ such that the segment $[\vartheta,\vartheta'] =\{\vartheta+\lambda(\vartheta'-\vartheta), \lambda \in [0,1]\} \subset \Theta$ and a function $\phi$ defined on $\Theta$, we set
$$\phi([\vartheta, \vartheta']) = \int_0^1\phi(\vartheta+\lambda(\vartheta'-\vartheta))d\lambda.$$ 
\begin{defi} \label{def: non deg}
The statistical experiment $\mathcal G$ is non-degenerate if
\begin{equation} \label{eq: strong nondeg}
  \inf_{[\vartheta,\vartheta'] \subset\Theta} \mathsf{det}\,\EE_{\overline{\PP}_\vartheta} \big[\nabla_{\vartheta} \overline{\ell}^1([\vartheta, \vartheta']) \nabla_{\vartheta} \overline{\ell}^1([\vartheta, \vartheta'])^{\top}\big] >0,
\end{equation}
where $ \mathsf{det}$ denotes the determinant.
\end{defi}
Equivalently, we can rewrite \eqref{eq: strong nondeg} as 
$$ \inf \mathsf{det}\,\Big(\sum_{j=1}^d\; \int_0^T \int_{\RR^d} \nabla_{\vartheta} (c^{-1/2}b)^j([\vartheta,\vartheta'];t,x,\mu^{\vartheta}_t)\nabla_{\vartheta} (c^{-1/2}b)^j([\vartheta, \vartheta'];t,x,\mu^{\vartheta}_t)^{\top}\mu_t^\vartheta(dx)dt\Big) >0,
$$ 
where the infimum is taken over all segments $[\vartheta,\vartheta'] \subset \Theta$.
%
Obviously, if $\mathcal G$ is non-degenerate,  taking $\vartheta = \vartheta'$, 
Definition \ref{def: non deg} boils down to 
\begin{equation} \label{eq: weak non deg}
\inf_{\vartheta \in \Theta} \mathsf{det}\,\mathbb I_{\mathcal G}(\vartheta) >0
\end{equation}
{\it i.e.} $\vartheta \mapsto \mathbb I_{\mathcal G}(\vartheta) $ has full rank uniformly in $\vartheta$
and we find back the usual non-degeneracy of the Fisher information. The somewhat stronger non-degeneracy criterion that we pick in Definition \ref{def: non deg} enables us to check the assumptions of the theory of Ibragimov and Hasminski for obtaining sharp properties for the maximum likelihood estimator (see in particular Step 2 of the proof of Theorem \ref{thm: MLE prop} in Section \ref{sec: proof of theorem mle} below). In explicit examples, proving  \eqref{eq: strong nondeg} is no more difficult than proving \eqref{eq: weak non deg}, see Section \ref{sec: examples} below.

\subsubsection*{Checking \eqref{eq: strong nondeg} or \eqref{eq: weak non deg} in practice}


A special difficulty for the statistical analysis of  $\mathcal E^N$ or  rather $\mathcal G$ lies in the asymptotic form \eqref{eq : diff limite} with the presence of $(\mu_t^\vartheta)_{0 \leq t \leq T}$ in the drift, which is never explicit, except in very special cases with a specific moment structure in the measure dependence, see Section \ref{sec: examples} below.\\

It is noteworthy that 
\eqref{eq: strong nondeg} can usually be tested in a simple way given an explicit parametrisation. Indeed, 
Definition \ref{def: non deg} is equivalent to show that for every segment $[\vartheta, \vartheta'] \subset \Theta$, 
$$ 
\inf_{[\vartheta, \vartheta'] \subset \Theta}\min_{|z| = 1}  \sum_{j=1}^d \int_0^T \; \int_{\R^d}(\nabla_{\vartheta} \big(c^{-1/2}b)^j([\vartheta, \vartheta'];t,x,\mu^{\vartheta}_t)^{\top} z\big)^2 \mu^{\vartheta}_t(dx) dt >0.$$
Under Assumptions  \ref{ass: init condition}, \ref{reg sigma}, \ref{reg b}, we have that $\mu_t^\vartheta(dx) = \mu_t^\vartheta(x)dx$ is absolutely continuous on $\R^d$ for $t>0$, and we may pick a version $\mu_t^\vartheta$ of the density that is continuous and positive on $\R^d$. This follows from classical Gaussian tail estimates for the solution of parabolic equations. We refer for example to Corollary 8.2.2 of \cite{BoKry}. 
By a simple continuity argument,
it is then sufficient to show that there cannot exist a segment $[\vartheta, \vartheta'] \subset \Theta$ and some $|z|=1$, such that the function
$$x \mapsto   \int_0^T \sum_{j = 1}^d\big(\nabla_{\vartheta} (c^{-1/2}b)^j([\vartheta, \vartheta'];t,x,\mu^{\vartheta}_t)^{\top} z\big)^2dt$$
vanishes asymptotically, or, as soon as we have continuity in $t$ as $t \rightarrow 0$, if one of the functions
$$x \mapsto  \nabla_{\vartheta}(c^{-1/2}b)^j([\vartheta, \vartheta'];0,x,\mu_0)^{\top} z,\;\;j = 1,\ldots, d$$
does not identically vanishes. This last criterion has the advantage to avoid the term $\mu_t^\vartheta$ for $t>0$. We gather these observations in the following:


\begin{prop} \label{prop: criterion fisher}
Work under Assumptions \ref{ass: init condition}, \ref{reg sigma}, \ref{reg b} and \ref{hyp : reg sigma b}. 
Assume moreover
that the functions
$$t \mapsto \nabla_{\vartheta} (c^{-1/2}b)^j([\vartheta, \vartheta'];t,x,\mu^{\vartheta}_t),\;\;j = 1,\ldots, d$$
are all continuous at $t = 0$ for every $[\vartheta, \vartheta'] \subset \Theta$ and a.e.-almost $x \in \R^d$.\\ 

If, for every $[\vartheta, \vartheta'] \subset \Theta$ and any $z\in \R^p$ with $|z| = 1$, one of the functions 
\begin{equation} \label{eq: continuity}
x \mapsto  \nabla_{\vartheta}(c^{-1/2}b)^j([\vartheta, \vartheta'];0,x,\mu_0)^{\top} z,\;\;\;j = 1,\ldots, d
\end{equation}
does not identically vanishes, then $\mathcal G$ is non-degenerate in the sense of Definition \ref{def: non deg}.
\end{prop}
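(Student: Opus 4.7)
The plan is to argue by contradiction, exploiting the compactness of $\Theta$ together with the positivity and continuity of the density of $\mu_t^{\vartheta}$ for $t > 0$, along the lines suggested in the paragraph preceding the statement. By the reformulation recalled just above the proposition, \eqref{eq: strong nondeg} is equivalent to
$$\inf Q(\vartheta,\vartheta',z) > 0, \qquad Q(\vartheta,\vartheta',z) = \sum_{j=1}^d \int_0^T \int_{\R^d} \big(\nabla_{\vartheta}(c^{-1/2}b)^j([\vartheta,\vartheta'];t,x,\mu_t^\vartheta)^\top z\big)^2 \mu_t^\vartheta(dx)\, dt,$$
the infimum running over $([\vartheta,\vartheta'], z)$ with $[\vartheta,\vartheta'] \subset \Theta$ and $|z| = 1$.

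First I would show that this infimum is attained. The parameter domain $\{(\vartheta,\vartheta') \in \Theta^2 : [\vartheta,\vartheta'] \subset \Theta\} \times S^{p-1}$ is compact, so it suffices to establish joint continuity of $Q$. Continuity in $z$ is immediate. Continuity in $(\vartheta,\vartheta')$ follows by dominated convergence, combining the polynomial growth bound on $\nabla_{\vartheta} b$ from Assumption \ref{hyp : reg sigma b}, the uniform moment control of Lemma \ref{lem : puissances limite}, the $\mathcal W_1$-Lipschitz continuity of $\vartheta \mapsto \mu_t^{\vartheta}$ granted by Proposition \ref{prop: smoothness parameter McKean}, and the Lipschitz-in-$(x,\nu)$ estimate of Assumption \ref{hyp : reg sigma b}. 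A minimiser $(\vartheta^\star,\vartheta'^\star,z^\star)$ therefore exists.

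Suppose for contradiction that $Q(\vartheta^\star,\vartheta'^\star,z^\star) = 0$. Then for $dt$-a.e.\ $t \in (0,T]$ and every $j \in \{1,\ldots,d\}$ the inner space-integral vanishes. By Corollary 8.2.2 of \cite{BoKry}, for each $t > 0$ the measure $\mu_t^{\vartheta^\star}$ admits a continuous strictly positive density on $\R^d$; moreover, by Assumption \ref{hyp : reg sigma b} the map $x \mapsto \nabla_{\vartheta}(c^{-1/2}b)^j([\vartheta^\star,\vartheta'^\star];t,x,\mu_t^{\vartheta^\star})^\top z^\star$ is continuous. These two facts together force this map to vanish pointwise on $\R^d$ for every $j$ and for a set of times $t > 0$ accumulating at $0$.

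Picking a sequence $t_n \downarrow 0$ of such good times and invoking the continuity-at-$t = 0$ assumption of the proposition, I pass to the limit to obtain
$$\nabla_{\vartheta}(c^{-1/2}b)^j([\vartheta^\star,\vartheta'^\star];0,x,\mu_0)^\top z^\star = 0$$
for a.e.\ $x \in \R^d$ and every $j$, where we used $\mu_{t=0}^{\vartheta^\star} = \mu_0$. Continuity in $x$ (again Assumption \ref{hyp : reg sigma b}) upgrades this to an identity on $\R^d$, contradicting the hypothesis \eqref{eq: continuity} applied to $[\vartheta^\star,\vartheta'^\star]$ and $z^\star$. The main technical obstacle I anticipate lies in the joint continuity of $Q$ used at the compactness step: chaining the $\mathcal W_1$-continuity of $\mu_t^\vartheta$ with polynomial growth and Lipschitz estimates for $\nabla_\vartheta b$ requires careful uniform-in-$t$ moment bookkeeping, although all the required ingredients are already available in Lemma \ref{lem : puissances limite}, Assumption \ref{hyp : reg sigma b} and Proposition \ref{prop: smoothness parameter McKean}.
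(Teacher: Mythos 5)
Your proof is correct and follows essentially the same route as the paper's own argument, which is the discussion immediately preceding the proposition: the reformulation of \eqref{eq: strong nondeg} as an infimum of a quadratic form over segments and unit vectors, a compactness/continuity argument to reduce to a minimising segment and direction, the positivity and continuity of the density of $\mu_t^\vartheta$ for $t>0$ to turn the vanishing of the integral into pointwise vanishing in $x$, and the continuity at $t=0$ to pass to the criterion involving only $\mu_0$. The joint-continuity step you flag as the main technical obstacle is indeed the only part requiring work, and it is handled exactly as in the paper's proof of Proposition \ref{prop: fisher cont} via the couplings of Lemma \ref{lem: couplings}, so nothing essential is missing.
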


We specifically apply this criterion in the examples Section \ref{sec: examples} and check that the criterion \eqref{eq: continuity} is particularly simple to establish when the dependence in the measure argument of the function $b$ is of the form \eqref{eq: linear}.

\subsubsection*{A case of equivalence between global identifiability and non-degeneracy of the information}
We revisit Theorem 3 in \cite{rothenberg1971identification} to obtain the following criterion:
\begin{prop} \label{prop: eq-identif-nondeg}
Work under Assumptions \ref{ass: init condition}, \ref{reg sigma}, \ref{reg b} and \ref{hyp : reg sigma b}.
Assume that the log-likelihood $\ell^N(\vartheta; X^{(N)})$ in $\mathcal E^N$ defined by \eqref{log-likelihood Girsanov}  has the form
\begin{equation} \label{eq: rep exp}
\ell^N(\vartheta,; X^N)  = \vartheta^\top G^N(X^{(N)}) + \vartheta^\top H^N(X^{(N)}) \vartheta,
\end{equation}
where $G^N$ and $H^N$ are functions of the trajectory $X^{(N)}$ with values in $\R^p$ and $\R^p \otimes \R^p$ respectively, and $(H^N)^\top = H^N$ is symmetric. If $\Theta_0 \subset \Theta$ is a convex set such that  $\mathbb I_{\mathcal G}(\vartheta)$ is non-singular for every $\vartheta \in \Theta_0$, then, both $(\mathcal E^N)_{N \geq 1}$ and $\mathcal G$ are identifiable on $\Theta_0$.
\end{prop}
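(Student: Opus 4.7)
The first observation is that the quadratic representation \eqref{eq: rep exp} of $\ell^N$, combined with the explicit stochastic integral form \eqref{log-likelihood Girsanov} and the invertibility of $c$, forces the drift to be affine in the parameter,
\[
b(\vartheta; t, x, \nu) = B_0(t, x, \nu) + B_1(t, x, \nu)\vartheta,
\]
for some measurable $B_0, B_1$: the piece $\sum_i \int c^{-1}b(\vartheta;\cdot)^\top dX^i$ is linear in $\vartheta$ for almost every trajectory precisely when $b$ is affine. The strategy is then identical for both experiments: if two parameter values produce the same law, the log-likelihood ratio vanishes $\mathbb W^N$-almost surely; decomposing this ratio as a stochastic integral plus a finite-variation term and using uniqueness of the semimartingale decomposition forces the martingale's quadratic variation to be zero. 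After taking expectations, that quadratic variation is exactly $v^\top \mathbb I(\vartheta_0) v$ with $v = \vartheta_0 - \vartheta_1$, and non-singularity of the Fisher information closes the argument.

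\textbf{Identifiability of $\mathcal G$.} Assume for contradiction $\overline{\PP}_{\vartheta_0} = \overline{\PP}_{\vartheta_1}$ for $\vartheta_0 \neq \vartheta_1$ in $\Theta_0$. Since $\mu_t^\vartheta$ is the marginal of $\overline{\PP}_\vartheta$ at time $t$, the two curves of marginals coincide: $\mu_t^{\vartheta_0} = \mu_t^{\vartheta_1} =: \mu_t$. Setting $v = \vartheta_0 - \vartheta_1$, the affine form of $b$ together with the common measure argument reduces $\overline{\ell}^1(\vartheta_0;X) - \overline{\ell}^1(\vartheta_1;X)$ to
\[
-\int_0^T \bigl(c^{-1} B_1(t, X_t, \mu_t) v\bigr)^\top dX_t + \tfrac{1}{2}\int_0^T v^\top B_1^\top c^{-1}\bigl(2 B_0 + B_1(\vartheta_0 + \vartheta_1)\bigr) dt,
\]
which must vanish $\mathbb W^1$-a.s. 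Under $\mathbb W^1$ the first integral is a continuous local martingale and the second is of finite variation; uniqueness of the Doob--Meyer decomposition forces the quadratic variation $\int_0^T |c^{-1/2} B_1(t, X_t, \mu_t) v|^2 dt$ to be zero $\mathbb W^1$-a.s., hence $\overline{\PP}_{\vartheta_0}$-a.s.\ by equivalence. Taking $\overline{\PP}_{\vartheta_0}$-expectation and identifying the resulting integral with the expression for $v^\top \mathbb I_{\mathcal G}(\vartheta_0) v$ computed in Section \ref{subsection : syst interacting limit} yields $v^\top \mathbb I_{\mathcal G}(\vartheta_0) v = 0$, contradicting the non-singularity hypothesis.

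\textbf{Identifiability of $(\mathcal E^N)_{N \geq 1}$ and main obstacle.} The same computation carried out on $\ell^N$ under $\mathbb W^N$---where the $X^i$ are independent continuous local martingales with quadratic covariation $c(t, X^i_t) dt$---shows that $\PP_{\vartheta_0}^N = \PP_{\vartheta_1}^N$ forces $\sum_{i=1}^N \int_0^T |c^{-1/2} B_1(t, X_t^i, \mu_t^{(N)}) v|^2 dt = 0$ $\mathbb W^N$-a.s., and then $v^\top \mathbb I_{\mathcal E^N}(\vartheta_0) v = 0$ after $\PP_{\vartheta_0}^N$-expectation. By exchangeability this expectation equals $N$ times $\mathbb E_{\PP_{\vartheta_0}^N}[\int_0^T |c^{-1/2} B_1(t, X_t^1, \mu_t^{(N)}) v|^2 dt]$, so dividing by $N$ and passing to the limit via Proposition \ref{prop: fisher cont} would give $v^\top \mathbb I_{\mathcal G}(\vartheta_0) v = 0$, again a contradiction---provided the same pair $(\vartheta_0, \vartheta_1)$ violates identifiability for a sequence of $N$'s going to infinity. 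The main obstacle is thus to rule out a pathological scenario in which identifiability fails only for finitely many $N$: this is handled either by reading the conclusion as identifiability for all sufficiently large $N$, or by extracting along a putative failing subsequence convergent $\vartheta_0^{N_k} \to \vartheta^\star \in \overline{\Theta_0}$ and unit directions $v^{N_k}/|v^{N_k}| \to u^\star$ and passing to the limit in the identity $(u^{N_k})^\top N_k^{-1} \mathbb I_{\mathcal E^{N_k}}(\vartheta_0^{N_k}) u^{N_k} = 0$, which produces $(u^\star)^\top \mathbb I_{\mathcal G}(\vartheta^\star) u^\star = 0$ and contradicts non-singularity (assuming the latter extends to $\overline{\Theta_0}$, which is automatic when $\Theta_0$ is already closed).
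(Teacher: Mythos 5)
Your mechanism (vanishing log-likelihood ratio $\Rightarrow$ vanishing martingale part $\Rightarrow$ vanishing quadratic variation $\Rightarrow$ degenerate Fisher quadratic form) is a legitimate route, but it rests entirely on the claim in your first paragraph: that representation \eqref{eq: rep exp} forces $b(\vartheta;t,x,\nu)=B_0(t,x,\nu)+B_1(t,x,\nu)\vartheta$. That claim is asserted, not proved, and the one-line justification you give is not available. The hypothesis \eqref{eq: rep exp} is an identity between random variables at the terminal time $T$ only; from the terminal value of $\ell^N$ you cannot separate the stochastic-integral piece from the $dt$-piece (the functionals $G^N,H^N$ carry no semimartingale decomposition of their own), so "the piece $\sum_i\int (c^{-1}b)^\top dX^i$ is linear in $\vartheta$" does not follow: only the \emph{sum} of the two pieces is known to be quadratic, and cancellations between them are not excluded. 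The affine claim is load-bearing twice over: it lets you write the likelihood ratio through a single matrix $B_1$, and, more importantly, it makes $\nabla_\vartheta(c^{-1/2}b)$ constant in $\vartheta$, which is what allows you to identify the expected quadratic variation with $v^\top\mathbb I_{\mathcal G}(\vartheta_0)v$ at the single point $\vartheta_0$. Without it, your computation combined with a Taylor expansion of $b(\vartheta_0)-b(\vartheta_1)$ only yields degeneracy of the segment-averaged matrix $\E_{\overline{\PP}_{\vartheta_0}}\big[\nabla_\vartheta\overline{\ell}^1([\vartheta_1,\vartheta_0])\nabla_\vartheta\overline{\ell}^1([\vartheta_1,\vartheta_0])^\top\big]$ --- the object of Definition \ref{def: non deg} --- which is a strictly stronger non-degeneracy than the pointwise non-singularity of $\mathbb I_{\mathcal G}$ assumed in the proposition. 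A telltale sign of the gap is that your argument never uses the convexity of $\Theta_0$, which is a hypothesis.

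The paper closes exactly this gap by exploiting the quadratic structure algebraically rather than structurally: since $\vartheta\mapsto\ell^N(\vartheta;X^{(N)})$ is a quadratic polynomial with symmetric $H^N$, one has the \emph{exact} identity $\ell^N(\vartheta_0)-\ell^N(\vartheta_1)=\nabla_\vartheta\ell^N(\vartheta^\star)^\top(\vartheta_0-\vartheta_1)$ at the midpoint $\vartheta^\star=\tfrac{1}{2}(\vartheta_0+\vartheta_1)$, which lies in $\Theta_0$ by convexity. Equality of the laws makes the left side vanish almost surely, so the score at $\vartheta^\star$ is degenerate in the direction $\xi=\vartheta_0-\vartheta_1$; taking expectation of its square gives $\xi^\top\mathbb I_{\mathcal E^N}(\vartheta^\star)\xi=0$ for every $N$, and Proposition \ref{prop: fisher cont} transfers this to $\xi^\top\mathbb I_{\mathcal G}(\vartheta^\star)\xi=0$ in the limit, contradicting non-singularity at $\vartheta^\star\in\Theta_0$. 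No structural claim on $b$ and no semimartingale decomposition are needed. Finally, the "main obstacle" you raise is not one: the paper defines identifiability of $(\mathcal E^N)_{N\geq1}$ as injectivity of $\vartheta\mapsto(\PP_\vartheta^N)_{N\geq1}$ simultaneously in $N$, so a failure produces a single pair with $\PP_{\vartheta_0}^N=\PP_{\vartheta_1}^N$ for \emph{all} $N$, and the limit $N\to\infty$ is always available; your subsequence construction and the passage to $\overline{\Theta_0}$ are unnecessary.
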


By identifiability of the sequence of experiment $(\mathcal E^N)_{N \geq 1}$, we mean injectivity of the mapping $\vartheta \mapsto (\PP_\vartheta^N)_{N \geq 1}$ ({\it i.e.} simultaneously for every $N \geq 1$).
The proof is given in Section \ref{proof prop: eq-identif-nondeg}. In the specific case of McKean type models that date back to \cite{MCKEAN, Sznitman1, tanaka1984limit} and widely used in practice (see e.g. \cite{carmona2018probabilistic, fouque2013systemic} or \cite{kasonga1990maximum} in statistics), we have in some instances a representation like \eqref{eq: rep exp} and explicit formulas for $\mathbb I_{\mathcal G}(\vartheta)$, which gives global identifiability for free as soon as $\mathbb I_{\mathcal G}(\vartheta)$ is non-degenerate.  See the examples in Section \ref{sec: examples}.

%

\section{Main results} \label{sec: main}

\subsection{The LAN property} The local asymptotic normality property of a statistical model characterises its regularity: it expresses the fact that the experiment locally resembles a Gaussian shift in an optimal scale driven by the Fisher information. It has powerful consequences in terms of properties of optimal procedures via the celebrated H\'ajek convolution theorem \cite{hajek1972local}. More precisely the sequence of experiments  $(\mathcal E_N)_{N \geq 1}$ satisfies the LAN property at $\vartheta \in \Theta$ with information rate $N\mathbb I_{\mathcal G}(\vartheta)$ if
\begin{equation} \label{eq: LAN}
\log \frac{d\PP_{\vartheta+(N\mathbb I_\mathcal G(\vartheta))^{-1/2} u}^N}{d\PP_{\vartheta}^N} = u^\top \xi_{\vartheta}^N-\tfrac{1}{2}|u|^2+r_N(\vartheta, u),
\end{equation}
where $\xi_{\vartheta}^N$ converges in distribution under $\PP_\vartheta^N$ to standard Gaussian variable in $\R^p$ and $r_N(\vartheta, u) \rightarrow 0$ in $\PP_\vartheta^N$-probability. Of course, the convergence \eqref{eq: LAN} is meaningful only if $\vartheta+(N\mathbb I_{\mathcal G}(\vartheta))^{-1/2} u \in \Theta$ and is well-defined, {\it i.e.} if $\mathsf{det}\,\mathbb I_\mathcal G(\vartheta) >0$. This is granted for instance for $\vartheta$ in the interior of $\Theta$ for large enough $N$ and under \eqref{eq: weak non deg}. 

\begin{thm} \label{thm: LAN prop}
Work under Assumptions \ref{ass: init condition}, \ref{reg sigma}, \ref{reg b}, \ref{hyp : reg sigma b} and \ref{ass: identif product experiment}. Assume moreover that $\mathcal G$ is non-degenerate according to Definition \ref{def: non deg}. For every $\vartheta$ in (the interior of) $\Theta$, the sequence of experiments  $(\mathcal E^N)_{N \geq 1}$ is locally asymptotically normal at $\vartheta$ with information rate $N\mathbb I_{\mathcal G}(\vartheta)$. 

The same result holds for $(\mathcal G^{\otimes N})_{N \geq 1}$. 
\end{thm}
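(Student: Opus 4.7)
The strategy is the classical one for LAN in regular parametric models: second-order Taylor expand the log-likelihood ratio at the local alternative $\widetilde\vartheta_N(u) := \vartheta + (N\mathbb I_\mathcal G(\vartheta))^{-1/2} u$ around $\vartheta$, show that the suitably rescaled score converges in distribution to a standard Gaussian vector and that the rescaled Hessian, on a shrinking $O(N^{-1/2})$-neighbourhood of $\vartheta$, concentrates on $-\mathbb I_\mathcal G(\vartheta)$. Since $\vartheta$ lies in the interior of $\Theta$ and $\mathbb I_\mathcal G(\vartheta)$ is invertible by non-degeneracy, $\widetilde\vartheta_N(u)\in\Theta$ for $N$ large enough, and Proposition \ref{prop: regularity model} together with Assumption \ref{hyp : reg sigma b} justifies the expansion
\[
\log \frac{d\PP_{\widetilde\vartheta_N(u)}^N}{d\PP_\vartheta^N} = u^\top S_N + \tfrac{1}{2} u^\top H_N(\widetilde\vartheta) u,
\]
with $S_N=(N\mathbb I_\mathcal G(\vartheta))^{-1/2}\nabla_\vartheta \ell^N(\vartheta;X^{(N)})$, $H_N(\widetilde\vartheta)=(N\mathbb I_\mathcal G(\vartheta))^{-1/2}\nabla^2_\vartheta \ell^N(\widetilde\vartheta;X^{(N)})(N\mathbb I_\mathcal G(\vartheta))^{-1/2}$, and $\widetilde\vartheta$ on the segment $[\vartheta,\widetilde\vartheta_N(u)]$.

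For the score, the representation \eqref{eq : likelihood forme martingale} shows that $M^N:=N^{-1/2}\nabla_\vartheta \ell^N(\vartheta;X^{(N)})$ is, under $\PP_\vartheta^N$, a continuous $\R^p$-valued martingale with predictable quadratic variation at time $T$ having $(k,l)$-entry
\[
N^{-1}\sum_{i=1}^N \int_0^T \partial_{\vartheta_k}(c^{-1/2}b)(\vartheta;t,X^i_t,\mu^{(N)}_t)^\top \partial_{\vartheta_l}(c^{-1/2}b)(\vartheta;t,X^i_t,\mu^{(N)}_t)\, dt.
\]
A propagation-of-chaos / LLN argument, in the spirit of the convergence $N^{-1}\mathbb I_{\mathcal E^N}(\vartheta)\to\mathbb I_\mathcal G(\vartheta)$ of Proposition \ref{prop: fisher cont}, yields $\langle M^N\rangle_T\to \mathbb I_\mathcal G(\vartheta)$ in $\PP_\vartheta^N$-probability. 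Since $M^N$ is continuous, the CLT for continuous martingales gives $M^N_T \overset{d}{\longrightarrow} \mathcal N(0,\mathbb I_\mathcal G(\vartheta))$, and non-degeneracy allows us to set $\xi^N_\vartheta:=\mathbb I_\mathcal G(\vartheta)^{-1/2}M^N_T \overset{d}{\longrightarrow}\mathcal N(0,I_p)$.

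For the Hessian, differentiating $\ell^N$ twice and substituting $dX^i_t = b(\vartheta;t,X^i_t,\mu^{(N)}_t)dt + \sigma(t,X^i_t)dB^{i,N,\vartheta}_t$ yields
\[
N^{-1}\nabla^2_\vartheta \ell^N(\widetilde\vartheta) = \mathcal M_N(\widetilde\vartheta) + \mathcal D_N(\widetilde\vartheta,\vartheta) - \mathcal Q_N(\widetilde\vartheta),
\]
where $\mathcal M_N(\widetilde\vartheta)$ is a stochastic integral against the $(B^{i,N,\vartheta})_i$ involving $\nabla^2_\vartheta(c^{-1/2}b)$, of $L^2(\PP_\vartheta^N)$-size $O(N^{-1/2})$ thanks to Assumption \ref{hyp : reg sigma b} and Lemma \ref{lem: moment bound}; $\mathcal D_N(\widetilde\vartheta,\vartheta)$ is a drift-discrepancy term of size $O(|\widetilde\vartheta-\vartheta|)=O(N^{-1/2})$ by the Lipschitz property of Assumption \ref{reg b}; and $\mathcal Q_N(\widetilde\vartheta)$ is the matrix with $(k,l)$-entry analogous to $\langle M^N\rangle_T$ above but evaluated at $\widetilde\vartheta$. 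By the same propagation-of-chaos argument together with the equicontinuity in $\vartheta$ granted by Assumption \ref{hyp : reg sigma b}, $\mathcal Q_N(\widetilde\vartheta)\to \mathbb I_\mathcal G(\vartheta)$ in $\PP_\vartheta^N$-probability uniformly for $\widetilde\vartheta$ in the $O(N^{-1/2})$-neighbourhood of $\vartheta$. Hence $H_N(\widetilde\vartheta)\to -I_p$ in probability, and the expansion collapses to the target form \eqref{eq: LAN} with remainder $r_N(\vartheta,u)\to 0$ in $\PP_\vartheta^N$-probability.

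For $(\mathcal G^{\otimes N})_{N\geq 1}$ the same three-step argument applies, but is substantially simpler: under $\overline{\PP}_\vartheta^{\otimes N}$ the coordinates are iid and the measure argument $\mu^\vartheta_t$ is deterministic, so the rescaled score is a sum of iid $\R^p$-valued martingales and the standard iid CLT and LLN replace the martingale CLT and the propagation-of-chaos LLN invoked above. The main obstacle in the interacting case is precisely the propagation-of-chaos LLN for functionals of the form $N^{-1}\sum_i \int_0^T \phi(\vartheta;t,X^i_t,\mu^{(N)}_t) dt$ with $\phi$ of polynomial growth in $x$ and $\mathfrak m_r(\mu^{(N)}_t)$ and involving derivatives of $b$: a Sznitman-type coupling with the iid system governed by $\overline{\PP}_\vartheta$, combined with the moment control of Lemma \ref{lem: moment bound} and the smoothness of Assumption \ref{hyp : reg sigma b}, is what makes it go through; the closely related uniform control of the Hessian in $\widetilde\vartheta$ on a vanishing neighbourhood is handled by the same Lipschitz bounds.
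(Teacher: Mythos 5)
Your proposal is correct in substance, but it follows a genuinely different route from the paper. You use the textbook LAN recipe: a second-order Taylor expansion of $\vartheta\mapsto\ell^N(\vartheta;X^{(N)})$, a martingale CLT for the rescaled score, and control of the Hessian $\nabla^2_\vartheta\ell^N$ at an intermediate point (martingale part of size $O(N^{-1/2})$, drift-discrepancy of size $O(N^{-1/2})$, and an empirical-information term $\mathcal Q_N(\widetilde\vartheta)$ converging uniformly to $\mathbb I_{\mathcal G}(\vartheta)$ on the shrinking neighbourhood). The paper never forms the Hessian of the log-likelihood: it exploits the fact that, by Girsanov, the log-likelihood ratio is \emph{exactly} a stochastic integral of the drift increment $(c^{-1/2}b)(\vartheta_N+(N\mathbb I)^{-1/2}u_N;\cdot)-(c^{-1/2}b)(\vartheta_N;\cdot)$ minus one half of its squared $L^2$-norm, and Taylor-expands only this drift increment to \emph{first} order, with segment-averaged gradients $\nabla_\vartheta(c^{-1/2}b)([\vartheta,\vartheta'])$. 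Both proofs then rest on the same two pillars you identify: the semimartingale CLT (Jacod--Shiryaev) for the Gaussian part and a coupling/propagation-of-chaos law of large numbers (the paper's Lemma \ref{lem: conv prob}) for all information-type functionals $N^{-1}\sum_i\int_0^T\phi(\vartheta;t,X_t^i,\mu_t^{(N)})dt$. What the paper's route buys: second derivatives of $b$ enter only in remainder bounds rather than as a leading martingale term; the segment-averaged gradients connect directly with the non-degeneracy condition of Definition \ref{def: non deg}; and the proof is carried out along arbitrary sequences $\vartheta_N\rightarrow\vartheta$, $u_N\rightarrow u$, yielding the \emph{uniform} LAN property that is later needed as Condition N1 of Ibragimov--Hasminski in the proof of Theorem \ref{thm: MLE prop}. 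What your route buys: it is the generic argument for smooth parametric models and requires no clever rewriting of the likelihood ratio.

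Two minor repairs you should make. First, for the vector-valued gradient $\nabla_\vartheta\ell^N$ the mean-value theorem with a single intermediate point $\widetilde\vartheta$ is not valid; use the integral form of the Taylor remainder, $\int_0^1(1-\lambda)\nabla^2_\vartheta\ell^N(\vartheta+\lambda(\widetilde\vartheta_N(u)-\vartheta))d\lambda$, which your uniform convergence of $\mathcal Q_N$ over the $O(N^{-1/2})$-neighbourhood handles anyway. Second, the drift-discrepancy bound $O(|\widetilde\vartheta-\vartheta|)$ does not follow from Assumption \ref{reg b} (which is Lipschitz in $(x,\nu)$ uniformly over $\vartheta$) but from Assumption \ref{hyp : reg sigma b}, whose polynomial-growth bound on $\partial_{\vartheta_\ell}b$ gives $|b(\vartheta';\cdot)-b(\vartheta;\cdot)|\leq C|\vartheta'-\vartheta|(1+|x|^{r_1}+\mathfrak m_{r_2}(\nu))$, to be combined with the moment bound of Lemma \ref{lem: moment bound}.
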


Several remarks are in order: {\bf 1)} Theorem \ref{thm: LAN prop} is the most powerful result one can obtain about the structure of $(\mathcal E^N)_{N \geq 1}$ and $(\mathcal G^{\otimes N})_{N \geq 1}$: it tells us that around a given point $\vartheta_0$, if we parametrise locally the experiment via $\vartheta = \vartheta_0 + N^{-1/2}u$ with $u \in \R^p$ being the unknown parameter, then the experiments look like the simplest possible experiment, namely  a Gaussian shift 
$$Y^N = u + \mathbb I_{\mathcal G}(\vartheta_0)^{-1/2}\xi + o(1)$$
where $\xi$ is a standard normal $\mathcal N(0,\mathrm{Id}_{\R^p})$ and $o(1)$ is a small term that vanishes in $\PP_{\vartheta}^N$ or $\overline{\PP}_{\vartheta}^{\otimes N}$ probability, locally uniformly in $u$. {\bf 2)} The fact that both $(\mathcal E^N)_{N \geq 1}$ and $(\mathcal G^{\otimes N})_{N \geq 1}$ share the LAN property with same asymptotic Fisher variance quantifies their asymptotic similarity, see in particular Proposition \ref{prop: indisting}. {\bf 3)} The LAN property has several consequences in terms of strong properties of the maximum likelihood estimator, see Theorem \ref{thm: MLE prop} below. In particular, the first simple consequence is given in terms of exact asymptotic minimax lower bounds: call a centrally symmetric function $w: \R^p \rightarrow [0,\infty)$  such that the sets $\{w < c\}, c>0$ are all convex a {\it polynomial loss function} if it admits a polynomial majorant. 

\begin{cor} \label{cor strong LAN}
In the setting of Theorem \ref{thm: LAN prop}, let $w$ be a polynomial loss function. Then, for any estimator $\widehat \vartheta_N$ in $\mathcal E^N$ and any sufficiently small $\delta >0$, for every $\vartheta$ in (the interior of) $\Theta$ for which $\mathsf{det}\,\mathbb I_{\mathcal G}(\vartheta) >0$, we have
$$\liminf_{N \rightarrow \infty}\sup_{|\vartheta'-\vartheta| \leq \delta} \E_{\PP_{\vartheta'}^N}\big[w\big(N^{1/2}\mathbb I_{\mathcal G}(\vartheta)^{1/2}(\widehat \vartheta_N-\vartheta')\big)\big] \geq (2\pi)^{-p/2}\int_{\R^p} w(x)\exp(-\tfrac{1}{2}|x|^2) dx.$$
The same result holds true for $\widehat \vartheta_N$ in $\mathcal G^{\otimes N}$ replacing $\PP_\vartheta^N$ by $\overline{\mathbb P}_\vartheta^{\otimes N}$.
\end{cor}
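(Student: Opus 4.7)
The plan is to reduce the statement to the classical Hájek--Le Cam asymptotic minimax bound applied to the Gaussian shift limit provided by Theorem \ref{thm: LAN prop}. First I would reparametrise locally around $\vartheta$ by setting $\vartheta^N_u=\vartheta+N^{-1/2}\mathbb I_{\mathcal G}(\vartheta)^{-1/2}u$; since $\vartheta$ is interior and $\mathsf{det}\,\mathbb I_{\mathcal G}(\vartheta)>0$, the set $\{|\vartheta'-\vartheta|\leq\delta\}$ contains $\{\vartheta^N_u:|u|\leq c\delta N^{1/2}\}$ for some $c>0$ once $N$ is large, so for any compact $K\subset\mathbb R^p$ the supremum in the statement dominates, for $N$ large enough, the supremum restricted to $u\in K$.

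Next, I would invoke the LAN expansion \eqref{eq: LAN} (which, by the way it is proved from sums of martingale increments in \eqref{eq : likelihood forme martingale}, actually holds uniformly on compact sets of $u$) to identify the limit model: the localised sequence $(\{\PP^N_{\vartheta^N_u}:u\in\mathbb R^p\})_{N\geq 1}$ converges in the Le Cam sense to the Gaussian shift $\{\mathcal N(u,\mathrm{Id}_p):u\in\mathbb R^p\}$. The Hájek--Le Cam asymptotic minimax theorem (Theorem II.12.1 in Ibragimov--Khasminskii \cite{ibragimov2013statistical}) then gives, for any \emph{bounded} lower-semicontinuous loss $\widetilde w$ with centrally symmetric convex sublevel sets,
$$\liminf_{N\to\infty}\inf_{\widehat\vartheta_N}\sup_{u\in K}\E_{\PP^N_{\vartheta^N_u}}\bigl[\widetilde w\bigl(N^{1/2}\mathbb I_{\mathcal G}(\vartheta)^{1/2}(\widehat\vartheta_N-\vartheta^N_u)\bigr)\bigr]\;\geq\;\E[\widetilde w(\xi)],$$
with $\xi\sim\mathcal N(0,\mathrm{Id}_p)$ and $K$ chosen large enough; the right-hand side is the minimax value of the Gaussian shift experiment for $\widetilde w$, which is attained by the identity estimator $\widehat u=Y$ via Anderson's lemma.

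To promote the bound from bounded to polynomial loss, I would truncate: set $w_M=w\wedge M$, which still has centrally symmetric convex sublevel sets. Since $w\geq w_M$ pointwise, the above yields $\liminf\geq \E[w_M(\xi)]$ for every $M>0$, and monotone convergence as $M\to\infty$ delivers the stated lower bound $(2\pi)^{-p/2}\int_{\R^p}w(x)e^{-|x|^2/2}dx=\E[w(\xi)]$. The same argument transfers verbatim to $\mathcal G^{\otimes N}$ because Theorem \ref{thm: LAN prop} provides an identical LAN expansion under $\overline{\PP}_{\vartheta}^{\otimes N}$ with the same information matrix $\mathbb I_{\mathcal G}(\vartheta)$.

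The step I expect to require the most care is the appeal to the Hájek--Le Cam theorem, since this demands slightly more than pointwise LAN at $\vartheta$: one needs the remainder $r_N(\vartheta,u)\to 0$ in $\PP^N_\vartheta$-probability uniformly for $u$ in compact sets, so that the local experiments genuinely converge in the Le Cam sense. This uniformity is however essentially automatic from the martingale representation \eqref{eq : likelihood forme martingale} together with Lemma \ref{lem: moment bound} and the Lipschitz regularity of $\vartheta\mapsto \nabla_\vartheta(c^{-1/2}b)$ granted by Assumption \ref{hyp : reg sigma b}, so no new ingredient beyond the LAN itself is needed to close the argument.
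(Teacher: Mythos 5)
Your proof is correct and essentially the paper's own argument: the paper simply invokes the Hájek--Le Cam asymptotic minimax theorem (Theorem II.12.1 of \cite{ibragimov2013statistical}) on the strength of Theorem \ref{thm: LAN prop} --- whose proof, as you anticipate, in fact establishes the uniform (sequence-wise) LAN expansion that this theorem requires, so your localisation-plus-limit-experiment reasoning is exactly what is behind the paper's one-line citation. Your only departure is the truncation $w_M = w \wedge M$ followed by monotone convergence, which is harmless but unnecessary, since Theorem II.12.1 in \cite{ibragimov2013statistical} already applies to loss functions admitting a polynomial majorant, i.e.\ precisely to the paper's class of polynomial loss functions.
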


Corollary \ref{cor strong LAN} is a simple application of H\'ajek convolution theorem, given the LAN property of Theorem \ref{thm: LAN prop}, see {\it e.g.} Theorem II.12.1 (an in particular Remark III.12.1) in \cite{ibragimov2013statistical}. It provide with a sharp local asymptotically minimax bound, up to constants. We shall see below that the maximum likelihood estimator achieves this bound.

\subsection{Maximum likelihood estimation and properties}

We elaborate on the properties of the maximum likelihood estimator by relying on (a uniform version of) the LAN property of Theorem \ref{thm: LAN prop}. It implies several fine results that go beyond the usual asymptotic weak expansions given by an {\it ad-hoc} study of the form of the estimator, as is usually the case in the literature.

\begin{thm} \label{thm: MLE prop}
Work under Assumptions \ref{ass: init condition}, \ref{reg sigma}, \ref{reg b}, \ref{hyp : reg sigma b} and \ref{ass: identif product experiment}. Then, for large enough $N$, the solution $\widehat \vartheta_N^{\,\tt{mle}}$ to
\begin{equation} \label{eq: def emv}
\mathcal L^N(\widehat \vartheta_N^{\,\tt{mle}};X^{(N)}) = \sup_{\vartheta \in \Theta} \mathcal L^N(\vartheta; X^{(N)})
\end{equation}
is well-defined. Moreover, the following asymptotic upper bounds are valid:
\begin{enumerate}
\item[(i)] if $\mathcal G$ is non-degenerate in the sense of Definition \ref{def: non deg},
$$\sqrt{N}\big(\widehat \vartheta_N^{\,\tt{mle}}-\vartheta\big) \rightarrow \mathcal N\big(0, \mathbb I_{\mathcal G}(\vartheta)^{-1}\big)$$
in $\PP_\vartheta^N$-distribution as $N \rightarrow \infty$. 
\item[(ii)] For every polynomial loss function $w$ and any $\vartheta$ in the interior of $\Theta$, we have exact local asymptotic minimax optimality:
$$\limsup_{N \rightarrow \infty}\sup_{|\vartheta'-\vartheta| \leq \delta}\E_{\mathbb \PP_{\vartheta'}^N}\big[w\big(N^{1/2}\mathbb I_{\mathcal G}(\vartheta)^{1/2}\big(\widehat \vartheta_N^{\,\tt{mle}}-\vartheta'\big)\big)\big] \rightarrow (2\pi)^{-p/2}\int_{\R^p} w(x)\exp(-\tfrac{1}{2}|x|^2) dx$$
as $\delta \rightarrow 0$. 
\item[(iii)] For every polynomial loss function $w$ and any (non empty) open set $\Theta_0 \subset \Theta$, we have global asymptotic minimax optimality:
$$\mathcal R_w^N(\widehat \vartheta_N^{\,\tt{mle}}; \Theta_0)= \inf_{\widehat \vartheta_N}\mathcal R_w^N(\widehat \vartheta_N; \Theta_0)(1+o(1))$$
as $N \rightarrow \infty$, where
$$\mathcal R_w^N(\widehat \vartheta_N; \Theta_0) = \sup_{\vartheta \in \Theta_0}\E_{\PP_\vartheta^N}\big[w\big(N^{1/2}\mathbb I_{\mathcal G}(\vartheta)^{1/2}(\widehat \vartheta_N-\vartheta)\big)\big].$$
\end{enumerate}
\end{thm}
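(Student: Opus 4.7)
The plan is to apply the Ibragimov--Hasminski programme for maximum likelihood estimation (Chapter III of \cite{ibragimov2013statistical}) to the regular experiments $(\mathcal E^N)_{N \geq 1}$ and $(\mathcal G^{\otimes N})_{N \geq 1}$, upgrading the LAN expansion of Theorem~\ref{thm: LAN prop} via moment controls on the renormalised likelihood ratio. Existence of $\widehat \vartheta_N^{\,\tt{mle}}$ follows from compactness of $\Theta$ and a.s. continuity of $\vartheta \mapsto \mathcal L^N(\vartheta; X^{(N)})$ granted by Proposition~\ref{prop: regularity model}; uniqueness for $N$ large is recovered a posteriori from consistency combined with the local quadratic approximation of the log-likelihood readable on \eqref{eq: LAN}.

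The core of the argument consists in studying the rescaled likelihood ratio
$$
Z_N^\vartheta(u) = \frac{\mathcal L^N(\vartheta+\varphi_N u;X^{(N)})}{\mathcal L^N(\vartheta;X^{(N)})},\qquad \varphi_N = N^{-1/2}\mathbb I_{\mathcal G}(\vartheta)^{-1/2},
$$
as a random field indexed by $u$ such that $\vartheta+\varphi_N u \in \Theta$. I would establish, locally uniformly in $\vartheta$, the two cornerstone Ibragimov--Hasminski estimates:
\begin{itemize}
\item[(a)] a H\"older regularity $\EE_{\PP_\vartheta^N}[|Z_N^\vartheta(u_1)^{1/2}-Z_N^\vartheta(u_2)^{1/2}|^{2m}] \leq C_m |u_1-u_2|^{2m}$ for every integer $m \geq 1$;
\item[(b)] a Gaussian-type decay $\EE_{\PP_\vartheta^N}[Z_N^\vartheta(u)^{1/2}] \leq \exp(-\kappa |u|^2)$ in the range $|u| \leq c\sqrt{N}$, complemented by a polynomial bound at larger scales.
\end{itemize}
Both reduce to Gaussian/Novikov-type computations on the stochastic integral representation \eqref{eq : likelihood forme martingale}: the Hellinger integral $\EE_{\PP_\vartheta^N}[Z_N^\vartheta(u)^{1/2}]$ factorises into an exponential martingale times the exponential of (minus) one eighth of the integrated squared $c^{-1/2}$-weighted drift increment. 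Taylor-expanding along the segment $[\vartheta,\vartheta+\varphi_N u]$ produces the averaged Gram matrix appearing in Definition~\ref{def: non deg}; the moment bound of Lemma~\ref{lem: moment bound} controls the resulting space integrals and Proposition~\ref{prop: smoothness parameter McKean} handles the variation of the running measure along the segment.

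Armed with (a), (b) and the LAN expansion, I invoke the main theorems of Chapter~III of \cite{ibragimov2013statistical} to obtain simultaneously assertion (i), namely $\sqrt{N}(\widehat \vartheta_N^{\,\tt{mle}}-\vartheta) \Rightarrow \mathcal N(0,\mathbb I_{\mathcal G}(\vartheta)^{-1})$ under $\PP_\vartheta^N$, together with convergence in $L^q$ of the normalised error for every $q \geq 1$. The local minimax statement (ii) is then a matching of two bounds: the lower bound is precisely Corollary~\ref{cor strong LAN}, whereas the upper bound follows from the just-obtained moment convergence combined with locally uniform versions of (a)--(b). For (iii), I would cover the open set $\Theta_0$ by finitely many small balls along a fine enough grid, apply (ii) uniformly on each ball using Lipschitz continuity of $\vartheta \mapsto \mathbb I_{\mathcal G}(\vartheta)$ from Proposition~\ref{prop: fisher cont}, and match the resulting supremum against the envelope coming again from Corollary~\ref{cor strong LAN}.

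The delicate part is the \emph{uniform} derivation of (b). The measure argument in $b(\vartheta+\varphi_N u;\cdot,\mu^{\vartheta+\varphi_N u})$ forbids a naive first-order linearisation, and it is precisely here that the segment-based formulation \eqref{eq: strong nondeg} of non-degeneracy becomes indispensable: expanding $b$ along $[\vartheta,\vartheta+\varphi_N u]$ produces the averaged gradient $\nabla_\vartheta(c^{-1/2}b)([\vartheta,\vartheta+\varphi_N u];\cdot)$ whose Gram matrix is bounded below uniformly over all such segments by Definition~\ref{def: non deg}. Controlling the effect of the $\vartheta'$-dependence of $\mu_t^{\vartheta'}$ inside the Taylor remainder via Proposition~\ref{prop: smoothness parameter McKean}, and taking $N$ large enough so that the subleading $O(\varphi_N|u|)$ perturbations cannot erase the leading quadratic, delivers (b) with $\kappa$ uniform over $\Theta$.
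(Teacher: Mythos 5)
Your proposal follows essentially the same route as the paper's proof: one verifies the Ibragimov--Hasminski Chapter III conditions --- the uniform LAN expansion from Theorem~\ref{thm: LAN prop} (Condition N1), non-degeneracy in the sense of Definition~\ref{def: non deg} (Condition N2), your H\"older estimate (a) (Condition N3) and your tail estimate (b) (Condition N4) --- and then cites Theorem~III.1.1, Corollary~III.1.1 and Theorem~III.1.3 of \cite{ibragimov2013statistical} to obtain (i), (ii) and (iii) respectively. The only small discrepancy is that the pure Gaussian decay claimed in (b) is not attainable by this argument --- the random Gram matrix built on $\mu^{(N)}_t$ concentrates around its deterministic limit only at a polynomial rate in $N$, so what one actually proves (as the paper does, combining the segment non-degeneracy with the deviation bound of Lemma~\ref{lem: conv prob} and the constraint $|u|\le CN^{1/2}$) is the bound $\sup_u |u|^m\,\E_{\PP_\vartheta^N}\big[Z_N^\vartheta(u)^{1/2}\big]\le C$ for every $m$, which is exactly what Condition N4 requires and what your own deviation argument delivers.
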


Some further remarks: {\bf 1)} We find back the classical asymptotic properties (i) of the maximum likelihood estimator that are given in the literature, but the result is appended by a much stronger convergence in (ii), that matches in particular the lower bound of Corollary \ref{cor strong LAN}. {\bf 2)} We finally obtain global asymptotic minimax optimality by (iii), which is the parametric analog (in a much more precise way) of our minimax results of Section 4 in \cite{della2022nonparametric} in the nonparametric case.

\section{Examples} \label{sec: examples}

In this section, we elaborate on specific examples that appear in the literature and in applications. We first revisit the linear McKean model studied at length in \cite{kasonga1990maximum}. We slightly extend in Section \ref{sec: kasonga simple} his example (1.3) from $p=2$ to $p=3$. In Section \ref{sec: kasonga extended}, we develop an example of a generalised linear form and show in particular how our identifiability  and non-degeneracy criteria of Section \ref{sec: identif and non-deg} are easily implementable and avoid to use the machinery of \cite{kasonga1990maximum}. In Section \ref{sec: swarming}, we develop a non-trivial example of kinetik mean-field model with a double layer potential that may serve in many applications, like swarming models or more general individual based-models, see \cite{bolley2011stochastic} and the references therein. We finally develop a genuinely non-linear example, {\it i.e.} when the measure argument is not linear like in \eqref{eq: explain drift}, as for instance in the examples of \cite{oelschlager1985law}. Assumption \ref{ass: init condition} is in force throughout.

\subsection{McKean-like models} \label{sec: kasonga simple}
In many applications,  \eqref{eq:diff basique} takes the explicit form 
\begin{equation} \label{eq: OU}
dX_t^i = (\vartheta_1X_t^i+\vartheta_2)dt-\vartheta_3 N^{-1}\sum_{j = 1}^N(X_t^i-X_t ^j)dt+dB_t^i,\;\;i = 1,\ldots, N
\end{equation}
with $X_t^i \in \R$. The parameter is $\vartheta = (\vartheta_1\; \vartheta_2\; \vartheta_3)^\top$. In \cite{kasonga1990maximum} the case $\vartheta_2 = 0$ is studied at length in particular. In our setting, we can encompass a more general situation with $X_t^i \in \R^d$ for some arbitrary $d \geq 1$ and replace $\vartheta_3$ by a parameter in $\R^d \otimes \R^d$ as well as $\vartheta_2$ by a parameter in $\R^d$. In this case, Assumptions  \ref{reg sigma}, \ref{reg b} and \ref{hyp : reg sigma b} are readily checked. Likewise, the identifiability and non-degeneracy assumptions can be obtained with some extra care on the initial condition. We elaborate on a specific case below.

\subsubsection*{Likelihood equations} To keep-up with notational simplicity, we detail the case $p=3$ with $\vartheta = (\vartheta_1\; \vartheta_2\; \vartheta_3)^\top \in \Theta$ as a compact subset of $\R^3$ for an ambient dimension $d=1$, with $\vartheta_1 \neq \vartheta_3$ and $\vartheta_1 \neq 0$. Introduce
$$\mathsf A_t^N(x) =
\left(
\begin{array}{ccc}
x^2 & 
x& - \langle \cdot -x,\mu_t^{(N)}\rangle^2  \\
x & 1 & 0 \\
 - \langle \cdot -x,\mu_t^{(N)}\rangle^2  &  0 &  \langle \cdot -x,\mu_t^{(N)}\rangle^2 \\
 \end{array}
\right),\;\;
\mathsf B_t^N(x) =\left(
\begin{array}{c}
x  \\
1 \\
 \langle \cdot -x,\mu_t^{(N)}\rangle   
  \end{array}
\right),
$$
where we use the bracket notation $\langle \cdot, \nu \rangle$ to denote integration w.r.t. the measure $\nu$. 
Define
\begin{equation} \label{eq: integ A}
\mathsf A_T^N = \int_0^T \langle \mathsf A_t^N(x),\mu_t^{(N)}\rangle dt
\end{equation}
and 
\begin{equation} \label{eq: integ B}
\mathsf B_T^N = N^{-1}\sum_{i =1}^N\int_0^T \mathsf B_t^N(X_t^i)dX_t^i.
\end{equation}
Thanks to the linearity in $\vartheta$ of the drift $b(\vartheta;t,x,\nu) = \vartheta_1 x + \vartheta_2 -\vartheta_3\int_{\R}(x-y)\nu(dy)$, the likelihood equations are explicit and the maximum likelihood estimator $\widehat \vartheta_N^{\,\tt{mle}}$ solves 
\begin{equation} \label{eq: mle equation}
\mathsf A_T^N \widehat \vartheta_N^{\,\tt{mle}} \,= \mathsf B_T^N.
\end{equation}
Moreover, the Fisher information matrix is given by
$$\mathbb I_{\mathcal G}(\vartheta) = \int_0^T \langle \mathsf A_t(\vartheta; x),\mu_t^\vartheta \rangle dt,$$
with
$$\mathsf A_t(\vartheta; x) =
\left(
\begin{array}{ccc}
x^2 & x & - \langle \cdot -x,\mu_t^{\vartheta}\rangle^2 \\
x & 1 & 0 \\
 - \langle \cdot -x,\mu_t^{\vartheta}\rangle^2  &  0 &  \langle \cdot -x,\mu_t^{\vartheta}\rangle^2 
 \end{array}
\right).
$$
\subsubsection*{Non-degeneracy and identifiability} The property $\mathsf{det}\,\mathbb I_{\mathcal G}(\vartheta) >0$ can be verified on the explicit form of its matrix: 
\begin{align*}
\mathsf{det}\,\mathbb I_{\mathcal G}(\vartheta)  & = \int_0^T\mathsf{Var}(\mu_t^\vartheta)dt \,\Big(T\int_0^T \overline{\mathfrak m}_1(\mu_t^\vartheta)^2dt-\big(\int_0^T\overline{\mathfrak m}_1(\mu_t^\vartheta)dt\big)^2\Big)
\end{align*}
with
$\mathsf{Var}(\nu) = \int_{\R}(x-\overline{\mathfrak m}_1(\nu))^2\nu(dx)$ and $\overline{\mathfrak m}_1(\nu)=\int_{\R^d}x\nu(dx)$. Therefore $\mathsf{det}\,\mathbb I_{\mathcal G}(\vartheta) >0$ unless $\mu_t^\vartheta$ is degenerate for all $t$ or stationary. 
In the case of a linear equation of the type \eqref{eq: OU}, the accompanying limiting measure $\mu_t^{\vartheta}$ associated to the McKean-Vlasov equation
$$dX_t = (\vartheta_1X_t+\vartheta_2)dt-\vartheta_3 (X_t-\E_{\overline{\PP}_\vartheta}[X_t])dt+dB_t$$
is a Gaussian process that can be thought of as an inhomogeneous Ornstein-Uhlenbeck model for which we have closed-form moment formulas:
\begin{equation} \label{eq: close 1}
\overline{\mathfrak m}_1(\mu_t^\vartheta) = -\vartheta_1^{-1}\vartheta_2+(\overline{\mathfrak m}_1(\mu_0)+\vartheta_1^{-1}\vartheta_2)\exp(\vartheta_1 t)
\end{equation}
and
\begin{equation} \label{eq: close 2}
\mathfrak m_2(\mu_t^\vartheta) = \exp\big(-2(\vartheta_1-\vartheta_3)t\big)\mathrm{Var}(\mu_0)+ 
\frac{1-\exp(-2(\vartheta_1-\vartheta_3)t)}{2(\vartheta_1-\vartheta_3)}
+\overline{\mathfrak m}_1(\mu_t^\vartheta)^2.
\end{equation}
In particular, having
\begin{equation} \label{eq: constraint}
\overline{\mathfrak m}_1(\mu_0)+\vartheta_1^{-1}\vartheta_2 \neq 0,\;\;\vartheta_1 \neq 0,\;\;\vartheta_1 \neq \vartheta_3
\end{equation} 
yields the non-degeneracy of $\mathbb I_{\mathcal G}(\vartheta)$ as well as the non-degeneracy in the sense of Definition \ref{def: non deg}, since $\nabla_\vartheta b(\vartheta;t,x,\nu)$ does not depend on $\vartheta$. 
Also,  
the convergence $\mathsf A_T^N \rightarrow \mathbb I_{\mathcal G}(\vartheta)$ in $\PP_\vartheta^N$-probability as $N \rightarrow \infty$ tells us that \eqref{eq: mle equation} has a well defined and unique solution with $\PP_\vartheta^N$ probability tending to one as $N \rightarrow \infty$. This last statement can be quantified via the convergence Lemma \ref{lem: conv prob} below.
Writing $\phi(x,\nu) = (x\;\;1\;\;-\int_{\R}(x-y)\nu(dy))$, the log-likelihood 
\begin{equation} \label{eq: quadrat structure}
\ell^N(\vartheta; X^{(N)}) = \sum_{i = 1}^N\int_0^T \vartheta^\top \phi(X_t ^i,\mu_t^{(N)})dX_t^i - \frac{1}{2}\sum_{i = 1}^N\int_0^T \big(\vartheta^\top \phi(X_t ^i,\mu_t^{(N)})\big)^2dt 
\end{equation} 
has representation \eqref{eq: rep exp} with
$$G^N(X^{(N)}) =  \sum_{i = 1}^N\int_0^T \phi(X_t ^i,\mu_t^{(N)})dX_t^i\;\;\text{and}\;\; H^N(X^{(N)}) =  -\frac{1}{2}\sum_{i = 1}^N\int_0^T \phi(X_t ^i,\mu_t^{(N)})\phi(X_t ^i,\mu_t^{(N)})^\top dt$$
and we obtain the identifiability of $\mathcal E^N$ and $\mathcal G$ over compact parameter sets $\Theta \subset \R^3$ that are moreover convex and satisfy the constraint \eqref{eq: constraint} by Proposition \ref{prop: eq-identif-nondeg}. Finally, explicit formulas for $\mathbb I_{\mathcal G}(\vartheta)$ and its inverse can be derived thanks to \eqref{eq: close 1} and \eqref{eq: close 2}.

\subsection{Generalised linear like models} \label{sec: kasonga extended}

We push further the preceding linear structure by considering the following model
$$dX_t^i = \vartheta_1 f(X_t^i)dt+\vartheta_2 N^{-1}\sum_{j = 1}^N g(X_t ^i-X_t^j)dt+dB_t^i,\;\;1 \leq i \leq N$$
where $f$ and $g$ are known and Lipschitz continuous real-valued functions and $X_t^i \in \R$ for simplicity. The parameter is $\vartheta  = (\vartheta_1, \vartheta_2) \in \R^2$. Thus $p=2$ which again yields simple and explicit formulas. Assumptions  \ref{reg sigma}, \ref{reg b} and \ref{hyp : reg sigma b} are readily checked. Writing $\phi(x,\nu) = (f(x)\;g\star \nu(x))^\top$, with $g\star \nu(x) =  \int_{\R}g(x-y)\nu(dy)$ the log-likelihood function
$\ell^N(\vartheta; X^{(N)})$ has again representation \eqref{eq: quadrat structure}
hence  \eqref{eq: rep exp} holds as well
and we obtain identifiability of $\mathcal E^N$ and $\mathcal G$ over compact parameter sets $\Theta \subset \R^2$ that are moreover convex 
as soon as $\mathsf{det}\, \mathbb I_{\mathcal G}(\vartheta) >0$ by Proposition \ref{prop: eq-identif-nondeg}.\\

\subsubsection*{Non-degeneracy} For proving non-degeneracy (that implies in particular $\mathsf{det}\, \mathbb I_{\mathcal G}(\vartheta) >0$), we plan to apply Proposition \ref{prop: criterion fisher}. We first notice that  $\nabla_\vartheta b(\vartheta;t,x,\nu) = \phi(x,\nu)$ does not depend on $\vartheta$, hence 
$\nabla_{\vartheta}b([\vartheta, \vartheta'];t,x,\nu) = (f(x)\; g\star \nu(x))^\top$ does not depend on the segment $[\vartheta, \vartheta']$ either. The continuity of $t \mapsto  (f(x)\; g\star \mu_t^\vartheta(x))^\top$ follows from the representation
$$(f(x)\; g\star \mu_t^\vartheta(x))^\top = \big(f(x)\;\E_{\overline{\PP}_\vartheta}[g(X_t)]\big)^\top,$$
the fact that $t \mapsto X_t$ is continuous in probability at $t = 0$ under $\overline{\PP}_\vartheta$ and Lebesgue dominated convergence. Now, let $z=(z_1,z_2)$ with $z_1^2+z_2^2=1$. In order to obtain non-degeneracy, it is sufficient by Proposition \ref{prop: criterion fisher} to show that the function
$$x \mapsto f(x)z_1+g\star\mu_0(x)z_2$$
is non identically zero.  If $f$ does not identically vanishes, we may assume that $z_2 \neq 0$. Then it is sufficient to have that
\begin{equation} \label{eq: g star non deg}
x \mapsto \lambda f(x)+ g\star \mu_0(x)\;\;\text{does not vanish identically for every}\;\;\lambda \neq 0.
\end{equation}
It is then very easy to build families of functions and initial condition $(f,g,\mu_0)$ such that \eqref{eq: g star non deg} is satisfied. For instance, if $\mu_0 = \delta_{x_0}$ for some arbitrary $x_0$, then having $f$ non-identically equal to a constant is sufficient.\\

\subsubsection*{Likelihood equations} Finally, we explicitly solve the likelihood equations. Again, they are of a simple form, and the maximum likelihood estimator $\widehat \vartheta_N^{\,\tt{mle}}$ solves 
\begin{equation} \label{eq: mle equation}
\mathsf A_T^N \widehat \vartheta_N^{\,\tt{mle}} \,= \mathsf B_T^N,
\end{equation}
where $\mathsf A_T^N$ and $\mathsf B_T^N$ are defined via \eqref{eq: integ A} and \eqref{eq: integ B}, with 
$$\mathsf A_t^N(x) =
\left(
\begin{array}{cc}
f(x)^2 & f(x)\langle g(x-\cdot),\mu_t^{(N)}\rangle  \\
 f(x)\langle g(x-\cdot),\mu_t^{(N)}\rangle   &   \langle g(x-\cdot),\mu_t^{(N)}\rangle^2  
 \end{array}
\right),\;\;
\mathsf B_t^N(x) =\left(
\begin{array}{c}
f(x)  \\
 \langle g(x-\cdot),\mu_t^{(N)}\rangle  
 \end{array}
\right).
$$
Again, we have 
$$\mathbb I_{\mathcal G}(\vartheta) =  \left(
\begin{array}{cc}
\int_0^T\langle f^2, \mu_t^\vartheta \rangle dt & \int_0^T \langle f (g \star \mu_t^{\vartheta}),\mu_t^\vartheta \rangle dt \\
 \int_0^T \langle f (g \star \mu_t^{\vartheta}),\mu_t^\vartheta \rangle dt  &   \int_0^T\langle (g \star \mu_t^{\vartheta})^2,\mu_t^\vartheta \rangle dt 
 \end{array}
\right),$$
by taking the limit in $\PP_\vartheta^N$-probability of $\mathsf A_T^N = \int_0^T\langle \mathsf A_t^N(x),\mu_t^{(N)}\rangle dt$. We also know that $\mathsf{det}\, \mathbb I_{\mathcal G}(\vartheta) >0$ by the non-degeneracy property established above thanks to Proposition \ref{prop: criterion fisher}. Hence $\mathsf A_T^N$ is invertible with $\PP_\vartheta^N$-probability that goes to one as $N \rightarrow \infty$ and $\widehat \vartheta_N^{\,\tt{mle}}$ is asymptotically well defined.

\subsection{A double layer potential model}  \label{sec: swarming}
We depart from the structure \eqref{eq: rep exp} of the likelihood as in the two preceding linear-like models and study the model
$$dX_t^i = N^{-1}\sum_{j = 1}^N \nabla U_{\vartheta}(X_t^i-X_t^j)dt+dB_t^i,\;\;1 \leq i \leq N,$$ 
with ambient state space $\R^d$ for $d \geq 1$ and
where $U_{\vartheta}:\R^d \rightarrow \R$ is a family of pairwise potentials of the form
$$U_{\vartheta}(x) = -\vartheta_1\exp(-\vartheta_2|x|^2) +\vartheta_3\exp(-\vartheta_4|x|^2)$$
modelling short range repulsion and long range attraction, where $\vartheta_1, \vartheta_3$ and $\vartheta_2, \vartheta_4$ are respectively the strengths and the lengths of attraction and repulsion. The parameter is $\vartheta = (\vartheta_1,\ldots, \vartheta_4) \in (0,\infty)^4$. As minimal identifiability condition, we impose $\vartheta_2 \neq \vartheta_4$.
Such models are commonly used (in their kinetic version) in swarming modelling, see {\it e.g.} \cite{bolley2011stochastic}.\\

We have $b(\vartheta;t,x,\nu) = \nabla U_\vartheta \star \nu(x)$ and it is readily verified that Assumptions  \ref{reg sigma}, \ref{reg b} and \ref{hyp : reg sigma b} are met. Here, there is no hope to explicitly solve  the likelihood equations, and a numerical scheme has to be implemented. We further investigate the identifiability and non-degeneracy of the model. Note that the assumptions on the drift and the initial condition ensure that for every $\vartheta \in \Theta$, $\mu_t^\vartheta$ is absolutely continuous with a nowhere vanishing density for $t >0$ (we refer to \cite{BoKry} and in particular Corollary 8.2.2).

\subsubsection*{Identifiability}
We study the injectivity of the mapping $\vartheta \mapsto \overline{\PP}_{\vartheta}$ via the injectivity of $\vartheta \mapsto  \big((t,x) \mapsto \nabla U_\vartheta \star \mu_{t}^\vartheta)\big)$, see Assumption \ref{ass: identif product experiment}. If $\vartheta, \vartheta'$ are such that $\overline{\PP}_\vartheta = \overline{\PP}_{\vartheta'}$,  this also implies $\mu_t^{\vartheta} = \mu_t^{\vartheta'}$ for every $t \in [0,T]$.  Hence the condition-
$$ \nabla U_\vartheta \star \mu_{t}^\vartheta =  \nabla U_{\vartheta'} \star \mu_{t}^{\vartheta'}$$
for almost every $t\in [0,T]$ becomes
\begin{equation} \label{eq: fourier crux}
\nabla U_\vartheta \star \mu_{t}^\vartheta =  \nabla U_{\vartheta'} \star \mu_{t}^{\vartheta}
\end{equation}
for almost every $t\in [0,T]$.  Let $\mathcal F(\nu)(\xi) = \int_{\R^d} e^{ix^\top\xi}\,\nu(dx)$ denote a Fourier transform of  $\nu$ (well defined if $\nu$ is a probability measure or an integrable function).  Since $\xi \mapsto \mathcal F(\mu_t)(\xi)$ is continuous and $\mathcal F(\mu_t)(\xi)=1$ (the $\mu_t$ are all probability measures on $\R^d$), there are infinitely many points $\xi \in \R^d$ such that 
$\mathcal F(\mu_t)(\xi) \neq 0$ . Applying $\mathcal F$ on both side of \eqref{eq: fourier crux},  we obtain
$$\mathcal F(\nabla U_\vartheta)(\xi) = \mathcal F(\nabla U_{\vartheta'})(\xi)$$
for such points $\xi$. Moreover
\begin{equation} \label{eq: nabla U}
\nabla U_{\vartheta}(x) = 2\vartheta_1\vartheta_2 x\exp(-\vartheta_2|x|^2)-2\vartheta_3\vartheta_4 x \exp(-\vartheta_4|x|^2),
\end{equation}
and
$$
\mathcal F(\nabla U_\vartheta)(\xi) = i\xi \pi^{d/2}\big(\vartheta_1 \vartheta_2^{-d/2}\exp(-\tfrac{1}{4\vartheta_2}|\xi|^2)-\vartheta_3 \vartheta_4^{-d/2}\exp(-\tfrac{1}{4\vartheta_4}|\xi|^2)\big).
$$
It follows that the condition $\vartheta_2 \neq \vartheta_4$ is sufficient to achieve identifiability, {\it i.e.} $\vartheta = \vartheta'$. Henceforth,  we may parametrise our model via any compact $\Theta \subset (0,\infty)^4$ such that $\vartheta_2 \neq \vartheta_4$.

\subsubsection*{Non-degeneracy} We plan to apply Proposition \ref{prop: criterion fisher}. From \eqref{eq: nabla U}, we have
$$\nabla_\vartheta b(\vartheta; t,x,\nu)^j = \nabla_\vartheta (\nabla U_\vartheta \star \nu(x))^j = G(\vartheta; x)^j\star \nu(x),
$$
with
$$G(\vartheta;x)^j=
\left(
\begin{array}{c}
2\vartheta_2 x_j\exp(-\vartheta_2|x|^2)\\
2\vartheta_1x_j(1-\vartheta_2|x|^2)\exp(-\vartheta_2|x|^2) \\
-2\vartheta_4 x_j\exp(-\vartheta_4|x|^2)\\
-2\vartheta_3x_j(1-\vartheta_4|x|^2)\exp(-\vartheta_4|x|^2)
\end{array}
\right),\;\;j=1,\ldots, d
$$
The mapping 
$$t \mapsto \nabla_\vartheta b([\vartheta, \vartheta']; t,x,\mu_t^\vartheta)^j=\E_{\overline{\PP}_\vartheta}\big[G([\vartheta, \vartheta'];x-X_t)^j\big]$$
is continuous at $t=0$, as a simple consequence of the fact that $t \mapsto X_t$ is continuous in $\overline{\PP}_{\vartheta}$-probability at $t=0$. It is then sufficient to prove that for any $z \in \R^4$ with $z_1^2+z_2^2+z_3^2+z_4^2=1$, one of the functions 
$$x \mapsto (G([\vartheta, \vartheta']; \cdot)^j \star \mu_0(x))^\top z,\;\;j=1,\ldots,d.$$
does not vanish identically. Assume on the contrary that $(G([\vartheta, \vartheta']; \cdot)^j \star \mu_0)^\top z$ is identically $0$ for every $1 \leq j \leq d$. Then this is also the case for $\mathcal F\big((G([\vartheta, \vartheta']; \cdot)^j \star \mu_0)^\top z\big)$. Assume now that $\mathcal F(\mu_0)(\xi) \neq 0$ $d\xi$-a.e. Then from 
$$\mathcal F\big((G([\vartheta, \vartheta']; \cdot)^j \star \mu_0)^\top z\big)(\xi) = \mathcal F\big(G([\vartheta, \vartheta']; \cdot)^j(\xi)\big)^\top z \cdot \mathcal F(\mu_0)(\xi)$$
we must have
$$\xi \mapsto \mathcal F(G([\vartheta, \vartheta']; \cdot)^j)(\xi)^\top z = 0\;\;d\xi-a.e.$$
for every $1 \leq j \leq d$, or equivalently
$$x \mapsto \big(G([\vartheta, \vartheta']; x)^j\big)^\top z = 0\;\;dx-a.e.$$
This is not possible, as proved by
an inspection of the equation
\begin{align*} 
&\int_0^1 \big([\vartheta_2,\vartheta_2']_\lambda x_j\exp(-[\vartheta_2, \vartheta_2']_\lambda|x|^2)z_1+[\vartheta_1, \vartheta_1']_\lambda x_j(1-[\vartheta_2, \vartheta_2']_\lambda|x|^2)\exp(-[\vartheta_2, \vartheta_2']_\lambda|x|^2)z_2\big) d\lambda\\
= &\int_0^1 \big([\vartheta_4,\vartheta_4']_\lambda x_j\exp(-[\vartheta_4, \vartheta_4']_\lambda|x|^2)z_3+[\vartheta_3, \vartheta_3']_\lambda x_j(1-[\vartheta_4, \vartheta_4']_\lambda|x|^2)\exp(-[\vartheta_4, \vartheta_4']_\lambda|x|^2)z_4\big) d\lambda,
\end{align*} 
for almost every $x \in \R^d$,
which further reduces to 
\begin{align} 
&\big([\vartheta_2,\vartheta_2']_{\lambda_x} x_jz_1+[\vartheta_1, \vartheta_1']_{\lambda_x} x_j(1-[\vartheta_2, \vartheta_2']_{\lambda_x}|x|^2)z_2\big)\exp(-[\vartheta_2, \vartheta_2']_{\lambda_x}|x|^2) \nonumber\\
= &\big([\vartheta_4,\vartheta_4']_{\lambda'_x} x_jz_3+[\vartheta_3, \vartheta_3']_{\lambda'_x} x_j(1-[\vartheta_4, \vartheta_4']_{\lambda'_x}|x|^2)z_4\big)\exp(-[\vartheta_4, \vartheta_4']_{\lambda_x'}|x|^2) \label{eq: identif}
\end{align} 
for almost every $x \in \R^d$,
by the mean-value theorem for some $\lambda_x, \lambda_x' \in [0,1]$, that also respectively depend on $(\vartheta_1, \vartheta_2, \vartheta'_1, \vartheta_2',z_1, z_2)$ and $(\vartheta_3, \vartheta_4, \vartheta_3', \vartheta_4', z_3, z_4)$.
A simple sufficient condition is $\vartheta_2 \neq \vartheta_4$: indeed, if $\vartheta_2$ and $\vartheta_4$ take values in disjoint intervals for instance, then for every $\vartheta, \vartheta' \in \Theta$, with $[\vartheta, \vartheta'] \subset \Theta$, we have $[\vartheta_2,\vartheta_2'] \cap [\vartheta_4,\vartheta_4'] = \emptyset$. Then one easily checks that \eqref{eq: identif} cannot hold for sufficiently large $|x|$.\\

If we only need to verify that $\mathbb I_\mathcal G(\vartheta)$ is non-degenerate, it is sufficient to take $\vartheta = \vartheta'$ in \eqref{eq: identif} that simply becomes
\begin{align*} 
\big(\vartheta_2x_jz_1+\vartheta_1x_j(1-\vartheta_2|x|^2)z_2\big)\exp(-\vartheta_2|x|^2) = \big(\vartheta_4 x_jz_3+\vartheta_3x_j(1-\vartheta_4|x|^2)z_4\big)\exp(-\vartheta_4|x|^2)
\end{align*} 
for almost every $x \in \R^d$, in which case having $\vartheta_2 \neq \vartheta_4$ is sufficient (and somewhat easier to obtain then the non-degeneracy of $\mathcal G$ in the sense of Definition \ref{def: non deg}). In conclusion, as soon as $\mathcal F(\mu_0)$ is non-vanishing almost everywhere and $\Theta \subset (0,\infty)^4$ is a compact such that $\vartheta_2 \neq \vartheta_4$, we obtain non-degeneracy of $\mathcal G$.
\subsection{A genuinely non-linear example}

We end-up this section by inspecting an example where the parametrisation $\nu \mapsto b(\vartheta;t,x,\nu)$ is genuinely non-linear  in the measure argument. Consider the model
$$dX_t ^i = F\Big(\vartheta N^{-1}\sum_{j = 1}^Ng(X_t^i-X_t^j)\Big)dt+dB_t^i,\;\;1 \leq i \leq N,$$
with $X_t^i \in \R$ and $\vartheta > 0$ for simplicity.
The functions $F,g:\R \rightarrow \R$ are known and smooth,  $g$ is nonnegative, integrable, with positive mass and $F$ is one-to-one on the positive axis. We have $b(\vartheta;t,x,\nu) = F(\vartheta g\star\nu(x))$.\\

The smoothness of $F$ and $g$ yields Assumptions \ref{reg sigma}, \ref{reg b} and \ref{hyp : reg sigma b}.  As for the identifiability of the model, assume that $\overline{\PP}_{\vartheta} = \overline{\PP}_{\vartheta'}$, and so $\mu_t^{\vartheta} = \mu_t^{\vartheta'}$ as well. If, for almost every $x \in \R$, we have
$$F(\vartheta g \star \mu_t^{\vartheta}(x)) = F(\vartheta' g \star \mu_t^{\vartheta}(x)),$$
then, since $g(x) \geq 0$ and $\mu_t^\vartheta(x) >0$ for every $x \in \R$ and $t>0$, the function $g \star \mu_t^{\vartheta}$ is positive on the whole real line $\R$.  Since $F$ is one-to-one on the positive axis, it follows that 
$$\vartheta g \star \mu_t^{\vartheta}(x) = \vartheta' g \star \mu_t^{\vartheta}(x)$$
and that can only be true if $\vartheta = \vartheta'$ since $g \star \mu_t^{\vartheta}(x)$ is nowhere vanishing. As for the non-degeneracy, we have
\begin{align*}
\partial_\vartheta b(\vartheta; t, x, \mu_t)  & = g\star \mu_t^\vartheta(x) F'\big(\vartheta g\star \mu_t^{\vartheta}(x)\big) = \E_{\overline{\PP}_{\vartheta}}[g(x-X_t)]F'\big(\vartheta\E_{\overline{\PP}_\vartheta}[g(x-X_t)]\big)
\end{align*}
which is continuous at $t=0$ by the continuity in $\overline{\PP}_\vartheta$- probability of $t \mapsto X_t$ at $t=0$. Finally, assume that $\mu_0$ has a positive density. Then
$x \mapsto g\star \mu_0(x) F'\big(\vartheta g\star \mu_0(x)\big)$ is non-identically vanishing since $g\star \mu_0$ is positive and $F'$ is non-vanishing on $(0,\infty)$. We conclude by Proposition \ref{prop: criterion fisher}.

\section{Proof of the main results} \label{section : proofs}

\subsection{Preliminaries: couplings} \label{sec: couplings}
For technical reasons, we will need certain couplings on the canonical space $(\mathcal C^N,\mathcal F^N)$. We now fix $\vartheta \in \Theta$, and introduce, for every $\vartheta' \in \Theta$, the following two processes 
$$X^{(N), \vartheta'} = (X_t^{1, \vartheta'}, \ldots, X_t^{N, \vartheta'})_{t \in [0,T]}$$
and
$$\overline{X}^{(N), \vartheta'} = (\overline{X}_t^{1, \vartheta'}, \ldots, \overline{X}_t^{N, \vartheta'})_{t \in [0,T]}$$
defined on $(\mathcal C^N, \mathcal F^N)$ by
\begin{equation} \label{eq: def coupling}
X_t^{i,\vartheta'} = X_0^i+\int_0^t b(\vartheta';s,X_s^{i,\vartheta'}, \mu_t^{(N), \vartheta'})ds+\int_0^t \sigma(s,X_s^{i,\vartheta'})dB^{i,N,\vartheta}_s,
\end{equation}
with $\mu_t^{(N),\vartheta'} = N^{-1}\sum_{i =1}^N \delta_{X_t^{i,\vartheta'}}$ and
\begin{equation} \label{eq: def coupling product}
\overline{X}_t^{i,\vartheta'} = X_0^i+\int_0^t b(\vartheta';s,\overline{X}_s^{i,\vartheta'}, \mu_t^{\vartheta'})ds+\int_0^t \sigma(s,\overline{X}_s^{i,\vartheta'})dB^{i,N,\vartheta}_s,
\end{equation}
and 
$$B^{i,N,\vartheta}_t = \int_0^t c^{-1/2}(s,X_s^i)(dX_s^i-b(\vartheta, s,X_s^i, \mu_s^{(N)})ds).$$ 
Note that $X^{(N), \vartheta'}$ and $\overline{X}^{(N), \vartheta'}$ actually depend on $\vartheta$ pathwise via $(B_t^{i,N,\vartheta})_{t \in [0,1]}$ under $\mathbb P_{\vartheta}^N$ but not their laws! Indeed, the $(B_t^{i,N,\vartheta})_{t \in [0,1]}$ are standard Brownian motions under $\mathbb P_{\vartheta}^N$. For notational simplicity, we omit the dependence upon $\vartheta$ here. Otherwise, we write $X_t^{i,\vartheta',\vartheta}$ or $\overline{X}_t^{i,\vartheta',\vartheta}$. Thus $X^{(N), \vartheta'}$ and $\overline{X}^{(N), \vartheta'}$ are defined as functions of $X^{(N)}$ (as strong solutions of \eqref{eq: def coupling} and \eqref{eq: def coupling product}) and have law $\PP_{\vartheta'}^N$ and $\overline{\PP}_{\vartheta'}^{\otimes N}$ under $\mathbb P_{\vartheta}^N$. This is a convenient way to couple $\PP_{\vartheta'}^N$ and $\overline{\PP}_{\vartheta'}^{\otimes N}$ while still working with the canonical process under $\mathbb P_{\vartheta}^N$. We write $\mu_t^{(N)} = N^{-1}\sum_{i  = 1}^N \delta_{X_t^i}$, for the empirical measure of the canonical process. We also introduce
$$\overline{\mu}_t^{(N),\vartheta'} = N^{-1}\sum_{i  = 1}^N \delta_{\overline{X}_t^{i,\vartheta'}}$$
and write $\mu_t^{(N), \vartheta',\vartheta}$ and  $\overline{\mu}_t^{(N),\vartheta', \vartheta}$ whenever we want to emphasise that the coupling is constructed with the processes $(B^{i,N,\vartheta}_t)_{t \in [0,T]}$ for $1 \leq i \leq N$.
We have the following approximation results:
\begin{lem} \label{lem: couplings}
For every $\vartheta,\vartheta' \in \Theta$ and every $r\geq 1$, we have
\begin{equation} \label{eq: coupling 1}
   \sup_{t \in [0,T]}\EE_{\PP_\vartheta^N}\big[\mathcal{W}_1\big(\mu^{(N),\vartheta', \vartheta}_t,\mu^{(N)}_t\big)^r\big] \leq  \sup_{t \in [0,T]} \EE_{\PP_{\vartheta}^N}\big[  N^{-1}\sum_{i=1}^N | X^{i,\vartheta', \vartheta}_t  - X^{i}_t |^{r} \big] \leq C |\vartheta'-\vartheta|^r,   
 \end{equation}
\begin{equation}  \label{eq: coupling 2}
     \sup_{t \in [0,T]}\EE_{\PP_{\vartheta}^N}\big[\mathcal W_1\big(\bar{\mu}^{(N),\vartheta', \vartheta}_t,\overline{\mu}^{(N),\vartheta, \vartheta}_t\big)^r\big]  \leq  \sup_{t \in [0,T]} \EE_{\PP_{\vartheta}^N}\big[ N^{-1}\sum_{i=1}^N | \overline{X}^{i,\vartheta',\vartheta}_t  - \overline{X}^{i,\vartheta, \vartheta}_t |^{r} \big] \leq C |\vartheta'-\vartheta|^r.   
 \end{equation}
 There exists $\delta >0$ such that for every $r \geq 1$:
 \begin{align}
   &\sup_{t \in [0,T], \vartheta \in \Theta}  \EE_{\PP_{\vartheta}^N}\big[\mathcal W_1\big(\mu^{(N)}_t ,\mu^{\vartheta}_t \big)^{r}\big]  \leq CN^{-\delta r}, \label{eq: first last coupling}\\
   &\sup_{t \in [0,T], \vartheta \in \Theta}  N^{-1}\sum_{i=1}^N \EE_{\PP_{\vartheta}^N}[ | X^{i}_t  - \overline{X}^{i,\vartheta, \vartheta}_t |^{r} ] \leq CN^{-\delta r}  \label{eq: last coupling}
\end{align}
as $N \rightarrow \infty$.
\end{lem}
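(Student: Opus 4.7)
The plan is to separate the four estimates into two groups: \eqref{eq: coupling 1}--\eqref{eq: coupling 2} are smoothness-in-parameter estimates of SDE flows, while \eqref{eq: first last coupling}--\eqref{eq: last coupling} are classical propagation-of-chaos bounds. In each case the strategy is to write the difference of the processes as an Itô integral against the common Brownian motions $(B_t^{i,N,\vartheta})$, apply the Burkholder-Davis-Gundy (BDG) inequality, split drift differences using the Lipschitz properties of Assumption \ref{reg b} together with the $\vartheta$-smoothness of Assumption \ref{hyp : reg sigma b}, absorb polynomial moments via Lemma \ref{lem: moment bound}, and close by Gronwall.

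For \eqref{eq: coupling 1}, the first inequality is the classical empirical coupling bound $\mathcal W_1(\mu^{(N),\vartheta',\vartheta}_t,\mu^{(N)}_t)\leq N^{-1}\sum_i |X^{i,\vartheta',\vartheta}_t-X^i_t|$ combined with Jensen's inequality. For the second, subtracting \eqref{eq: def coupling} from the canonical SDE and using BDG yields, for $r$ large enough,
\begin{align*}
\E_{\PP_\vartheta^N}\big[|X_t^{i,\vartheta',\vartheta}-X_t^i|^r\big] &\leq C\int_0^t \E_{\PP_\vartheta^N}\big[|X_s^{i,\vartheta',\vartheta}-X_s^i|^r\big]\,ds \\
& \quad + C\int_0^t \E_{\PP_\vartheta^N}\big[\mathcal W_1(\mu_s^{(N),\vartheta',\vartheta},\mu_s^{(N)})^r\big]\,ds \\
& \quad + C|\vartheta'-\vartheta|^r\int_0^t \E_{\PP_\vartheta^N}\big[\big(1+|X_s^i|^{r_1}+\mathfrak m_{r_2}(\mu_s^{(N)})\big)^r\big]\,ds,
\end{align*}
where the last term comes from the mean-value theorem applied with Assumption \ref{hyp : reg sigma b} and is uniformly bounded in $(t,N)$ thanks to Lemma \ref{lem: moment bound}. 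Averaging over $i$, the first two terms are both controlled by $N^{-1}\sum_i \E|X_s^{i,\vartheta',\vartheta}-X_s^i|^r$, so Gronwall closes the estimate. The proof of \eqref{eq: coupling 2} is nearly identical, except that the empirical measures are replaced by the deterministic flows $\mu^\vartheta_\cdot,\mu^{\vartheta'}_\cdot$ and the corresponding $\mathcal W_1$-term is bounded pointwise in time by $C|\vartheta'-\vartheta|$ thanks to Proposition \ref{prop: smoothness parameter McKean}.

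The estimates \eqref{eq: first last coupling}--\eqref{eq: last coupling} require the quantitative propagation-of-chaos input. For \eqref{eq: last coupling}, subtracting \eqref{eq: def coupling product} (with $\vartheta'=\vartheta$) from the canonical SDE, the drift difference splits as
\begin{align*}
b(\vartheta;s,X_s^i,\mu_s^{(N)})-b(\vartheta;s,\overline X_s^{i,\vartheta,\vartheta},\mu_s^\vartheta)
&= \big[b(\vartheta;s,X_s^i,\mu_s^{(N)})-b(\vartheta;s,\overline X_s^{i,\vartheta,\vartheta},\mu_s^{(N)})\big] \\
&\quad + \big[b(\vartheta;s,\overline X_s^{i,\vartheta,\vartheta},\mu_s^{(N)})-b(\vartheta;s,\overline X_s^{i,\vartheta,\vartheta},\mu_s^\vartheta)\big],
\end{align*}
which by Assumption \ref{reg b} is bounded in norm by $C|X_s^i-\overline X_s^{i,\vartheta,\vartheta}|+C\mathcal W_1(\mu_s^{(N)},\mu_s^\vartheta)$. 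Inserting the triangle inequality
\[
\mathcal W_1(\mu_s^{(N)},\mu_s^\vartheta)\leq N^{-1}\sum_j |X_s^j-\overline X_s^{j,\vartheta,\vartheta}|+\mathcal W_1(\overline \mu_s^{(N),\vartheta,\vartheta},\mu_s^\vartheta),
\]
applying BDG to the martingale part (using Lipschitz continuity of $\sigma$), averaging over $i$, and invoking Gronwall reduces the problem to controlling $\E_{\PP_\vartheta^N}\big[\mathcal W_1(\overline \mu_s^{(N),\vartheta,\vartheta},\mu_s^\vartheta)^r\big]$. Since the $\overline X^{i,\vartheta,\vartheta}$ are i.i.d. with common law $\mu_s^\vartheta$ having arbitrary polynomial moments (Lemma \ref{lem : puissances limite}), the Fournier-Guillin bound for empirical measures delivers the desired rate $N^{-\delta r}$ for some explicit $\delta>0$ depending on $d$. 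Estimate \eqref{eq: first last coupling} follows by a further triangle inequality combining \eqref{eq: last coupling} with the same Fournier-Guillin bound.

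The main obstacle is obtaining uniformity in $\vartheta\in\Theta$ in \eqref{eq: first last coupling}--\eqref{eq: last coupling}. All of the Lipschitz constants entering the Gronwall arguments are uniform in $\vartheta$ by Assumption \ref{reg b}, and the moment bound in Lemma \ref{lem: moment bound} is uniform in $\vartheta$ by inspection; the Fournier-Guillin rate only depends on the moments of the laws being sampled, which are uniformly bounded in $\vartheta$ by Lemma \ref{lem : puissances limite}. Hence the exponent $\delta$ depends only on $d$ and on the order $r$ and integrability required, and the constants are uniform.
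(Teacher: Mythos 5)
Your proposal is correct and follows essentially the same route as the paper: difference SDEs plus Burkholder--Davis--Gundy and Gr\"onwall for \eqref{eq: coupling 1}--\eqref{eq: coupling 2}, and the triangle inequality through the i.i.d. coupling combined with the Fournier--Guillin empirical-measure rate and Gr\"onwall for \eqref{eq: first last coupling}--\eqref{eq: last coupling}, with uniformity in $\vartheta$ coming from the uniform moment bounds of Lemmas \ref{lem: moment bound} and \ref{lem : puissances limite}. The only cosmetic difference is in \eqref{eq: coupling 2}: you invoke Proposition \ref{prop: smoothness parameter McKean} to bound $\mathcal W_1(\mu_s^{\vartheta'},\mu_s^{\vartheta})$ and then run a fresh Gr\"onwall argument, whereas the paper observes that \eqref{eq: coupling 2} is exactly the estimate \eqref{eq: anticipate} already established inside that Proposition's proof (where the $\mathcal W_1$ term is absorbed into the same Gr\"onwall quantity); both are legitimate since the Proposition is proved independently of the Lemma.
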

The proof is given in Appendix \ref{app: lem couplings}.

\subsection{Proof of Theorem \ref{thm: LAN prop}}

We prove a slightly stronger result, namely a uniform type LAN condition, following Chapter III of \cite{ibragimov2013statistical}. Let  $(u_N)_{N \geq 1}$ be a sequence of $\R^p$ such that $u_N \rightarrow u$ and $(\vartheta_N)_{N \geq 1}$ a sequence of $\Theta$ such that $\vartheta_N+(N\mathbb I_\mathcal G(\vartheta_N))^{-1/2} u_N \in \Theta$ for large enough $N$ and such that $\vartheta_N \rightarrow \vartheta$, for some $\vartheta$ such that $\mathbb I_{\mathcal G}(\vartheta) >0$. We claim that
\begin{equation} \label{eq: LAN unif}
\zeta_N(\vartheta_N;u_N) = \log \frac{d\PP_{\vartheta_N+(N\mathbb I_\mathcal G(\vartheta_N))^{-1/2} u_N}^N}{d\PP_{\vartheta_N}^N} = u^\top \Gamma_N-\tfrac{1}{2}|u|^2+r_N(\vartheta_N, u_N),
\end{equation}
where $\Gamma_N \rightarrow \mathcal N(0,\mathrm{Id}_{\R^p})$ in distribution under $\PP_\vartheta^N$ and $r_N(\vartheta_N, u_N) \rightarrow 0$ in $\PP_\vartheta^N$-probability. Clearly, \eqref{eq: LAN unif} implies \eqref{eq: LAN} and thus Theorem \ref{thm: LAN prop}. Note that since $\mathbb I_{\mathcal G}(\vartheta) >0$ we have 
that $\mathbb I_\mathcal G(\vartheta_N)$ is invertible for large enough $N$, thanks to the continuity of the mapping $\vartheta \mapsto \mathbb I_{\mathcal G}(\vartheta)$, recall Proposition \ref{prop: fisher cont}. The  asymptotic expansion \eqref{eq: LAN unif} is therefore meaningful for large enough $N$.\\

We further write $\mathbb I(\vartheta)$ for $\mathbb I_{\mathcal G}(\vartheta)$.\\ 

\noindent {\it Step 1.} {\it (Preliminary expansion.)} We have
\begin{align*}
&\zeta_N(\vartheta_N;u_N)  \\
& = \sum_{i=1}^{N}  \int_0^T \big( (c^{-1/2}b)(\vartheta_N+ (N {\mathbb I(\vartheta_N)})^{-1/2} u_N; t, X^{i}_t,\mu^{N}_t)-(c^{-1/2}b)(\vartheta_N; t, X^{i}_t,\mu^{N}_t) \big)^{\top} dB^{i, N,\vartheta_N}_t\\
& -\frac{1}{2} \sum_{i=1}^{N} \int_0^T \big|(c^{-1/2}b)(\vartheta_N + (N {\mathbb I(\vartheta_N)})^{-1/2} u_N; t, X^{i}_t,\mu^{N}_t) - (c^{-1/2}b)(\vartheta_N; t, X^{i}_t,\mu^{N}_t)\big|^2 dt, 
\end{align*}
where the $B^{i, N,\vartheta_N}_t = \int_0^t c^{-1/2}(s,X_s^i)(dX_s^i-b(\vartheta_N;s,X_s^i,\mu_s^{(N)})ds)$, $1 \leq i \leq N$ are independent Brownian motions on $\R^d$ under $\mathbb \PP_{\vartheta_N}^N$. A first-order Taylor's expansion therefore yields the representation,
\begin{align*}
\zeta_N(\vartheta_N;u_N) 
& = u_N^\top (\mathbb I(\vartheta_N)^{-1/2})^\top \Delta_{N,\vartheta_N}(u_N)-\tfrac{1}{2}u_N^\top (\mathbb I(\vartheta_N)^{-1/2})^\top \,\widetilde{\mathbb I}_{N, \vartheta_N}(u_N)\mathbb I(\vartheta_N)^{-1/2} u_N 
\end{align*}
where
\begin{align*}
 \Delta_{N,\vartheta_N}(u) = N^{-1/2}\sum_{i=1}^N\sum_{j = 1}^d\int_0^T \nabla_\vartheta(c^{-1/2}b)^j([\vartheta_N, \vartheta_N+(N \mathbb I(\vartheta_N))^{-1/2}u];t,X_t^i,\mu_t^N )d(B^{i, N, \vartheta_N}_t)^j
\end{align*}
and
\begin{align*}
 \widetilde{\mathbb I}_{N, \vartheta_N}(u) & = N^{-1} \sum_{i = 1}^N\sum_{j=1}^d \int_0^T \nabla_\vartheta(c^{-1/2}b)^j([\vartheta_N,\vartheta_N+(N \mathbb I(\vartheta_N))^{-1/2}u];t,X_t^i,\mu_t^N ) \\
 &\;\;\;\;\times \big(\nabla_\vartheta(c^{-1/2}b)^j([\vartheta_N, \vartheta_N+(N \mathbb I(\vartheta_N))^{-1/2}u];t,X_t^i,\mu_t^N) \big)^\top dt,
\end{align*}
with the notation $\phi([\vartheta, \vartheta']) = \int_0^1\phi(\vartheta+\lambda(\vartheta'-\vartheta))d\lambda$ that we introduced before.
We rewrite the above expansion as
\begin{align*}
\zeta_N(\vartheta_N;u_N) & = u_N^\top (\mathbb I(\vartheta_N)^{-1/2})^\top \Delta_{N,\vartheta_N}(0) - \tfrac{1}{2}|u|^2\\
&+u_N^\top (\mathbb I(\vartheta_N)^{-1/2})^\top \Delta_{N,\vartheta_N}(u_N)-u^\top  (\mathbb I(\vartheta_N)^{-1/2})^\top \Delta_{N,\vartheta_N}(0)\\
&-\tfrac{1}{2}\big(u_N^\top (\mathbb I(\vartheta_N)^{-1/2})^\top \,\widetilde{\mathbb I}_{N, \vartheta_N}(u_N)\mathbb I(\vartheta_N)^{-1/2} u_N -|u|^2\big)
\end{align*}
and thus \eqref{eq: LAN unif} follows from
\begin{equation} \label{eq: TCL}
(\mathbb I(\vartheta_N)^{-1/2})^\top \Delta_{N,\vartheta_N}(0)  \rightarrow \mathcal N(0,\mathrm{Id}_{\R^p})
\end{equation} 
under $\PP_{\vartheta_N}^N$ in distribution together with the convergence to $0$ of the last two components.\\

\noindent {\it Step 2.} {\it (Convergence of the Gaussian part.)} We prove \eqref{eq: TCL} or equivalently, the convergence
$$\xi^\top (\mathbb I(\vartheta_N)^{-1/2})^\top \Delta_{N,\vartheta_N}(0) = \sum_{q,q'=1}^d \xi_q (\mathbb I(\vartheta_N)^{-1/2})_{q'q}  \Delta_{N,\vartheta_N}(0)_{q'}  \rightarrow \mathcal N(0,|\xi|^2)$$
in distribution under $\PP_{\vartheta_N}^N$ for every $\xi \in \R^p$. We apply a classical semimartingale convergence result, following for instance Jacod and Shiryaev \cite{JS} (Corollary 3.24). For $t \in [0,T]$, the process 
$$\Delta_{N,\vartheta_N}(0)_t  =  N^{-1/2}\sum_{i=1}^N\sum_{j = 1}^d\int_0^t \nabla_\vartheta(c^{-1/2}b)^j(\vartheta_N;s,X_s^i,\mu_s^N )d\lambda \big)d(B^{i, N, \vartheta_N}_s)^j
$$
is a continuous local martingale under $\mathbb P_{\vartheta_N}^N$ and so is  $\big(\xi^\top (\mathbb I(\vartheta_N)^{-1/2})^\top \Delta_{N,\vartheta_N}(0)_t\big)_{t \in [0,T]}$. It coincides with $\xi^\top (\mathbb I(\vartheta_N)^{-1/2})^\top \Delta_{N,\vartheta_N}(0)$ at $t=T$ and has predictable compensator
\begin{align*}
 \big\langle \xi^\top (\mathbb I(\vartheta_N)^{-1/2})^\top &\Delta_{N,\vartheta_N}(0)_\cdot \big\rangle_t 
= \sum_{q_k,q_k'=1}^p \xi_{q_1}\xi_{q_2} (\mathbb I(\vartheta_N)^{-1/2})_{q_2'q_2}(\mathbb I(\vartheta_N)^{-1/2})_{q_1'q_1} \times \\
&N^{-1}\sum_{i = 1}^N\sum_{j =1}^d \int_0^t \partial_{\vartheta_{q'_1}}(c^{-1/2}b)^j(\vartheta_N; s,X_s^i, \mu_s^{(N)})\partial_{\vartheta_{q'_2}}(c^{-1/2}b)^j(\vartheta_N; s,X_s^i, \mu_s^{(N)})ds,
\end{align*}
that converges to 
\begin{align*}
&\sum_{q_k,q_k'=1}^p \xi_{q_1}\xi_{q_2} (\mathbb I(\vartheta)^{-1/2})_{q_2'q_2}(\mathbb I(\vartheta)^{-1/2})_{q_1'q_1} \times \\
&\sum_{j =1}^d \int_0^t \E_{\overline{\PP}_\vartheta}\big[\partial_{\vartheta_{q'_1}}(c^{-1/2}b)^j(\vartheta; s,X_s^i, \mu_s^{\vartheta})\partial_{\vartheta_{q'_2}}(c^{-1/2}b)^j(\vartheta; s,X_s^i, \mu_s^\vartheta)\big]ds,
\end{align*}
in $\PP_{\vartheta_N}^N$-probability, but that last quantity is exactly $|\xi|^2$ at $t=T$, which proves \eqref{eq: TCL}. As for the last convergence in probability, it is a simple consequence of the continuity of $\vartheta \mapsto \mathbb I(\vartheta)$, see Proposition \ref{prop: fisher cont} and the following lemma
\begin{lem} \label{lem: conv prob}
Let $\beta>0$ and $\phi: \Theta \times [0,T] \times \R^d \times \mathcal P_\beta \rightarrow \R$ be such that for some $C,\alpha >0$, we have
$$\sup_{t \in [0,T]}|\phi(\vartheta';t,x',\nu')-\phi(\vartheta;t,x,\nu)| \leq C(|\vartheta'-\vartheta|+|x'-x|+\mathcal W_1(\nu',\nu))(1+|x|^\alpha+|x'|^\alpha+\mathfrak m_\beta(\nu)+\mathfrak m_\beta(\nu')).$$
Then, there exists $0 < \delta \leq 1/2$ such that for every $t \in [0,T]$ and every $m>0$, we have
$$\E_{\PP_{\vartheta_N}^N}\Big[\Big|N^{-1}\sum_{i = 1}^N\int_0^t \phi(\vartheta_N; s, X_s^i, \mu_s^{(N)})ds - \int_0^t \int_{\R^d}\phi(\vartheta; s,x,\mu_s^\vartheta)\mu_s^\vartheta(dx)ds\Big|^m\Big] \leq C(|\vartheta_N-\vartheta|^m+N^{-\delta m}).$$
\end{lem}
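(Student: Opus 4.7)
The plan is to proceed by a four--term telescoping decomposition that replaces one ingredient at a time. Writing the quantity inside the expectation as $A_N-B_N$, I would set
\begin{align*}
(I) &= N^{-1}\sum_{i=1}^N\int_0^t\bigl(\phi(\vartheta_N;s,X_s^i,\mu_s^{(N)})-\phi(\vartheta;s,X_s^i,\mu_s^{(N)})\bigr)ds,\\
(II) &= N^{-1}\sum_{i=1}^N\int_0^t\bigl(\phi(\vartheta;s,X_s^i,\mu_s^{(N)})-\phi(\vartheta;s,X_s^i,\mu_s^\vartheta)\bigr)ds,\\
(III) &= N^{-1}\sum_{i=1}^N\int_0^t\bigl(\phi(\vartheta;s,X_s^i,\mu_s^\vartheta)-\phi(\vartheta;s,\overline X_s^{i,\vartheta,\vartheta_N},\mu_s^\vartheta)\bigr)ds,\\
(IV) &= N^{-1}\sum_{i=1}^N\int_0^t\phi(\vartheta;s,\overline X_s^{i,\vartheta,\vartheta_N},\mu_s^\vartheta)ds-\int_0^t\int_{\R^d}\phi(\vartheta;s,x,\mu_s^\vartheta)\mu_s^\vartheta(dx)ds,
\end{align*}
where $\overline X^{i,\vartheta,\vartheta_N}$ is the coupling of Section \ref{sec: couplings}: under $\PP_{\vartheta_N}^N$, the processes $(\overline X^{i,\vartheta,\vartheta_N})_{1\leq i\leq N}$ are iid with common law $\overline\PP_\vartheta$, and in particular their marginal at time $s$ is exactly $\mu_s^\vartheta$. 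I would then control each piece in $L^m(\PP_{\vartheta_N}^N)$.

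For $(I)$, $(II)$, $(III)$, the strategy is to apply the Lipschitz hypothesis on $\phi$, bring the $m$-th power inside the averages via Jensen, and split the Lipschitz factor from its polynomial majorant by Cauchy--Schwarz in $L^{2m}$; the majorants $1+|X|^\alpha+|X'|^\alpha+\mathfrak m_\beta(\mu_s^{(N)})+\mathfrak m_\beta(\mu_s^\vartheta)$ are uniformly bounded in every $L^q$ by Lemmas \ref{lem: moment bound} and \ref{lem : puissances limite}. Then $(I)$ picks up a factor $|\vartheta_N-\vartheta|^m$; $(II)$ picks up $\|\mathcal W_1(\mu_s^{(N)},\mu_s^\vartheta)\|_{L^{2m}}^m\leq CN^{-\delta m}$ via the propagation-of-chaos estimate \eqref{eq: first last coupling}; and $(III)$ is treated by the triangle inequality
$$
|X_s^i-\overline X_s^{i,\vartheta,\vartheta_N}|\leq |X_s^i-\overline X_s^{i,\vartheta_N,\vartheta_N}|+|\overline X_s^{i,\vartheta_N,\vartheta_N}-\overline X_s^{i,\vartheta,\vartheta_N}|,
$$
whose two summands are bounded in $L^{2m}$-norm by $CN^{-\delta}$ via \eqref{eq: last coupling} and by $C|\vartheta_N-\vartheta|$ via \eqref{eq: coupling 2}.

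The core of the argument is $(IV)$: the random variables $Z^i=\int_0^t\phi(\vartheta;s,\overline X_s^{i,\vartheta,\vartheta_N},\mu_s^\vartheta)ds-\int_0^t\int_{\R^d}\phi(\vartheta;s,x,\mu_s^\vartheta)\mu_s^\vartheta(dx)ds$ are centred and iid under $\PP_{\vartheta_N}^N$, and the Lipschitz hypothesis together with Lemma \ref{lem : puissances limite} ensures that they are uniformly bounded in every $L^q$. A Marcinkiewicz--Zygmund (or Rosenthal) inequality then yields $\E[|N^{-1}\sum_{i=1}^N Z^i|^m]\leq CN^{-m/2}$ for $m\geq 2$, the case $m<2$ being handled by Jensen. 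This last step is precisely what forces the bound $\delta\leq 1/2$ in the statement, and $\delta$ can be taken as the minimum of the exponents appearing in $(II)$, $(III)$ and $(IV)$. I expect the main technical nuisance to lie not in any single bound but in the careful Jensen/Cauchy--Schwarz book-keeping needed to keep all estimates uniform in $s\in[0,T]$ and in the direction of approach $\vartheta_N\to\vartheta$; this is essentially automatic once the uniform moment and coupling estimates of Lemmas \ref{lem: moment bound}, \ref{lem : puissances limite} and \ref{lem: couplings} are in hand.
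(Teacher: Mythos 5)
Your proposal is correct in substance and follows essentially the same route as the paper: a telescoping decomposition, the couplings of Lemma \ref{lem: couplings}, the uniform moment bounds of Lemmas \ref{lem: moment bound} and \ref{lem : puissances limite}, and Rosenthal's inequality for the centred iid part. The only difference is book-keeping: the paper uses a three-term split, first coupling the pair $(X^i_s,\mu^{(N)}_s)$ to $(\overline X^{i,\vartheta_N}_s,\mu^{\vartheta_N}_s)$ \emph{at the parameter $\vartheta_N$} (one application of \eqref{eq: first last coupling}--\eqref{eq: last coupling}, giving $N^{-\delta m}$), then shifting $\vartheta_N\to\vartheta$ simultaneously in the parameter slot, the coupled process and the limit measure (one application of \eqref{eq: coupling 2} and Proposition \ref{prop: smoothness parameter McKean}, giving $|\vartheta_N-\vartheta|^m$), and finally applying Rosenthal; your version replaces one ingredient at a time, which costs you an extra term but changes nothing essential. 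One imprecision to fix in your term $(II)$: the estimate \eqref{eq: first last coupling} is stated with matching law and limit measure, i.e.\ under $\PP_{\vartheta_N}^N$ it controls $\mathcal W_1(\mu_s^{(N)},\mu_s^{\vartheta_N})$, \emph{not} $\mathcal W_1(\mu_s^{(N)},\mu_s^{\vartheta})$ as you claim. You need the triangle inequality $\mathcal W_1(\mu_s^{(N)},\mu_s^{\vartheta})\leq \mathcal W_1(\mu_s^{(N)},\mu_s^{\vartheta_N})+\mathcal W_1(\mu_s^{\vartheta_N},\mu_s^{\vartheta})$ together with the Lipschitz continuity of $\vartheta\mapsto\mu_s^{\vartheta}$ in $\mathcal W_1$ (Proposition \ref{prop: smoothness parameter McKean}); this produces an additional $|\vartheta_N-\vartheta|^m$ contribution, which is harmless since it is absorbed into the right-hand side of the lemma, but as written the bound $\|\mathcal W_1(\mu_s^{(N)},\mu_s^\vartheta)\|_{L^{2m}}\leq CN^{-\delta}$ does not follow from the reference you cite. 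With that patch, your argument is complete.
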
 
We apply Lemma \ref{lem: conv prob} to  $\phi(\vartheta;s,x,\nu)= \partial_{\vartheta_{q_1'}} (c^{-1/2}b)^j(\vartheta; s,x,\nu  )  \partial_{\vartheta_{q_2'}} (c^{-1/2}b)^j(\vartheta; s, x,\nu )$, thanks to Assumption \ref{hyp : reg sigma b}. The proof is given in Appendix \ref{app: LGN general}.\\

\noindent {\it Step 3.} {\it (Convergence of the remainder terms.)} We first prove
\begin{equation} \label{eq: first remainder}
u_N^\top (\mathbb I(\vartheta_N)^{-1/2})^\top \Delta_{N,\vartheta_N}(u_N)-u^\top  (\mathbb I(\vartheta_N)^{-1/2})^\top \Delta_{N,\vartheta_N}(0) \rightarrow 0
\end{equation}
in $\PP_{\vartheta_N}^N$-probability. Since $\mathbb I(\vartheta_N)^{-1/2}$ is well defined for large enough $N$ and converges to $\mathbb I(\vartheta)^{-1/2}$ and $u_N \rightarrow u$, it is sufficient to prove $\Delta_{N,\vartheta_N}(u_N)- \Delta_{N,\vartheta_N}(0) \rightarrow 0$ in $\PP_{\vartheta_N}^N$-probability. Introduce the process
$$G_{N}^{i, r}(\vartheta, u)_{t} = \int_0^1\big(\partial_{\vartheta_r}(c^{-1/2}b)(\vartheta+\lambda (N\mathbb I(\vartheta))^{-1/2}u;t,X_t^i,\mu_t^{(N)})-\partial_{\vartheta_r}(c^{-1/2}b)(\vartheta;t,X_t^i,\mu_t^{(N)})\big)d\lambda$$
for $1 \leq r \leq p$ and $t \in [0,T]$.  By It\^o's isometry
\begin{align*}
\E_{\PP_{\vartheta_N}^N}\big[|\Delta_{N,\vartheta_N}(u_N)- \Delta_{N,\vartheta_N}(0)|^2\big] & = \sum_{r = 1}^p\E_{\PP_{\vartheta_N}^N}\big[|N^{-1/2}\sum_{i = 1}^N \int_0^TG_{N}^{i, r}(\vartheta_N, u_N)_{t}^\top dB_t^{i,N,\vartheta_N}\big|^2 \big] \\
& = \sum_{r=1}^p N^{-1}\sum_{i = 1}^N \int_0^T\E_{\PP_{\vartheta_N}^N}\big[|G_{N}^{i, r}(\vartheta_N, u_N)_{t}|^2\big]dt.
\end{align*}
Moreover
\begin{align*}
|G_{N}^{i, r}(\vartheta_N, u_N)_{t}|^2  & \leq \sum_{j = 1}^d \sup_{\vartheta \in \Theta}\big|(\nabla_\vartheta \partial_{\vartheta_r}c^{-1/2}b)^j(\vartheta;t,X_t^i,\mu_t^{(N)}) \lambda (N\mathbb I(\vartheta_N))^{-1/2}u_N\big|^2 \\
& \leq CN^{-1}(1+|X_t^i|^{2r_1}+{\mathfrak m}_{r_2}(\mu_t^{(N)})^2),
\end{align*}
for large enough $N$, thanks to Assumption \ref{hyp : reg sigma b}. We conclude
$$\sup_{t \in [0,T]}\E_{\PP_{\vartheta_N}^N}\big[|G_{N}^{i, r}(\vartheta_N, u_N)_{t}|^2\big] \leq CN^{-1}$$
for large enough $N$ by Lemma \ref{lem: moment bound} and \eqref{eq: first remainder} follows.\\

The convergence of the second remainder term is a simple consequence of 
$\widetilde{\mathbb I}_{N, \vartheta_N}(u_N) \rightarrow \mathbb I(\vartheta)$
thanks to Lemma \ref{lem: conv prob} together with the continuity of $\vartheta \mapsto \mathbb  I(\vartheta)$ and Proposition \ref{prop: fisher cont}. The Proof of Theorem \ref{thm: LAN prop} is complete for the experiment $\mathcal E^N$.\\

\noindent {\it Step 4. (The case of the experiment $\mathcal G^{\otimes N}$.)} We now easily extend the previous results to the  experiment $\mathcal G^{\otimes N}$. Since it is a product of the same experiment $\mathcal G$, it is tempting to use classical criterions for IID data. However, from a simple glance at the structure of the previous computations, it suffices to retrace Steps 1 to 3 replacing $\mu_t^{(N)}$ by $\mu_t^{\vartheta_N}$ and $\PP_{\vartheta_N}^N$ by $\overline{\PP}_{\vartheta_N}^{\otimes N}$ that actually turn out to be simpler. We omit the details.


\subsection{Proof of Theorem \ref{thm: MLE prop}} \label{sec: proof of theorem mle}
We plan to apply the classical theory of Ibragimov-Hasminski,  and more specifically Theorem III.1.1 of \cite{ibragimov2013statistical}.We introduce the notation
$$\mathcal Z_N(\vartheta;u) = \frac{d\PP_{\vartheta+(N\mathbb I_\mathcal G(\vartheta))^{-1/2} u}^N}{d\PP_{\vartheta}^N}.$$
We first establish two key regularity properties of the likelihood process.\\

\noindent {\it Step 1.} (A regularity property for the likelihood process.) Here, we prove that for any $r \geq 2$
\begin{equation} \label{eq: smooth to be proved}
\E_{\PP_\vartheta^N}\big[\big|\mathcal Z_N(\vartheta;u)^{1/r}-\mathcal Z_N(\vartheta;v)^{1/r}\big|^r\big] \leq C(1+\kappa^\gamma) |u-v|^{\gamma'},
\end{equation}
for some positive $\gamma,\gamma'$, uniformly in $u,v$ such that $\mathcal Z_N(\vartheta;u)$ and $\mathcal Z_N(\vartheta;v)$ are well-defined and $|u|, |v|$ are bounded by $\kappa >0$. Pick any $r \geq 2$. By a first-order expansion
\begin{align}
\E_{\PP_\vartheta^N}\big[\big|\mathcal Z_N(\vartheta;u)^{1/r}-\mathcal Z_N(\vartheta;v)^{1/r}\big|^r\big]& \leq |u-v|^r\E_{\PP_\vartheta^N}\big[\big|\int_0^1\mathcal \nabla_u (\mathcal Z_N)^{1/r}(\vartheta;u+\lambda(v-u)) d\lambda\big|^r\big]  \nonumber\\
& \leq C|u-v|^r \int_0^1 \sum_{q = 1}^p\E_{\PP_\vartheta^N}\big[\big|\partial_{u_q}((\mathcal Z_N)^{1/r})(\vartheta;u+\lambda(v-u))\big|^r\big]d\lambda. \label{eq: decomp modulus}
\end{align}
Define, for $t \in [0,T]$, the random process 
$$\phi_t(\vartheta;X^i,\mu^{(N)}) = \exp\Big(\int_0^t (c^{-1}b)(\vartheta;s,X_s^i, \mu_s^{(N)})dX_s^i-\tfrac{1}{2}\int_0^t |(c^{-1/2}b)(\vartheta;s,X_s^i, \mu_s^{(N)})|^2ds\Big).$$
Since
$$\mathcal Z_N(\vartheta;u) = \prod_{i = 1}^N\frac{\phi_T^{1/r}(\vartheta  +(N\mathbb I_{\mathcal G}(\vartheta))^{-1/2}u, X^i, \mu^{(N)})}{\phi_T^{1/r}(\vartheta, X^i, \mu^{(N)})},$$
we have
\begin{align*}
\partial_{u_q}((\mathcal Z_N)^{1/r})(\vartheta; u) & = \sum_{i = 1}^N \frac{\partial_{u_q}(\phi_T^{1/r}(\vartheta+(N\mathbb I_{\mathcal G}(\vartheta))^{-1/2}u;X^i,\mu^{(N)}))}{\phi_T^{1/r}(\vartheta; X^i,\mu^{(N)})} \\
&\;\;\;\;\times \prod_{i'\neq i}  \frac{\phi_T^{1/r}(\vartheta+(N\mathbb I_{\mathcal G}(\vartheta))^{-1/2}u;X^{i'},\mu^{(N)})}{\phi_T^{1/r}(\vartheta; X^{i'},\mu^{(N)})}\\
& = \sum_{i = 1}^N \frac{\nabla_\vartheta(\phi_T^{1/r})(\vartheta+(N\mathbb I_{\mathcal G}(\vartheta))^{-1/2}u;X^i,\mu^{(N)})^\top \big((N\mathbb I_{\mathcal G}(\vartheta))^{-1/2}\big)_{\cdot q}}{\phi_T^{1/r}(\vartheta+(N\mathbb I_{\mathcal G}(\vartheta))^{-1/2}u; X^i,\mu^{(N)})} \\
&\;\;\;\;\times \prod_{i'=1}^N  \frac{\phi_T^{1/r}(\vartheta+(N\mathbb I_{\mathcal G}(\vartheta))^{-1/2}u;X^{i'},\mu^{(N)})}{\phi_T^{1/r}(\vartheta; X^{i'},\mu^{(N)})}\\
&=(\mathcal Z_N)^{1/r}(\vartheta; u)\big((N\mathbb I_{\mathcal G}(\vartheta))^{-1/2}\big)_{\cdot q} \times \\
&\;\;\;\;\;\sum_{i = 1}^N \nabla_\vartheta \big( \log(\phi_T^{1/r})\big)(\vartheta +(N\mathbb I_{\mathcal G}(\vartheta))^{-1/2}u, X^i, \mu^{(N)}).
\end{align*}
Interpreting $\mathcal Z_N(\vartheta; u)$ as a Radon-Nikodym derivative entails
\begin{align}
&\E_{\PP_\vartheta^N}\big[\big|\partial_{u_q}((\mathcal Z_N)^{1/r})(\vartheta;u)\big|^r\big] \nonumber \\
& =|\big(\mathbb I_{\mathcal G}(\vartheta))^{-1/2}\big)_{\cdot q}|^r \E_{\PP_{\vartheta+(N\mathbb I_{\mathcal G}(\vartheta))^{-1/2}u}^N}\big[\big|N^{-1/2}\sum_{i = 1}^N \nabla_\vartheta \big( \log(\phi_T^{1/r})\big)(\vartheta +(N\mathbb I_{\mathcal G}(\vartheta))^{-1/2}u)\big|^r\big] \label{eq: rep change of prob smooth}
\end{align}
by a change of probability between $\PP_{\vartheta}^N$ and $\PP_{\vartheta+(N\mathbb I_{\mathcal G}(\vartheta))^{-1/2}u}^N$.
Next, by definition of $\phi_t$, we have, for every $\vartheta' \in \Theta$
\begin{align*}
\partial_{\vartheta_{q'}}(\log (\phi_T^{1/r}))(\vartheta'; X^i ,\mu^{(N)}) & = \frac{1}{r}\int_0^T \partial_{\vartheta_{q'}}(c^{-1}b)(\vartheta'; t;X_t^i,\mu_t^{(N)})dX_t ^i \\
&\;\;\;-\frac{1}{2r}\int_0^T 2\partial_{\vartheta_{q'}}(c^{-1/2}b)(\vartheta'; t, X_t ^i,\mu_t^{(N)})^\top (c^{-1/2}b)(\vartheta'; t, X_t ^i,\mu_t^{(N)})dt\\
& = \frac{1}{r}\int_0^T \partial_{\vartheta_{q'}}(c^{-1/2}b)(\vartheta'; t;X_t^i,\mu_t^{(N)})dB_t^{i,N,\vartheta'}, 
\end{align*}
where the $(B^{i, N, \vartheta'}_t)_{t \in [0,T]} = (\int_0^t c^{-1/2}(s,X_s^i)(dX_s^i-b(\vartheta'; s,X_s^i,\mu_s^{(N)})ds)_{t \in [0,T]}$, $1 \leq i \leq N$ are independent Brownian motions on $\R^d$ under $\PP_{\vartheta'}^N$. Plugging-in this representation in \eqref{eq: rep change of prob smooth} at $\vartheta'=\vartheta +(N\mathbb I_{\mathcal G}(\vartheta))^{-1/2}u$, we infer
\begin{align*}
&\E_{\PP_\vartheta^N}\big[\big|\partial_{u_q}((\mathcal Z_N)^{1/r})(\vartheta;u)\big|^r\big] \\
& \leq C \E_{\PP_{\vartheta'}^N}\big[\big|N^{-1/2}\sum_{i = 1}^N \int_0^T \partial_{\vartheta_{q'}}(c^{-1/2}b)(\vartheta'; t;X_t^i,\mu_t^{(N)})dB_t^{i,N,\vartheta'}\big|^r\big] \\
& \leq C \E_{\PP_{\vartheta'}^N}\big[\big|N^{-1}\sum_{i = 1}^N \int_0^T |\partial_{\vartheta_{q'}}(c^{-1/2}b)(\vartheta'; t;X_t^i,\mu_t^{(N)})|^2dt\big|^{r/2}\big] \\
& \leq C N^{-1}\sum_{i = 1}^N \int_0^T(1+\E_{\PP_{\vartheta'}^N}\big[|X_t^i|^{r_1r}\big]+\mathfrak m_{r_2}(\mu_t^{(N)})^r) dt \\
& \leq C(1+\int_0^T\E_{\PP_{\vartheta'}^N}\big[|X_t^i|^{r\max(r_1,r_2)}\big]dt).
\end{align*} 
where we succesively used the Burckholder-Davis-Gundy, Assumption \ref{hyp : reg sigma b} and the fact that $r \geq 2$. Now, we claim that with $\vartheta' = \vartheta+(N\mathbb I_{\mathcal G}(\vartheta))^{-1/2}(u+\lambda (v-u))$, we have
\begin{equation} \label{eq: claim weird}
\int_0^T\E_{\PP_{\vartheta+(N\mathbb I_{\mathcal G}(\vartheta))^{-1/2}(u+\lambda (v-u))}^N}\big[|X_t^i|^{r\max(r_1,r_2)}\big]dt \leq C(1+\kappa^\gamma)
\end{equation}
for some $\gamma >0$, uniformly in $|u|, |v|$ bounded by $\kappa$ and where $C$ depends on $\Theta$ only. Indeed, keeping up with the abbreviation $\vartheta'$, we have
$$\E_{\PP_{\vartheta'}}\big[|X_t^i|^{r\max(r_1,r_2)}\big] \leq C\big(\E_{\PP_{\vartheta'}}\big[|X_t^i-X_t^{i,\vartheta}|^{r\max(r_1,r_2)}\big]+\E_{\PP_{\vartheta'}}\big[|X_t^{i,\vartheta}|^{r\max(r_1,r_2)}\big].$$
By \eqref{eq: coupling 1} of Lemma \ref{lem: couplings}
\begin{align*}
\E_{\PP_{\vartheta'}}\big[|X_t^i-X_t^{i,\vartheta}|^{r\max(r_1,r_2)}\big] & \leq C|\vartheta-\vartheta'|^{r\max(r_1,r_2)} \\
& = C|(N\mathbb I_{\mathcal G}(\vartheta))^{-1/2}(u+\lambda (v-u))|^{r\max(r_1,r_2)} \\
& \leq CN^{-r\max(r_1,r_2)/2}\kappa^{r\max(r_1,r_2)}
\end{align*}
and the second term that only depends on $\vartheta$ by coupling is uniformly bounded by Lemma \ref{lem: moment bound}. The estimate \eqref{eq: claim weird} follows.
Going back to \eqref{eq: decomp modulus}, we conclude
\begin{align*}
\E_{\PP_\vartheta^N}\big[\big|\mathcal Z_N(\vartheta;u)^{1/r}-\mathcal Z_N(\vartheta;u)^{1/r}\big|^r\big]& \leq C|u-v|^r(1+\kappa^{r\max(r_1,r_2)})
\end{align*}
and \eqref{eq: smooth to be proved} is established with $\gamma = r\max(r_1,r_2)$ and $\gamma'=r$.\\

\noindent {\it Step 2.} Here we prove a moment bound for the likelihood ratio process, namely, for every $r >0$
\begin{equation} \label{eq: moment bound likelihood}
\E_{\PP_\vartheta^N}\big[\mathcal Z_N(\vartheta;u)^{1/2}\big] \leq C|u|^{-r},
\end{equation} 
uniformly in $\vartheta \in \Theta$ and $u = (N\mathbb I_{\mathcal G}(\vartheta))^{1/2}(\vartheta'-\vartheta)$ with $\vartheta'\in \Theta$. Introducing for $t\in [0,T]$ the $\mathbb P_{\vartheta}^N$-martingale
$$\mathcal M_t^N(\vartheta;u) = \sum_{i = 1}^N\int_0^t \big((c^{-1/2}b)(\vartheta+(N\mathbb I_{\mathcal G}(\vartheta))^{-1/2}u;s,X_s^i;\mu_s^{(N)})-(c^{-1/2}b)(\vartheta;s,X_s^i,\mu_s^{(N)})\big)^\top dB_s^{i,N,\vartheta},$$
we have
$$\mathcal Z_N(\vartheta;u) = \exp\big(\mathcal M_T^N(\vartheta;u)-\tfrac{1}{2}\langle \mathcal M_\cdot^N(\vartheta; u)\rangle_T\big).$$
It follows that
\begin{align*}
\E_{\PP_\vartheta^N}\big[\mathcal Z_N(\vartheta;u)^{1/2}\big] &= \E_{\PP_\vartheta^N}\Big[\exp(-\tfrac{1}{2}\mathcal M_T^N(\vartheta;u)-\tfrac{3}{16}\langle \mathcal M_\cdot^N(\vartheta; u)\rangle_T) \exp\big(-\tfrac{1}{16}\langle \mathcal M_\cdot^N(\vartheta; u)\rangle_T\big)\Big] \\
& \leq \E_{\PP_\vartheta^N}\big[\exp(-\tfrac{3}{4} \mathcal M_T^N(\vartheta;u)-\tfrac{1}{2}\langle \tfrac{3}{4}\mathcal M_\cdot^N(\vartheta; u)\rangle_T)\big]^{\frac{2}{3}} \E_{\PP_\vartheta^N}\big[\exp\big(-\tfrac{3}{16}\langle \mathcal M_\cdot^N(\vartheta; u)\rangle_T\big)\big]^{\frac{1}{3}} \\
& \leq \E_{\PP_\vartheta^N}\big[\exp\big(-\tfrac{3}{16}\langle \mathcal M_\cdot^N(\vartheta; u)\rangle_T\big)\big]^{1/3},  
\end{align*}
thanks to H\"older's inequality and the martingale property of $(\tfrac{3}{4}\mathcal M_t^N(\vartheta;u))_{t \in [0,T]}$. With the help of the parametrisation $u = (N\mathbb I_{\mathcal G}(\vartheta))^{1/2}(\vartheta'-\vartheta)$, we rewrite $\langle \mathcal M_\cdot^N(\vartheta; u)\rangle_T $ as
\begin{align*}
 & \sum_{i = 1}^N\int_0^T \big|(c^{-1/2}b)(\vartheta';t,X_t^i,\mu_t^{(N)})-(c^{-1/2}b)(\vartheta;t;X_t^i,\mu_t^{(N)})\big|^2 dt \\
&= \sum_{i =1}^N\sum_{j = 1}^d \int_0^T \big(((N\mathbb I_{\mathcal G}(\vartheta))^{-1/2}u)^\top\int_0^1 \nabla_{\vartheta}(c^{-1/2}b)^j(\vartheta+\lambda(\vartheta'-\vartheta) ;t,X_t^i,\mu_t^{(N)})d\lambda \big)^2dt \\
&= u^\top \mathbb I_{\mathcal G}(\vartheta)^{-1/2}\Sigma^N([\vartheta, \vartheta'];u) \mathbb I_{\mathcal G}(\vartheta)^{-1/2}u,
\end{align*}
with 
\begin{align*}
\Sigma^N([\vartheta, \vartheta'];u) = N^{-1}\sum_{i = 1}^N \sum_{j = 1}^d \int_0^T &\nabla_\vartheta(c^{-1/2}b)^j([\vartheta,\vartheta'];t,X_t^i,\mu_t^{(N)})(\nabla_\vartheta(c^{-1/2}b)^j([\vartheta,\vartheta'];t,X_t^i,\mu_t^{(N)}))^\top dt
\end{align*}
which converges by Lemma \ref{lem: conv prob}
 to
 \begin{align*}\Sigma([\vartheta, \vartheta'];u) =  \sum_{j = 1}^d \int_0^T \int_{\R^d}& \nabla_\vartheta(c^{-1/2}b)^j([\vartheta, \vartheta'];t,x,\mu_t^\vartheta)(\nabla_\vartheta(c^{-1/2}b)^j([\vartheta, \vartheta'];t,x,\mu_t^{\vartheta}))^\top \mu_t(dx)dt
\end{align*}
in $\PP_\vartheta^N$-probability. Abbreviating further $\widetilde \Sigma^N(\vartheta;u) = \mathbb I_{\mathcal G}(\vartheta)^{-1/2}\Sigma^N([\vartheta, \vartheta'];u)\mathbb I_{\mathcal G}(\vartheta)^{-1/2}$ and $\widetilde \Sigma(\vartheta;u) = \mathbb I_{\mathcal G}(\vartheta)^{-1/2}\Sigma([\vartheta, \vartheta'];u)\mathbb I_{\mathcal G}(\vartheta)^{-1/2}$, we have
\begin{align}
\E_{\PP_\vartheta^N}\big[\mathcal Z_N(\vartheta;u)^{1/2}\big] & \leq \E_{\PP_\vartheta^N}\big[\exp\big(-\tfrac{3}{16} u^\top \widetilde\Sigma^N(\vartheta;u)u\big) \big]^{1/3} \nonumber\\
& \leq \PP_\vartheta^N\big(u^\top \widetilde\Sigma^N(\vartheta;u) u\leq \tfrac{1}{2} u^\top \widetilde\Sigma(\vartheta;u) u\big)^{1/3} + \exp\big(-\tfrac{1}{32}u^\top \widetilde\Sigma(\vartheta;u)u\big). \label{eq: cheby}
\end{align} 
The non-degeneracy assumption (recall Definition \ref{def: non deg}) ensures $u^\top \widetilde\Sigma(\vartheta;u)u \geq \mathfrak c|u|^2$ for some $\mathfrak c>0$ that does not depend on $\vartheta$ nor $u$ (but that depends on $\Theta$), hence the remainder term decays faster than any power of $|u|$. The first term in the right-hand side of \eqref{eq: cheby} is bounded above by
\begin{align*}
&\PP_\vartheta^N\big(\big|u^\top \big(\widetilde\Sigma^N(\vartheta;u)-\widetilde\Sigma(\vartheta;u)\big)u\big| \geq \tfrac{1}{2} u^\top \widetilde\Sigma(\vartheta;u) u\big)^{\tfrac{1}{3}} \\
& \leq \PP_\vartheta^N\Big(\big|u^\top \big(\widetilde\Sigma^N(\vartheta;u)-\widetilde\Sigma(\vartheta;u)\big)u\big| \geq \tfrac{1}{2}\mathfrak c|u|^2 \Big)^{\tfrac{1}{3}} \\
& \leq C|u|^{-\tfrac{2m}{3}} \E_{\PP_\vartheta^N}\big[\big|u^\top \big(\widetilde\Sigma^N(\vartheta;u)-\widetilde\Sigma(\vartheta;u)\big)u\big|^m\big]^{\tfrac{1}{3}}
\end{align*}
for every $m >0$ by Markov's inequality and where we used the non-degeneracy assumption again. For $1 \leq \ell, \ell' \leq p$, introduce
\begin{align*}
\phi_{\ell, \ell'}([\vartheta, \vartheta'];t,x,\nu)  & = (\partial_{\vartheta_\ell}(c^{-1/2}b)([\vartheta, \vartheta'];t,x,\nu))^\top \partial_{\vartheta_\ell}(c^{-1/2}b)([\vartheta, \vartheta'];t,x,\nu),
\end{align*}
so that $\big(\Sigma^N(\vartheta;u)-\Sigma(\vartheta;u)\big)_{\ell,\ell'}$ is simply
\begin{align*}
N^{-1}\sum_{i = 1}^N\int_0^T\phi_{\ell, \ell'}([\vartheta,\vartheta'];t,X_t^i, \mu_t^{(N)})dt-
\int_0^T\int_{\R^d}\phi_{\ell, \ell'}([\vartheta,\vartheta'];t,x, \mu_t^\vartheta)\mu_t(dx)dt.
\end{align*}
By Lemma \ref{lem: conv prob}, we derive
$$\E_{\PP_\vartheta^N}\big[\big|\big(\Sigma^N(\vartheta;u)-\Sigma(\vartheta;u)\big)_{\ell,\ell'}\big|^m\big] \leq CN^{-\delta m},$$
therefore
\begin{align*}
 \E_{\PP_\vartheta^N}\big[\big|u^\top \big(\widetilde\Sigma^N(\vartheta;u)-\widetilde\Sigma(\vartheta;u)\big)u\big|^m\big]^{1/3} \leq C|u|^{2m/3}N^{-\delta m/3},
\end{align*}
and finally
\begin{equation} \label{eq: last cheby}
\PP_\vartheta^N\big(\big|u^\top \big(\widetilde\Sigma^N(\vartheta;u)-\widetilde\Sigma(\vartheta;u)\big)u\big| \geq \tfrac{1}{2} u^\top \widetilde\Sigma(\vartheta;u) u\big)^{\tfrac{1}{3}} 
\leq CN^{-\delta m/3}
\end{equation}
Pick $r \geq 1$. Combining \eqref{eq: cheby} and \eqref{eq: last cheby}, we infer
\begin{align*}
|u|^r\E_{\PP_\vartheta^N}\big[\mathcal Z_N(\vartheta;u)^{1/2}\big] & \leq C|u|^rN^{-\delta m/3}+|u|^r\exp(-\tfrac{\mathfrak c}{32}|u|^2).
\end{align*}
For $u = (N\mathbb I_{\mathcal G}(\vartheta))^{1/2}(\vartheta'-\vartheta)$ with $\vartheta'\in \Theta$, we have $|u| \leq CN^{1/2}$. The first term in the right-hand side of the previous estimate is thus bounded as soon a $m \geq 3r/(2\delta)$. The second term is bounded uniformly in $|u|$. We thus have established \eqref{eq: moment bound likelihood}.\\

\noindent {\it Step 3.} We are now ready to apply Theorem III.1.1 of \cite{ibragimov2013statistical} and gather several properties of the maximum likelihood estimator. Note that the continuity of the likelihood function $\vartheta \mapsto \mathcal L^N(\vartheta; X^{(N)})$ and the compactness of $\Theta$ ensures that a solution $\widehat \vartheta_N^{\,\tt{mle}}$ to \eqref{eq: def emv} exists.\\

The uniform LAN condition given in the proof of Theorem \ref{thm: LAN prop} is the Condition N1 of Chapter III \cite{ibragimov2013statistical}. The non-degeneracy assumption (according to Definition \eqref{def: non deg}) is related to the uniform use of Condition N2 of \cite{ibragimov2013statistical}. Step 1 and Step 2 are respectively Condition N3 and N4 of \cite{ibragimov2013statistical}. We may thus apply Theorem III.1.1 of \cite{ibragimov2013statistical} and we readily obtain Statement (i) of Theorem \ref{thm: MLE prop}. Statement (ii) is a consequence of Corollary III.1.1 of \cite{ibragimov2013statistical} while Statement (iii) is a consequence of Theorem III.1.3 of \cite{ibragimov2013statistical}.\\

The proof of Theorem \ref{thm: MLE prop} is complete.

 \section{Remaining proofs} \label{sec: remaining proofs}

\subsection{Proof of Proposition \ref{prop: smoothness parameter McKean}} \label{app: proof of smooth}
Anticipating the proof of Lemma \ref{lem: couplings}, we prove a slightly stronger result, namely
\begin{equation} \label{eq: anticipate}
\big(\E_{\PP_\vartheta^N}\big[|\overline{X}_t^{i,\vartheta}-\overline{X}_{t}^{i,\vartheta'}|^{r}\big]\big)^{1/r} \leq C|\vartheta-\vartheta'|
\end{equation}
for $r \geq 1$. Indeed, Proposition \ref{prop: smoothness parameter McKean} is then a consequence of
$$
\mathcal W_1(\mu_t^{\vartheta}, \mu_t^{\vartheta'}) \leq \E_{\PP_\vartheta^N}\big[|\overline{X}_t^{i,\vartheta}-\overline{X}_{t}^{i,\vartheta'}|\big] \leq \big(\E_{\PP_\vartheta^N}\big[|\overline{X}_t^{i,\vartheta}-\overline{X}_{t}^{i,\vartheta'}|^{2r}\big]\big)^{1/2r}, 
$$
for any $r \geq 1$.
From $\overline{X}_0^{i,\vartheta}=\overline{X}_{0}^{i,\vartheta'}$, we have
$$\overline{X}_t^{i,\vartheta}-\overline{X}_t^{i,\vartheta'} = \int_0^t\big(b(\vartheta;s,\overline{X}_s^{i,\vartheta},\mu_s^\vartheta)-b(\vartheta';s,\overline{X}_s^{i,\vartheta'}, \mu_s^{\vartheta'})\big)ds+ \int_0^t\big(\sigma(s,\overline{X}_s^{i,\vartheta})-\sigma(s,\overline{X}_s^{i,\vartheta'})\big)dB_s^{i,N,\vartheta}.$$
Using Assumption \ref{reg sigma} and the Burckholder-Davis-Gundy inequality, we infer
\begin{align*}
\E_{\PP_{\vartheta}^N}\Big[\Big| \int_0^t\big(\sigma(s,\overline{X}_s^{i,\vartheta})-\sigma(s,\overline{X}_s^{i,\vartheta'})\big)dB_s^{i,N,\vartheta}\Big|^{2r}\Big] 
&\leq C\E_{\PP_{\vartheta}^N}\Big[\Big( \int_0^t |\overline{X}_s^{i,\vartheta}-\overline{X}_s^{i,\vartheta'}\big|^2ds\Big)^{r}\Big] \\
&\leq C\E_{\PP_{\vartheta}^N}\Big[\int_0^t |\overline{X}_s^{i,\vartheta}-\overline{X}_s^{i,\vartheta'}\big|^{2r}ds\Big] 
\end{align*}
since $r\geq 1$. Thanks to the smoothness properties of $b$ granted by Assumptions \ref{reg b} and \ref{hyp : reg sigma b} and incorporating the previous estimate, we obtain
\begin{align*}
&\E_{\PP_{\vartheta}^N}\big[|\overline{X}_t^{i,\vartheta}-\overline{X}_t^{i,\vartheta'}|^{2r}\big] \\
&\leq C\int_0^t \E_{\PP_\vartheta^N}\big[|\vartheta-\vartheta'|^{2r}(1+|\overline{X}_s^{i,\vartheta}|^{2r_1r}+\mathfrak m_{r_2}(\mu_s^\vartheta)^{2r})+|\overline{X}_s^{i,\vartheta}-\overline{X}_s^{i,\vartheta'}|^{2r}+\mathcal W_1(\mu_{s}^{\vartheta},\mu_s^{\vartheta'})^{2r}\big]ds \\
& \leq C\big(|\vartheta-\vartheta'|^{2r} + \int_0^t  \big(\E_{\PP_\vartheta^N}\big[|\overline{X}_s^{i,\vartheta}-\overline{X}_s^{i,\vartheta'}|^{2r}+\mathcal W_1(\mu_{s}^{\vartheta},\mu_s^{\vartheta'})^{2r}\big]ds \big)
\end{align*}
where we used that $ \E_{\PP_\vartheta^N}[|\overline{X}_t^{i,\vartheta}|^{r'}] = \mathfrak m_{r'}(\mu_t^\vartheta)$ is bounded uniformly in $t \in [0,T]$ and $\vartheta \in \Theta$  for all values of $r' \geq 1$ by Lemma  \ref{lem : puissances limite}. We obtain
\eqref{eq: anticipate} for $2r$ by Gr\"onwall's lemma, hence for every $r \geq 1$ by Cauchy-Schwarz's inequality. The proposition follows.

\subsection{Proof of Proposition \ref{prop: indisting}} \label{proof: prop: indisting}

By Girsanov's theorem,
\begin{align*}
\E_{\overline{\PP}_{\vartheta}^{\otimes N}}\Big[\log\frac{d\overline{\PP}_\vartheta^{\otimes N}}{d\PP_\vartheta^N}\Big] 
& = \tfrac{1}{2} \E_{\overline{\PP}_{\vartheta}^{\otimes N}}\Big[\sum_{i = 1}^N \int_0^T |b(\vartheta; t,X_t^i,\mu_t^{(N)})-b(\vartheta; t,X_t^i, \mu_t^\vartheta)|^2dt \\
& =  \tfrac{1}{2}\sum_{i = 1}^N\int_0^T \E_{\overline{\PP}_{\vartheta}^{\otimes N}}\big[\big|N^{-1}\sum_{j = 1}^N\big(\widetilde b(\vartheta; t,X_t^i,X_{t}^j)-\E_{\overline{\PP}_\vartheta}[\widetilde b(\vartheta; t,\zeta, X_{t}^j)]_{\zeta = X_t^i}\big)\big|^2\big]dt.
\end{align*}
We plan to use the following decomposition
\begin{align*}
& N^{-1}\sum_{j = 1}^N\widetilde b(\vartheta; t,X_t^i,X_{t}^j)-\E_{\overline{\PP}_\vartheta}\big[\widetilde b(\vartheta; t,\zeta, X_{t}^j)\big]_{\zeta = X_t^i} \\
& = N^{-1}\big(\widetilde b(\vartheta; t,X_t^i,X_{t}^i)-\E_{\overline{\PP}_\vartheta}[\widetilde b(\vartheta; t,\zeta, X_{t}^i)]_{\zeta = X_t^i}\big)\\
&+\tfrac{N-1}{N} (N-1)^{-1}\sum_{j=1, j \neq i}^N\big(\widetilde b(\vartheta; t,X_t^i,X_{t}^j)-\E_{\overline{\PP}_\vartheta^{\otimes 2}}[\widetilde b(\vartheta; t,X_t^i, X_{t}^j)\,|X_{t}^i]\big). 
\end{align*}
Using the elementary inequality $(a+b)^2 \leq (1+\rho)a^2+(1+\rho^{-1})b^2$ valid for every $\rho >0$, 
we obtain
\begin{align*}
&\E_{\overline{\PP}_{\vartheta}^{\otimes N}}\big[\big|N^{-1}\sum_{j = 1}^N\widetilde b(\vartheta; t,X_t^i,X_{t}^j)-\E_{\overline{\PP}_\vartheta}[\widetilde b(\vartheta; t,\zeta, X_{t}^j)]_{\zeta = X_t^i}\big|^2\big]  \\
& \leq (1+\rho^{-1})N^{-2}\E_{\overline{\PP}_\vartheta}\big[|\widetilde b(\vartheta; t,X_t^1,X_{t}^1)|^2\big]+(1+\rho)\frac{N-1}{N^2}\E_{\overline{\PP}_\vartheta^{\otimes 2}}\big[|\widetilde b(\vartheta; t,X_t^1,X_{t}^2)|^2\big],
\end{align*}
and therefore
$$\limsup_{N \rightarrow \infty} \sup_{\vartheta \in \Theta}
\E_{\overline{\PP}_{\vartheta}^{\otimes N}}\Big[\log\frac{d\overline{\PP}_\vartheta^{\otimes N}}{d\PP_\vartheta^N}\Big] 
\leq \frac{1+\rho}{2}\sup_{\vartheta \in \Theta} \int_0^T \int_{\R^d \times \R^d} |\widetilde b(\vartheta; t,x,y)|^2(\mu_t^\vartheta \otimes \mu_t^\vartheta)(dx, dy)dt.$$
By assumption, 
$$\int_{\R^d\times \R^d} |\widetilde b(\vartheta; t,x,y)|^2(\mu_t^\vartheta \otimes \mu_t^\vartheta)(dx, dy) \leq C\big(1+\sup_{t \in [0,T], \vartheta \in \Theta}(\mathfrak m_{r_1}(\mu_t^\vartheta)+\mathfrak m_{r_2}(\mu_t^\vartheta))\big)$$
which is finite by Lemma \ref{lem : puissances limite}. We thus obtain \eqref{eq: entropy control}. In order to obtain \eqref{eq: pinsker}, we simply apply Pinsker's inequality:
\begin{align*}
\limsup_{N \rightarrow \infty}\sup_{\vartheta \in \Theta}\|\PP_{\vartheta}^N-\overline{\PP}_\vartheta^{\otimes N}\|_{TV}^2 & \leq \tfrac{1}{2} \limsup_{N \rightarrow \infty}\sup_{\vartheta \in \Theta}\E_{\overline{\PP}_{\vartheta}^{\otimes N}}\Big[\log\frac{d\overline{\PP}_\vartheta^{\otimes N}}{d\PP_\vartheta^N}\Big] \\
& \leq  \frac{1+\rho}{4}\sup_{\vartheta \in \Theta} \int_0^T \int_{\R^d \times \R^d} |\widetilde b(\vartheta; t,x,y)|^2(\mu_t^\vartheta \otimes \mu_t^\vartheta)(dx, dy).
\end{align*}

The conclusion follows by taking $\rho$ sufficiently small.
\subsection{Proof of Proposition \ref{prop: fisher cont}} \label{app: proof prop: fisher cont}
Let 
$$\big(\phi_{\ell, \ell'}(\vartheta ; t,x,\nu)\big)_{1 \leq \ell, \ell' \leq p} = \big(\partial_{\vartheta_\ell}(c^{-1/2}b)(\vartheta;t,x,\mu^{\vartheta}_t)^\top \partial_{\vartheta_{\ell'}}(c^{-1/2}b)(\vartheta;t,x,\mu^{\vartheta}_t)\big)_{1 \leq \ell, \ell' \leq p}.$$
We have
\begin{align*}
\big(\mathbb I_{\mathcal G}(\vartheta)\big)_{\ell, \ell'}-\big(\mathbb I_{\mathcal G}(\vartheta')\big)_{\ell, \ell'} & = \int_0^T \big(\E_{\overline{\PP}_\vartheta}\big[\phi_{\ell, \ell'}(\vartheta ; t,X_t, \mu_t^\vartheta)\big]-\E_{\overline{\PP}_{\vartheta'}}\big[\phi_{\ell, \ell'}(\vartheta' ; t,X_t, \mu_t^{\vartheta'})\big]\big)dt \\
& = \int_0^T \big(\E_{\PP_\vartheta^N}\big[\phi_{\ell, \ell'}(\vartheta ; t, \overline{X}_t^{1,\vartheta}, \mu_t^\vartheta)-\phi_{\ell, \ell'}(\vartheta' ; t,\overline{X}_t^{1,\vartheta'}, \mu_t^{\vartheta'})\big]\big)dt. 
\end{align*}
Thanks to the smoothness properties of $b$ and $\sigma$ granted by Assumptions \ref{reg sigma}, \ref{reg b} and \ref{hyp : reg sigma b}, we have
\begin{align*}
&\big|\phi_{\ell, \ell'}(\vartheta ; t, \overline{X}_t^{1,\vartheta}, \mu_t^\vartheta)-\phi_{\ell, \ell'}(\vartheta' ; t,\overline{X}_t^{1,\vartheta'}, \mu_t^{\vartheta'})\big| \\
& \leq C\big(|\vartheta-\vartheta'|+|\overline{X}_t^{1,\vartheta}-\overline{X}_t^{1,\vartheta'}|+\mathcal W_1(\mu_t^{\vartheta'}, \mu_t^\vartheta)\big)\big(1+|\overline{X}_t^{1,\vartheta}|^\alpha+|\overline{X}_t^{1,\vartheta'}|^{\alpha}+\mathfrak m_\beta(\mu_t^\vartheta)+\mathfrak m_\beta(\mu_t^{\vartheta'})\big).
\end{align*}
We have $\E_{\PP_\vartheta^N}[|\overline{X}_t^{1,\vartheta}|^r]   = \mathfrak m_r(\mu_t^\vartheta)$, which is uniformly bounded in $t\in [0,T], \vartheta \in\Theta$ for every $r \geq 1$ by Lemma \ref{lem : puissances limite}. Likewise $\E_{\PP_\vartheta^N}[|\overline{X}_t^{1,\vartheta'}|^{r}] \leq C( \mathfrak m_r(\mu_t^\vartheta)+\E_{\PP_\vartheta^N}[|\overline{X}_t^{1,\vartheta}-\overline{X}_t^{1,\vartheta'}|^r])$, therefore, by Cauchy-Schwarz's inequality
\begin{align*}
&\big|\E_{\PP_\vartheta^N}\big[\phi_{\ell, \ell'}(\vartheta,t, \overline{X}_t^{1,\vartheta}, \mu_t^\vartheta)-\phi_{\ell, \ell'}(\vartheta',t,\overline{X}_t^{1,\vartheta'}, \mu_t^{\vartheta'})\big]\big| \\
& \leq C\big(|\vartheta-\vartheta'|+(\E_{\PP_\vartheta^N}[|\overline{X}_t^{1,\vartheta}-\overline{X}_t^{1,\vartheta'}|^2])^{1/2} +\mathcal W_1(\mu_t^\vartheta, \mu_t^{\vartheta'})\big) (1+(\E_{\PP_\vartheta^N}[|\overline{X}_t^{1,\vartheta}-\overline{X}_t^{1,\vartheta'}|^2])^{1/2})
\end{align*}
and the Lipschitz smoothness follows by applying \eqref{eq: coupling 2} of Lemma \ref{lem: couplings} and Proposition \ref{prop: smoothness parameter McKean}. The convergence $N^{-1} \mathbb I_{\mathcal E^N}(\vartheta) \rightarrow \mathbb I_{\mathcal G}(\vartheta)$ is a simple consequence of Lemma \ref{lem: conv prob}.

\subsection{Proof of Proposition \ref{prop: eq-identif-nondeg}}  \label{proof prop: eq-identif-nondeg}
Note that for every $1 \leq \ell \leq p$:
$$\partial_{\vartheta_\ell}\ell^N(\vartheta; X^N)  = G^N(X^{(N)})_\ell + 2\sum_{\ell' = 1}^p\vartheta_{\ell'} H^N(X^{(N)})_{\ell, \ell'}.$$
Let $\vartheta, \vartheta'$ in $\Theta_0$ be such that $\PP_{\vartheta}^N=\PP_{\vartheta'}^N$ for every $N \geq 1$. (This also implies $\overline{\PP}_\vartheta = \overline{\PP}_{\vartheta'}$ by Lemma \ref{lem: conv prob} for instance.) Using the symmetry of $H^N(X^{(N)})$, we note that
$$\vartheta^\top H^N(X^{(N)})\vartheta - (\vartheta')^\top H^N(X^{(N)})\vartheta' = \sum_{\ell = 1}^p(\vartheta_\ell-\vartheta'_{\ell})\sum_{\ell'=1}^pH^N(X^{(N)})_{\ell, \ell'}(\vartheta_{\ell'}+\vartheta'_{\ell'}).$$
It follows that
\begin{align*}
0  = \ell^N(\vartheta; X^{(N)})-\ell^N(\vartheta';X^{(N)})  
& = \sum_{\ell=1}^p(\vartheta_\ell-\vartheta'_{\ell})\Big(G^N(X^{(N)})_\ell+2\sum_{\ell'=1}^pH^N(X^{(N)})_{\ell, \ell'}\tfrac{\vartheta_{\ell'}+\vartheta'_{\ell'}}{2}\Big)\\
& = \sum_{\ell = 1}^p  \big(G^N(X^{(N)})_\ell+2\sum_{\ell' = 1}^p \tfrac{\vartheta_{\ell'}+\vartheta'_{\ell'}}{2} H^N(X^{(N)})_{\ell, \ell'}\big)(\vartheta_\ell-\vartheta_\ell') \\
& = \sum_{\ell = 1}^p \partial_{\vartheta_\ell}\ell^N(\vartheta^\star;X^{(N)})\xi_\ell = \nabla_\vartheta \ell^N(\vartheta^\star; X^{(N)})^\top\xi,
\end{align*}
with $\xi = \vartheta -\vartheta'$ and $\vartheta^\star = \tfrac{1}{2}(\vartheta+\vartheta') \in\Theta_0$ by the convexity of $\Theta_0$ and that does not depend on $X^{(N)}$. Assume now that $\vartheta \neq \vartheta'$. This implies that for some $\xi \neq 0$, we have
$$0 = (\nabla_\vartheta \ell^N(\vartheta^\star ; X^{(N)})^\top\xi)^\top\nabla_\vartheta \ell^N(\vartheta^\star ; X^{(N)})^\top\xi = \xi^\top \mathbb I_{\mathcal E^N}(\vartheta^\star) \xi.$$
Thus $\mathbb I_{\mathcal E^N}(\vartheta^\star)$ is degenerate for every $N \geq 1$. Letting $N \rightarrow \infty$ and applying Proposition \ref{prop: fisher cont},  we infer that $\mathbb I_{\mathcal G}(\vartheta^\star)$ is degenerate as well, a contradiction. The conclusion follows for $\mathcal G$ likewise.

\section{Appendix} \label{sec: appendix}
\subsection{Proof of Lemma \ref{lem: moment bound}} \label{app: proof of moment bound}
By Assumption \ref{reg b} we have $b_0 =\sup_{t \in [0,T]} | b(\vartheta_0 ; t, 0, \delta_0)| < \infty$ for some $\vartheta_0 \in \Theta$. Combined with Assumption \ref{hyp : reg sigma b}, since $\Theta$ is compact,  we infer
$$|b(\vartheta ; s,X_s^i,\mu_s^{(N)})| \leq C(1+|X_s^i|+N^{-1}\sum_{i = 1}^N|X_s^i|)$$
uniformly in $s \in [0,T]$ and $\vartheta \in \Theta$. For $M>0$, define $\tau_M = \inf\{s\geq 0,\max_{1 \leq i \leq N}|X_s^i| \geq M\} \wedge T$ and note that $\tau_M$ is a $(\mathcal F_t)$-stopping time. We have
\begin{align*}
|X_{t \wedge \tau_M}^i| & \leq |X_0^i|+\int_0^{t \wedge \tau_M} |b(\vartheta, s,X_s^i,\mu_s^{(N)})|ds+\big|\int_0^{t \wedge \tau_M} \sigma(s,X_s^i)dB_s^{i,N,\vartheta}\big| \\
& \leq |X_0^i|+C\int_0^{t \wedge \tau_M}(1+|X_s^i|+N^{-1}\sum_{i = 1}^N|X_s^i|)ds+\big|\int_0^{t \wedge \tau_M} \sigma(s,X_s^i)dB_s^{i,N,\vartheta}\big| \\
&  \leq |X_0^i|+C\int_0^{t}(1+|X_{s\wedge \tau_M}^i|+N^{-1}\sum_{i = 1}^N|X_{s\wedge \tau_M}^i|)ds+\sup_{0 \leq t \leq T}\big|\int_0^{t} \sigma(s,X_s^i)dB_s^{i,N,\vartheta}\big|
\end{align*}
Taking $\PP_{\vartheta}^N$-expectation of order $r \geq 1$, we obtain
\begin{align*}
\E_{\PP_{\vartheta}^N}\big[|X_{t \wedge \tau_M}^i|^r\big] & \leq C\big(1+ \E_{\PP_{\vartheta}^N}\big[|X_0^i|^r\big]+\int_0^{t}(1+\E_{\PP_{\vartheta}^N}\big[|X_{s\wedge \tau_M}^i|^r\big]ds\big),
\end{align*}
using Jensen's inequality, the exchangeability of $\PP_{\vartheta}^N$ and the Burckholder-Davis-Gundy inequality together with Assumption \ref{reg sigma} to obtain
$$\E\big[\sup_{0 \leq t \leq T}\big|\int_0^{t} \sigma(s,X_s^i)dB_s^{i,N,\vartheta}\big|^r\big] \leq C\E\big[\big(\int_0^{t} |\sigma(s,X_s^i)|^2s\big)^{r/2}\big] \leq C'.$$
By Gr\"onwall's lemma, we infer
$$\E_{\PP_{\vartheta}^N}\big[|X_{t \wedge \tau_M}^i|^r\big] \leq (\E_{\PP_{\vartheta}^N}\big[|X_0^i|^r\big]+C')\exp(Ct).$$
Letting $M \rightarrow \infty$, we conclude by Fatou's lemma.

\subsection{Proof of Lemma \ref{lem : puissances limite}} \label{app: proof of moment limit}
 It is a slight variation of the proof of Lemma \ref{lem: moment bound}. By Theorem 4.21 in \cite{carmona2018probabilistic}, we have $\E_{\overline{\PP}_\vartheta}[\sup_{0 \leq t \leq T}|X_t^i|^r] < \infty$ for every $r \geq 1$. (Their argument is developed for $r =2$ but the extension to any $r \geq 1$ is straightforward.) Therefore, only the uniformity in $\vartheta$ requires a proof.  From  
$$\E_{\PP_\vartheta^N}\big[|\overline{X}_t^{i,\vartheta}|^r\big] \leq C\big(\E_{\PP_\vartheta^N}\big[|X_t^i-\overline{X}_t^{i,\vartheta}|^r\big] +\E_{\PP_\vartheta^N}\big[|X_t^{i}|^r\big] \big),$$
and the fact that $\E_{\PP_\vartheta^N}\big[|\overline{X}_t^{i,\vartheta}|^r\big]  = \E_{\overline{\PP}_\vartheta}\big[|X_t^{i,\vartheta}|^r\big]$,  Lemma  \ref{lem : puissances limite} is now a simple consequence of  \eqref{eq: last coupling} in Lemma \ref{lem: couplings} together with Lemma \ref{lem: moment bound}.

\subsection{Proof of Lemma \ref{lem: couplings}} \label{app: lem couplings}
\subsubsection*{Proof of \eqref{eq: coupling 1}} The first inequality is obvious. Then, since $X_{0}^i= X_0^{i,\vartheta'}$, we have
$$X_t^i-X_t^{i,\vartheta'} = \int_0^t\big(b(\vartheta;s,X_s^i,\mu_s^{(N)})-b(\vartheta';s,X_s^{i,\vartheta'}, \mu_s^{(N), \vartheta'})\big)ds+ \int_0^t\big(\sigma(s,X_s^i)-\sigma(s,X_s^{i,\vartheta'})\big)dB_s^{i,N,\vartheta}.$$
Thanks to the smoothness properties of $b$ and $\sigma$ granted by Assumptions \ref{reg sigma}, \ref{reg b} and \ref{hyp : reg sigma b}, taking expectation to the power $r$ on both side and applying the Burckholder-Davis-Gundy inequality, we infer 
\begin{align*}
&\E_{\PP_{\vartheta}^N}\big[|X_t^i-X_t^{i,\vartheta'}|^r\big] \\
&\leq C\int_0^t \E_{\PP_\vartheta^N}\big[|\vartheta-\vartheta'|^r(1+|X_s^i|^{r_1}+\mathfrak m_{r_2}(\mu_s^{(N)}))^r+|X_s^{i}-X_s^{i,\vartheta'}|^r+\mathcal W_1(\mu_{s}^{(N), \vartheta'},\mu_s^{(N)})^r \big]ds \\
& \leq C\big(|\vartheta-\vartheta'|^r + \int_0^t  \big(\E_{\PP_\vartheta^N}\big[|X_s^{i}-X_s^{i,\vartheta'}|^r+\mathcal W_1(\mu_{s}^{(N), \vartheta'},\mu_s^{(N)})^r \big]\big)ds \big)
\end{align*}
where we used that $ \E_{\PP_\vartheta^N}\big[\mathfrak m_{r_2}(\mu_t^{(N)})^r\big] \leq \E_{\PP_\vartheta^N}[|X_t^{i}|^{r_2r}]$ which is bounded uniformly in $t \in [0,T]$ and $\vartheta \in \Theta$ by Lemma  \ref{lem: moment bound}. Also, using the first part of  \eqref{eq: coupling 1}, namely
$$ \E_{\PP_\vartheta^N}\big[\mathcal W_1(\mu_{s}^{(N), \vartheta'},\mu_s^{(N)})^r\big] \leq CN^{-1}\sum_{i = 1}^N \E_{\PP_\vartheta^N}\big[|X_s^{i}-X_s^{i,\vartheta'}|^r\big]$$
and taking averages over $i = 1,\ldots, N$ on both sides, we infer
\begin{align*}
&N^{-1}\sum_{i = 1}^N\E_{\PP_{\vartheta}^N}\big[|X_t^i-X_t^{i,\vartheta'}|^r\big] \leq  C\big(|\vartheta-\vartheta'|^r  +\int_0^t N^{-1}\sum_{i =1}^N\E_{\PP_{\vartheta}^N}\big[|X_s^i-X_s^{i,\vartheta'}|^r\big]ds \big).
\end{align*}
We obtain the second part of \eqref{eq: coupling 1} by Gr\"onwall's lemma.
\subsubsection*{Proof of \eqref{eq: coupling 2}} The first inequality is obvious. The second part is simply \eqref{eq: anticipate} from the proof of Proposition \ref{prop: smoothness parameter McKean}.

\subsubsection*{Proof of \eqref{eq: first last coupling} and \eqref{eq: last coupling}} By triangle inequality,
\begin{equation} \label{eq: triangle wasser}
\mathcal W_1(\mu_t^{(N)}, \mu_t^\vartheta)  \leq \mathcal W_1(\mu_t^{(N)}, \overline{\mu}_t^{(N),\vartheta})+\mathcal W_1(\overline{\mu}_t^{(N),\vartheta}, \mu_t^{\vartheta}) 
 \leq N^{-1}\sum_{i = 1}^N|X_{t}^i-\overline{X}_t^{i,\vartheta}|+\mathcal W_1(\overline{\mu}_t^{(N),\vartheta}, \mu_t^{\vartheta}). 
\end{equation}
By Theorem 2 of \cite{FournierGuillin}, we have $\sup_{t \in [0,T], \vartheta \in \Theta}\E_{\PP_{\vartheta}^N}\big[\mathcal W_1(\overline{\mu}_t^{(N),\vartheta}, \mu_t^{\vartheta})^r\big] \leq CN^{-\delta r}$ for every $r \geq 1$ and some $\delta >0$. The value of $\delta$ depends on the dimension $d$ of the state space. The uniformity in $(t,\vartheta)$ follows in particular from the uniform moment bounds of Lemma \ref{lem : puissances limite} (see the conditions of Theorem 2 of \cite{FournierGuillin}). Therefore \eqref{eq: first last coupling} is a consequence of \eqref{eq: last coupling}. 

In order to establish \eqref{eq: last coupling}, since $X_0^i=\overline{X}_0^{i,\vartheta}$, we write
$$X_t^i-\overline{X}_t^{i,\vartheta} = \int_0^t\big(b(\vartheta;X_s^i,\mu_s^{(N)})-b(\vartheta;s,\overline{X}_s^{i,\vartheta}, \mu_s^{ \vartheta})\big)ds+ \int_0^t\big(\sigma(s,X_s^i)-\sigma(s,\overline{X}_s^{i,\vartheta})\big)dB_s^{i,N,\vartheta}.$$
Taking expectation to the power $r$ on both side and applying the Burckholder-Davis-Gundy inequality, we infer
\begin{align*}
\E_{\PP_{\vartheta}^N}\big[|X_t^i-\overline{X}_t^{i,\vartheta}|^r\big] & \leq C\int_0^t\E_{\PP_\vartheta^N}\big[|X_t^i-\overline{X}_s^{i,\vartheta}|^r+\mathcal W_1(\mu_s^{(N)}, \mu_s^{\vartheta})^r\big]ds \\
&  \leq C\int_0^t\E_{\PP_\vartheta^N}\big[|X_t^i-\overline{X}_s^{i,\vartheta}|^r+\mathcal W_1(\overline{\mu}_s^{(N),\vartheta}, \mu_s^{\vartheta})^r\big]ds \\
 & \leq \varepsilon_N+C\int_0^t\E_{\PP_\vartheta^N}\big[|X_t^i-\overline{X}_s^{i,\vartheta}|^r\big]ds,
\end{align*}
arguing as in \eqref{eq: triangle wasser}, and where $\varepsilon_N = CT\sup_{t \in [0,T], \vartheta \in \Theta} \E_{\PP_\vartheta^N}\big[\mathcal W_1(\overline{\mu}_s^{(N),\vartheta}, \mu_s^{\vartheta})^r\big] \leq CN^{-\delta r}$. Note that the constants in each line are uniformly bounded in $\vartheta \in \Theta$. We obtain \eqref{eq: last coupling} by Gr\"onwall's lemma.
\subsection{Proof of Lemma \ref{lem: conv prob}} \label{app: LGN general}
We plan to use the following decomposition
$$N^{-1}\sum_{i = 1}^N\int_0^t \phi(\vartheta_N; s, X_s^i, \mu_s^{(N)})ds - \int_0^t \int_{\R^d}\phi(\vartheta; s,x,\mu_s^\vartheta)\mu_s^\vartheta(dx)ds  = I+II+III,$$
with
\begin{align*}
I &= N^{-1}\sum_{i = 1}^N\int_0^t \big(\phi(\vartheta_N; s, X_s^i, \mu_s^{(N)})- \phi(\vartheta_N; s, \overline{X}_s^{i,\vartheta_N}, \mu_s^{\vartheta_N})\big)ds\\
II&= N^{-1}\sum_{i = 1}^N\int_0^t \big(\phi(\vartheta_N; s, \overline{X}_s^{i,\vartheta_N}, \mu_s^{\vartheta_N})-\phi(\vartheta; s, \overline{X}_s^{i,\vartheta}, \mu_s^{\vartheta})\big)ds\\
III& = N^{-1}\sum_{i = 1}^N \int_0^t \phi(\vartheta; s, \overline{X}_s^{i,\vartheta}, \mu_s^{\vartheta})ds - \E_{\overline{\PP}_\vartheta}\big[\int_0^t\phi(\vartheta; s, \overline{X}_s^{i,\vartheta}, \mu_s^{\vartheta})ds\big].
\end{align*}
Thanks to the properties of $\phi$ the term $I$ is bounded by a constant times 
\begin{align*}
&N^{-1}\sum_{i = 1}^N \int_0^t \big(|X_s^i-\overline{X}_s^{i,\vartheta_N}|+\mathcal W_1(\mu_s^{(N)}, \mu_s^{\vartheta_N})\big)\big(1+|\overline{X}_s^{i,\vartheta_N}|^\alpha+|X_s^i|^\alpha+\mathfrak{m}_\beta(\mu_s^{\vartheta_N})+\mathfrak{m}_\beta(\mu_s^{(N)})\big)ds\\
& \leq \Big( N^{-1}\sum_{i = 1}^N \int_0^t \big(|X_s^i-\overline{X}_s^{i,\vartheta_N}|+\mathcal W_1(\mu_s^{(N)}, \mu_s^{\vartheta_N})\big)^2ds\Big)^{1/2}\\
& \;\;\;\times C\Big(N^{-1}\sum_{i = 1}^N\int_0^t \big(1+|\overline{X}_s^{i,\vartheta_N}|^{2\alpha}+|X_s^i|^{2\alpha}+\mathfrak{m}_{2\beta}(\mu_s^{\vartheta_N})+\mathfrak{m}_{2\beta}(\mu_s^{(N)})\big)ds\Big)^{1/2}. 
\end{align*}
by Cauchy-Schwarz's inequality. Applying Cauchy-Schwarz's inequality again together with Jensen's inequality, the  $\PP_{\vartheta_N}^N$-expectation to the power $m$ of the first term is then bounded by a constant times
 \begin{align*}
&\E_{\PP_{\vartheta_N}^N}\Big[\Big|N^{-1}\sum_{i = 1}^N\int_0^t\big(|X_s^i-\overline{X}_s^{i,\vartheta_N}|^2+\mathcal W_1(\mu_s^{(N)}, \mu_s^{\vartheta_N})^2\big)ds\Big|^m\Big]^{1/2}\\
&\;\;\times \Big(\int_0^t\big(1+\E_{\PP_{\vartheta_N}^N}\big[|\overline{X}_s^{i,\vartheta_N}|^{2m\alpha}+|X_s^i|^{2m\alpha}\big]+\mathfrak{m}_{2m\beta}(\mu_s^{\vartheta_N})+\mathfrak{m}_{2m\beta}(\mu_s^{(N)})\big)ds\Big)^{1/2}.
\end{align*}
The first term is bounded by a constant times $N^{-m\delta}$ by \eqref{eq: first last coupling} and \eqref{eq: last coupling} of Lemma \ref{lem: couplings}.
Also the  $\PP_{\vartheta_N}^N$-expectation of $|\overline{X}_s^{i,\vartheta_N}|^{2m\alpha}$, $|X_s^i|^{2m\alpha}$ and $\mathfrak{m}_{2m\beta}(\mu_s^{(N)})$ is uniformly bounded in $s \in [0,T]$ by Lemma \ref{lem: moment bound} and so is $\mathfrak{m}_{2m\beta}(\mu_s^{\vartheta_N})$ by Lemma \ref{lem : puissances limite}. We conclude
$$\E_{\PP_{\vartheta_N}^N}\big[\big| I\big|^m\big]  \leq CN^{-\delta m}.$$

The second term $II$ is bounded by a constant times
$$ N^{-1}\sum_{i = 1}^N\int_0^t \big( |\vartheta_N-\vartheta| +|\overline{X}_s^{i,\vartheta_N}-\overline{X}_s^{i,\vartheta}|+\mathcal W_1(\mu_s^{\vartheta_N},\mu_s^{\vartheta})\big)ds.$$
Taking  $\PP_{\vartheta_N}^N$-expectation to the power $m$ and applying successively the first and second part of
 \eqref{eq: coupling 2} in Lemma \ref{lem: couplings}, we obtain 
 $$\E_{\PP_{\vartheta_N}^N}\big[\big| II\big|^m\big]  \leq C|\vartheta-\vartheta_N|^m.$$
Finally, the third and last term converges to $0$ by the law of large numbers, applying for instance Rosenthal's inequality for a precise bound in $N$. We obtain
$$\E_{\PP_{\vartheta_N}^N}\big[\big| III \big|^m\big]  \leq CN^{-m/2}.$$
The proof of Lemma \ref{lem: conv prob} is complete.

\bibliographystyle{plain}       
\bibliography{DMH3.bib}

\end{document}